\newtheorem{theorem}{Theorem}[section]
\newtheorem{definition}[theorem]{Definition}
\newtheorem{lemma}[theorem]{Lemma}
\newtheorem{remark}[theorem]{Remark}
\newtheorem{problem}[theorem]{Problem}
\newtheorem{corollary}[theorem]{Corollary}
\newtheorem{proposition}[theorem]{Proposition}
\def\mb{\mathcal{B}}
\def\mx{\mathcal{X}}
\def\my{\mathcal{Y}}
\def\mz{\mathcal{Z}}
\newcommand{\conv}{{\rm conv}\hskip0.02cm}
\newcommand{\codim}{{\rm codim}\hskip0.02cm}
\newcommand{\dist}{{\rm dist}\hskip0.02cm}
\newcommand{\rk}{{\rm rank}\hskip0.02cm}
\newcommand{\ep}{\varepsilon}
\newcommand{\vora}{{\mathrm{VR}}}
\newcommand{\vv}{{\mathcal{V}}}
\newcommand{\R}{\mathbb R}
\newcommand{\N}{\mathbb N}
\newcommand{\affd}{\tilde{d}}
\newcommand{\ball}{\mathrm{B}}
\newcommand{\is}{\mathcal{S}}
\newcommand{\expect}{\mathbb{E}}
\newcommand{\prob}{\mathbb{P}}
\newcommand{\absconv}{\mathrm{absconv}}
\newcommand{\gau}{\mathbb{G}}
\newcommand{\vol}{\mathrm{vol}}
\def \dim{{\mathrm{dim}} \, }
\def \rank{{\mathrm{rank}} \, }
\def \ker{{\mathrm{ker}} \, }
\renewcommand{\span}{\mathrm{span}}
\def \vr{\varepsilon}
\def \ran{{\mathrm{ran}} \, }
\def \codim{{\mathrm{codim}} \,}
\def \diag{{\mathrm{diag}} \, }
\def \eqalign#1{\null\,\vcenter{\openup\jot 
   \ialign{\strut\hfil$\displaystyle{##}$&$
      \displaystyle{{}##}$\hfil \crcr#1\crcr}}\,}
\begin{document}

\title{\LARGE{Dependence of Kolmogorov widths on the ambient space}}

\author{T.~Oikhberg\\
Department of Mathematics\\
University of California-Irvine
Irvine, CA, 92697\\
and\\
Department of Mathematics\\
University of Illinois at Urbana-Champaign\\
Urbana, IL 61801\\
e-mail: {\tt toikhber@math.uci.edu}
\and\\
M.\,I.~Ostrovskii\\
Department of Mathematics and Computer Science\\
St. John's University\\
8000 Utopia Parkway\\
Queens, NY 11439\\
USA\\
e-mail: {\tt ostrovsm@stjohns.edu}}

\date{\today}
\maketitle

\begin{abstract}
We study the dependence of the Kolmogorov widths of a compact set
on the ambient Banach space.
\end{abstract}

\begin{large}
\tableofcontents

\section{Introduction}

Let $\mz $ be a subset of a Banach space $\mx $ and $x\in \mx $.
The {\it distance from $x$ to $\mz $} is defined as
\[E(x,\mz )=\inf\{||x-z||:~z\in \mz \}.\]
\begin{definition}
{\rm Let $K$ be a subset of a Banach space $\mx $, $n\in\mathbb{N}\cup\{0\}$.
The {\it Kolmogorov $n$-width} (or {\it $n$-th Kolmogorov number})
of $K$ is given by \[d_n(K, \mx)=\inf_{\mx _n}\sup_{x\in K}E(x,\mx _n),\]
where the infimum is over all subspaces $\mx_n \subset \mx$, of dimension
not exceeding $n$. We use the notation $d_n(K)$
if $\mx$ is clear from context.}
\end{definition}

This notion was introduced by Kolmogorov \cite{Kol36} in 1936. It
has been a subject of an extensive study and has found many
applications, both in Approximation Theory and in Functional
Analysis, see \cite{CS}, \cite{LGM96}, \cite{Pie80}, \cite{Pin85},
and \cite{Tik60}. In \cite{OS09} it was discovered that some
general asymptotic properties of Kolmogorov widths are useful in
the study of closures of sets of operators in the weak operator
topology. More results on asymptotic properties of Kolmogorov
widths were discovered in \cite{Ost10}. The purpose of this paper
is to continue analysis of asymptotic properties of widths.
\medskip

Our emphasis in this paper is on dependence of asymptotic
properties of widths on the ambient space. It is known for long
time (see \cite[\S 7]{Tik60}) that if $\my$ is a subspace of a
Banach space $\mx$ and $K\subset\my$, then it can happen that
$d_n(K,\my)>d_n(K,\mx)$. Furthermore, the quotient
$d_n(K,\my)/d_n(K,\mx)$ can be arbitrarily large. An example with
in a certain sense optimal order of this quotient was found in
\cite{Ost10}, where the following result was proved:

\begin{theorem}[\cite{Ost10}]\label{T:DepSpace}
For each $n$ the Banach space $\ell_1^{3n}$ contains a $2n$-dimensional subspace $Y_{2n}$ and
a compact $K_{2n}\subset \my_{2n}$ such that
$d_n(K_{2n},\ell_1^{3n})\le 1$ but $d_n(K_{2n},\my_{2n})\ge
c\sqrt{n}$ for some absolute constant $c>0$.
\end{theorem}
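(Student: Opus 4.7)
The plan is to let $\my_{2n}$ be a Kashin-type $2n$-dimensional subspace of $\ell_1^{3n}$ and to take $K_{2n}$ to be the set of Euclidean projections of the coordinate vectors $e_1,\dots,e_{3n}$ onto $\my_{2n}$. Specifically, by a Kashin-type random subspace argument (using that the codimension $n$ is a fixed proportion of $3n$), there exist an absolute constant $c>0$ and a $2n$-dimensional subspace $\my_{2n}\subset\ell_1^{3n}$ such that
$$\|y\|_1\ge c\sqrt{n}\,\|y\|_2\quad\text{for every }y\in\my_{2n},$$
where $\|\cdot\|_2$ is the standard Euclidean norm on $\R^{3n}$. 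Let $P\colon\R^{3n}\to\my_{2n}$ be the Euclidean orthogonal projection, set $Q=I-P$, and put $K_{2n}:=\{Pe_i:1\le i\le 3n\}$, a finite (hence compact) subset of $\my_{2n}$.

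For the upper bound $d_n(K_{2n},\ell_1^{3n})\le 1$, I would use $\mx_n:=\im Q\subset\ell_1^{3n}$, which is $n$-dimensional. Since $-Qe_i\in\mx_n$,
$$\|Pe_i-(-Qe_i)\|_1=\|(P+Q)e_i\|_1=\|e_i\|_1=1,$$
so every element of $K_{2n}$ lies within $\ell_1$-distance $1$ of $\mx_n$.

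For the lower bound, fix any $n$-dimensional $Z_n\subset\my_{2n}$, let $W_n=Z_n^{\perp}\cap\my_{2n}$ (also $n$-dimensional), and let $\pi$ denote Euclidean projection onto $W_n$. Because $W_n\subset\my_{2n}$, the map $\pi$ annihilates $\my_{2n}^{\perp}$, so $\pi e_i=\pi Pe_i$. The trace identity yields $\sum_{i=1}^{3n}\|\pi e_i\|_2^2=\tr\pi=n$, so some index $i_0$ satisfies $\|\pi e_{i_0}\|_2\ge 1/\sqrt{3}$. For any $z\in Z_n$ one has $\pi z=0$ and $Pe_{i_0}-z\in\my_{2n}$, hence combining the Kashin inequality with the contraction $\|\pi v\|_2\le\|v\|_2$ gives
$$\|Pe_{i_0}-z\|_1\ge c\sqrt{n}\,\|Pe_{i_0}-z\|_2\ge c\sqrt{n}\,\|\pi e_{i_0}\|_2\ge \frac{c}{\sqrt{3}}\sqrt{n}.$$
Taking the infimum over $z\in Z_n$ and then over $Z_n$ gives $d_n(K_{2n},\my_{2n})\ge (c/\sqrt{3})\sqrt{n}$.

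The only substantial input is the Kashin-type selection of $\my_{2n}$ (standard, but it is what produces the $\sqrt{n}$ gap); once that subspace is fixed, the rest of the argument is a short orthogonality and trace computation, and I do not foresee any serious further obstacle.
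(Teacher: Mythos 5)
Your proof is correct. The paper does not actually reproduce a proof of Theorem~\ref{T:DepSpace} (it is imported from \cite{Ost10}), but your argument --- a Kashin-type $2n$-dimensional subspace of $\ell_1^{3n}$, the Euclidean orthogonal projection $P$ with the upper bound coming from $\ker P$, and the lower bound from the trace identity $\sum_i\|\pi e_i\|_2^2=\dim W_n$ combined with $\|y\|_1\ge c\sqrt{n}\|y\|_2$ on $\my_{2n}$ --- is essentially the scheme used in \cite{Ost10} and sketched by the authors at the end of Section~\ref{s:vr} (the only cosmetic point being that the width definition permits $\dim Z_n<n$, in which case $\dim W_n>n$ and your trace bound only improves).
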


\begin{remark}\label{R:Optimal} The order in Theorem \ref{T:DepSpace} is optimal in the following sense:
Proposition \ref{P:nDim} implies that $d_n(K_{2n},\my_{2n})\le
\sqrt{2n}\,d_n(K_{2n},\ell_1^{3n})$.
\end{remark}

The paper is structured as follows: in Section \ref{s:abs wi}, we
introduce the notion of the absolute width $d_n^a(K)$ (Definition
\ref{D:AW}), and collect the necessary basic facts. In general,
$d_n^a(K) \leq d_n(K)$, but in some cases, we obtain the equality,
or at least proportionality, of the two quantities. In Section
\ref{s:affine}, we study affine widths. This allows us to
construct, in certain Banach spaces $X$, a compact convex set $K$
so that $d_1(K) > d_1^a(K)$. In Section \ref{s:compare} we note
some connections of Kolmogorov and absolute widths to other
$s$-sequences (such as the sequences of Gelfand numbers). This
provides us with some tools to be used later.\medskip

We then pass to the study of asymptotic behavior of Kolmogorov
numbers. In Section \ref{s:vr}, we exhibit a large class of Banach
spaces which contain a sequence of compact subsets $(K_n)$, so
that $\lim_n d_{k_n}(K_n)/d_{k_n}^a(K_n) = \infty$, for some
increasing sequence $(k_n)$. In Section \ref{s:ratio}, we sharpen
this result by showing that, if a space $\mx$ satisfies certain
conditions (for instance, if it is $K$-convex), then it contains a
compact $K$ with the property that $\limsup_n d_n(K)/d_n^a(K) =
\infty$. If, furthermore, $\mx$ contains $\ell_p$ ($1 < p <
\infty$) as a complemented subspace, then it contains a compact
subset $K$ so that $\liminf_n n^{-\sigma} d_n(K)/d_n^a(K) =
\infty$, for some $\sigma > 0$. In Section \ref{s:restrict}, we
examine compacts $K$ for which $d_n(K) = d_n^a(K)$, for any
ambient space. Finally, Section \ref{s:image} is devoted to
comparing the Kolmogorov widths of the sets $K$ and $u(K)$, where
$u$ is compact operator.\medskip

Throughout the paper we pose some interesting geometric
problems related to our study (Problems \ref{P:Nspace},
\ref{P:Linfty}, \ref{P:ProjRel}, \ref{P:ratio}, \ref{P:to infty},
\ref{P:RevKKM}, \ref{P:omalComp}). Problem \ref{P:ProjRel} could
be of interest not only in the context of the theory of
widths.\medskip

We use the basic Banach space theory and its standard notation. We
denote by $\ball(\mx)$ the closed unit ball of a space $\mx$.

\section{Absolute widths}\label{s:abs wi}

Dependence of the sequence $\{d_n(K)\}_{n=0}^\infty$ on the
ambient Banach space leads to the introduction of the following
definition.

\begin{definition}[\cite{Ism74}]\label{D:AW}
{\rm Let $K$ be a compact in a Banach space $\my$ and
$n\in\mathbb{N}$. The $n$-th {\it absolute width} (or {\it number}) $d^a_n(K)$ of
$K$ is defined by $d^a_n(K)=\inf_\mx d_n(K,\mx)$, where the $\inf$
is over all Banach spaces $\mx$ containing $\my$ as a subspace.}
\end{definition}

Absolute widths were studied in \cite{Ism74}, \cite{Koc90},
\cite{Oik95}, and \cite{Ost10}. Our main purpose in this paper is
to study the asymptotic behavior of the quotients
$d_n(K,\my)/d_n^a(K)$ under different assumptions. We start with
the following natural open problem: characterize Banach spaces $\my$ for which
$d_n(K,\my)=d_n^a(K)$ for all compacts $K\subset \my$.

We present a class of Banach spaces having this property. The
following definition goes back to \cite{LP68}: Let $1\le
\lambda<\infty$. A Banach space $\my$ is called an {\it
$\mathcal{L}_{\infty,\lambda}$-space} if for every
finite-dimensional subspace $S\subset \my$ there is a
finite-dimensional subspace $F\subset \my$ such that $S\subset F$
and $d(F,\ell_\infty^m)\le\lambda$, where $m=\dim F$. A Banach
space is called an {\it $\mathcal{L}_{\infty,\lambda+}$-space} if
it is a $\mathcal{L}_{\infty,\nu}$-space for each $\nu>\lambda$.
See \cite{Bou81} and \cite{LT73} for theory of
$\mathcal{L}_{p}$-spaces.

More generally, a Banach space $\mx$ is
called an {\it ${\mathcal{N}}_\lambda$-space} if, for every finite
dimensional subspace $E$ of $X$, there exists a finite dimensional
subspace $F$, satisfying $E \subset F \subset X$ and
$\lambda(F)\le\lambda$. Here, following \cite{Tom89}, we define
$\lambda(F)$ the ({\it absolute}) {\it projection constant} of $F$
as follows: for a superspace $G \supset F$, define the
{\it relative projection constant} $\lambda(F,G)$ as the infimum
of $\|P\|$, where $P$ is the projection from $G$ onto $F$. Then
$\lambda(F) = \sup \lambda(F,G)$, with the supremum taken over
all superspaces $G$.

A Banach space $X$ is
called an {\it ${\mathcal{N}}_{\lambda+}$-space} if it is a
${\mathcal{N}}_{\nu}$-space for each $\nu>\lambda$, and an {\it
${\mathcal{N}}$-space} if it is a ${\mathcal{N}}_\lambda$-space
for some $1\le\lambda<\infty$.

It is easy to see that each
$\mathcal{L}_{\infty,\lambda}$-space is an
$\mathcal{N}_\lambda$-space.
However, the converse is false, see e.g. \cite{Sza90}.
It is not known whether each $\mathcal{N}$-space is an
$\mathcal{L}_{\infty,\lambda}$-space for some $\lambda<\infty$.
This problem is a version of the well-known $P_\lambda$-problem
(see \cite[Problem 7, p.~323]{LP68}), which is still open.
However, it is known \cite{LL66} that, for a real Banach space $\mx$,
the following are equivalent: (i) $\mx$ is a $\mathcal{N}_{1+}$-space;
(i) $\mx$ is a $\mathcal{L}_{\infty, 1+}$-space; (iii) $\mx^* = L_1(\mu)$,
for some measure $\mu$.

\begin{proposition}\label{P:LinfAbs}
Let $K$ be a compact in an $\mathcal{N}_{\infty,\lambda+}$-space $\my$.
Then $d_n(K,\my)\le\lambda d_n^a(K)$ for all $n\in\mathbb{N}$.
\end{proposition}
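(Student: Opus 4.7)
The plan is to reduce the problem, via compactness of $K$, to a finite-dimensional situation inside $\my$, and then use the $\mathcal{N}_{\lambda+}$-property to pull back an approximating subspace from the ambient space via a projection of norm close to $\lambda$.

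More concretely, I would fix an arbitrary $\ep>0$ and proceed in four steps. First, by the definition of $d_n^a(K)$, choose a Banach space $\mx\supset \my$ and a subspace $\mx_n\subset\mx$ with $\dim\mx_n\le n$ such that $\sup_{x\in K}E(x,\mx_n)<d_n^a(K)+\ep$; for each point of a finite $\ep$-net $\{x_1,\dots,x_N\}$ of $K$ inside $\my$, pick $z_i\in\mx_n$ with $\|x_i-z_i\|<d_n^a(K)+\ep$. Second, let $F_0=\span\{x_1,\dots,x_N\}\subset \my$, and invoke the $\mathcal{N}_{\lambda+}$-property of $\my$ to produce a finite-dimensional $F$ with $F_0\subset F\subset \my$ and $\lambda(F)<\lambda+\ep$.

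Third, since $\lambda(F,\mx)\le\lambda(F)<\lambda+\ep$, there is a projection $P\colon \mx\to F$ with $\|P\|<\lambda+2\ep$. Set $\my_n=P(\mx_n)\subset F\subset \my$, so $\dim \my_n\le n$. Since $x_i\in F_0\subset F$ we have $Px_i=x_i$, hence
\[
E(x_i,\my_n)\le \|x_i-Pz_i\|=\|Px_i-Pz_i\|\le \|P\|\,\|x_i-z_i\|<(\lambda+2\ep)(d_n^a(K)+\ep).
\]
For an arbitrary $x\in K$, choose $x_i$ with $\|x-x_i\|<\ep$; the triangle inequality yields
\[
E(x,\my_n)<\ep+(\lambda+2\ep)(d_n^a(K)+\ep).
\]
Finally, taking the supremum over $x\in K$ and letting $\ep\to 0$ gives $d_n(K,\my)\le\lambda\,d_n^a(K)$, which is the desired conclusion.

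The main obstacle is really just the bookkeeping between the three spaces $\my_n\subset F\subset\my\subset\mx$: one must be careful that the projection $P$ is defined on all of $\mx$ (so that $Pz_i$ makes sense) while its range lies in $\my$ (so that $\my_n$ is an admissible approximating subspace for $d_n(K,\my)$), and that $F_0$ was chosen large enough to contain the net so that $P$ fixes the net points. Aside from this, the argument is a direct application of the definitions together with compactness of $K$.
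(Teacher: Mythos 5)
Your proposal is correct and follows essentially the same route as the paper's proof: pass to a near-optimal ambient pair $(\mx,\mx_n)$, take a finite net of $K$, enclose it in a finite-dimensional $F\subset\my$ with projection constant close to $\lambda$, project $\mx_n$ into $\my$ via $P$, and combine the net approximation with $\|P\|$. The only differences are cosmetic (additive versus multiplicative $\ep$ bookkeeping, and your explicit remark that $P$ fixes the net points, which the paper uses implicitly).
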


\begin{proof}
It suffices to show that for each $C>\lambda$ and $n \in \N$ we have
$d_n(K,\my)\le C d_n^a(K)$. Pick $\ep>0$ so that $(1+3 \vr + \vr^2)\lambda < C$.
By the definition of $d_n^a$ there exists a Banach space
$\mx\supset \my$ and an $n$-dimensional subspace $\mx_n\subset\mx$
such that $E(x,\mx_n)\le (1+\ep)d_n^a(K)$ for any $x\in K$. Let
$\{k_i\}\subset K$ be an $\ep\lambda d_n^a(K)$-net in $K$.
Find a finite dimensional subspace $F \subset \my$, containing
$\{k_i\}$, so that there exists a projection $P:\mx\to F$ satisfying
$\|P\|\le \lambda(1+\ep)$. Let $\my_n=P(\mx_n)$. Then
$E(k_i,\my_n)=E(Pk_i,P\mx_n)\le (1+\ep)\lambda E(k_i,\mx_n)\le
(1+\ep)^2\lambda d_n^a(K)$. Let $k\in K$ and $k_i$ be such that
$||k-k_i||\le \ep\lambda d_n^a(K)$, we have
$$
E(k,\my_n)\le
||k-k_i||+E(k_i,\my_n)\le ((1+\ep)^2+\ep)\lambda d_n^a(K) \leq
C d_n^a(K) .
\, \, \, \qedhere
$$
\end{proof}

\begin{corollary}\label{C:LinfAbs}
Let $K$ be a compact in an $\mathcal{L}_{\infty,1+}$-space $\my$.
Then $d_n(K,\my)=d_n^a(K)$ for all $n\in\mathbb{N}$.
\end{corollary}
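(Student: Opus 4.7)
The plan is to reduce the corollary to Proposition~\ref{P:LinfAbs} applied with the limiting parameter $\lambda = 1$. The key observation is that the hypothesis ``$\my$ is an $\mathcal{L}_{\infty,1+}$-space'' means $\my$ is an $\mathcal{L}_{\infty,\nu}$-space for every $\nu > 1$. Since the paper already notes that every $\mathcal{L}_{\infty,\nu}$-space is an $\mathcal{N}_\nu$-space, this forces $\my$ to be an $\mathcal{N}_\nu$-space for each $\nu > 1$, i.e., an $\mathcal{N}_{1+}$-space.

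The inequality $d_n^a(K)\le d_n(K,\my)$ is immediate from the definition of the absolute width, since $\my$ is itself one of the admissible ambient spaces $\mx \supset \my$ in the infimum defining $d_n^a$. For the reverse direction, I would apply Proposition~\ref{P:LinfAbs} with the parameter $\lambda$ replaced by any $\nu > 1$: this gives $d_n(K,\my)\le \nu\, d_n^a(K)$ for every $\nu > 1$. Letting $\nu\downarrow 1$ yields $d_n(K,\my)\le d_n^a(K)$, and combining with the trivial direction gives equality.

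The only thing to double-check is that Proposition~\ref{P:LinfAbs} is indeed applicable to every $\mathcal{N}_{\nu+}$-space with $\nu>1$ (so that we may let $\nu$ approach $1$), which is clear from its statement and proof: the argument there uses only the existence, for each finite-dimensional $F\subset\my$ and each $\ep>0$, of a projection $P\colon \mx\to F$ of norm at most $\lambda(1+\ep)$, and this existence is provided directly by the $\mathcal{N}_{\nu+}$-hypothesis together with the definition of the absolute projection constant. There is no real obstacle here — the corollary is essentially a passage-to-the-limit consequence of the proposition, so the ``hard part'' is only the bookkeeping of which class ($\mathcal{L}_{\infty,1+}$ versus $\mathcal{N}_{1+}$) is being used at each step.
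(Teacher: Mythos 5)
Your argument is correct and is essentially the paper's intended one: the corollary is stated without separate proof precisely because an $\mathcal{L}_{\infty,1+}$-space is an $\mathcal{N}_{1+}$-space, so Proposition~\ref{P:LinfAbs} applies directly with $\lambda=1$ to give $d_n(K,\my)\le d_n^a(K)$, and the reverse inequality is trivial from the definition of $d_n^a$. Your detour through $\nu>1$ followed by $\nu\downarrow 1$ is harmless but redundant, since the limiting step is already built into the proof of the proposition (which establishes $d_n(K,\my)\le C\,d_n^a(K)$ for every $C>\lambda$).
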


In this connection it is worth mentioning that all spaces of
continuous functions on compacts with their $\sup$-norms are
$\mathcal{L}_{\infty,1+}$-spaces, see \cite{LT73}.

\begin{remark} Corollary \ref{C:LinfAbs} can be regarded as a generalization of the following result of
Ismagilov \cite[Corollary of Theorem 2]{Ism74}: Let $K$ be a
compact in a Banach space $\mx$ and $\mb$ be the Banach space of all
bounded functions on $\ball(\mx^*)$ (the unit ball of $\mx^*$) with the
$\sup$-norm. Let $i$ be the natural isometric embedding of $\mx$
into $\mb$. Then $d_n^a(K)=d(i(K),\mb)$. To get this result from
Corollary \ref{C:LinfAbs} it suffices to combine the corollary
with the well-known fact that $\mb$ is an $\mathcal{L}_{\infty,
1+}$-space (see \cite{LT73}).
\end{remark}

Do Proposition \ref{P:LinfAbs} and Corollary \ref{C:LinfAbs} characterize
the ${\mathcal{N}}$ spaces and ${\mathcal{L}}_{\infty,1+}$ spaces, respectively?

\begin{problem}\label{P:Nspace}
Let a Banach space $\my$ be such that for some
$1\le\lambda<\infty$ the condition $d_n(K,\my)\le\lambda d_n^a(K)$
holds for each compact $K\subset \my$ and each $n\in\mathbb{N}$.
Does it follow that $\my$ is an $\mathcal{N}$-space?
\end{problem}

\begin{problem}\label{P:Linfty} Let
a Banach space $\my$ be such that $d_n^a(K)=d_n(K,\my)$ for each
compact $K\subset \my$ and each $n\in\mathbb{N}$. Does it follow
that $\my$ is an $\mathcal{L}_{\infty,1+}$-space?
\end{problem}

Approaches to these questions may rely on
Zippin's solution \cite{Zip81a,Zip81b,Zip84} to the
close-to-isometric version of the $P_\lambda$-problem. (See
\cite{Tom89} for a presentation of this result of Zippin and
\cite{Zip00} for further results related to the
$P_\lambda$-problem.)

Corollary \ref{C:LinfAbs} can be used to estimate from above the
quotient $d_k(K)/d_k^a(K)$ for an $n$-dimensional compact $K$.

\begin{proposition}\label{P:nDim} Let $K$ be an $n$-dimensional compact in a Banach space $\my$.
Then $d_k(K,\my)\le\sqrt{n}d_k^a(K)$ for all $k\in\mathbb{N}$.
\end{proposition}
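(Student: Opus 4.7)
The plan is to combine Corollary~\ref{C:LinfAbs} with the Kadets--Snobar theorem, which states that every $n$-dimensional subspace of a Banach space admits a projection onto it, from the ambient space, of norm at most $\sqrt{n}$.

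First, I would set $F=\span K\subset\my$, so $\dim F\le n$, and isometrically embed $F$ into $C(\ball(F^*))$, the space of continuous functions on the weak-$*$ compact $\ball(F^*)$, via the canonical evaluation map. Since any $C(K)$-space is $\mathcal{L}_{\infty,1+}$, Corollary~\ref{C:LinfAbs} applies to the compact $K$ inside this ambient space and yields $d_k(K,C(\ball(F^*)))=d_k^a(K)$. Consequently, for any $\ep>0$ I can pick a $k$-dimensional subspace $\me\subset C(\ball(F^*))$ with $\sup_{x\in K}E(x,\me)\le(1+\ep)\,d_k^a(K)$.

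Second, I would apply the Kadets--Snobar theorem to the inclusion $F\subset C(\ball(F^*))$ to obtain a projection $P\colon C(\ball(F^*))\to F$ with $\|P\|\le\sqrt{n}$. Setting $\my_k=P(\me)$ gives an at-most-$k$-dimensional subspace of $F\subset\my$, and since $Px=x$ for every $x\in K\subset F$,
\[
E(x,\my_k)\;\le\;\|P\|\cdot E(x,\me)\;\le\;\sqrt{n}\,(1+\ep)\,d_k^a(K).
\]
Letting $\ep\to 0$ produces the desired bound $d_k(K,\my)\le\sqrt{n}\,d_k^a(K)$.

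The argument is essentially routine once Kadets--Snobar is invoked, so no step presents a real obstacle. The one subtlety worth flagging is that the approximating subspace $\my_k$ is obtained by pushing $\me$ back to $F$, which guarantees $\my_k\subset F\subset\my$; this is exactly what makes $\my_k$ a legitimate competitor in the infimum defining $d_k(K,\my)$, and it is the reason the proof works within $\my$ rather than in the larger auxiliary space $C(\ball(F^*))$.
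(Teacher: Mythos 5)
Your proof is correct and follows essentially the same route as the paper's: approximate $K$ in an $\mathcal{L}_{\infty,1+}$ superspace via Corollary~\ref{C:LinfAbs}, then push the approximating subspace into $\span[K]$ by a Kadets--Snobar projection of norm $\le\sqrt{n}$. The only (harmless) deviation is that you embed $F=\span[K]$ into $C(\ball(F^*))$ rather than embedding all of $\my$ into $\ell_\infty(I)$, which tacitly uses that $d_k^a(K)$ does not depend on whether the superspaces in its definition contain $\my$ or merely $F$ --- a standard fact (via amalgamation, or Ismagilov's theorem quoted after Corollary~\ref{C:LinfAbs}) that the paper's choice of ambient space avoids having to invoke.
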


\begin{proof} We may assume that $\my$ is separable and so we may consider
$\my$ as a subspace of $\ell_\infty(I)$.
It is easy to see that $\ell_\infty(I)$ is an
$\mathcal{L}_{\infty,1+}$-space. By Corollary \ref{C:LinfAbs},
$d_n^a(K) = d_n(K, \ell_\infty(I))$.
\medskip

The inequality $d_k(K,\my)\le\sqrt{n}d_k^a(K)$ is trivially true
for $k\ge n$. So let $k\in\{0,\dots,n-1\}$. Consider an arbitrary
$\ep>0$. Let $\mx_k$ be a $k$-dimensional subspace of
$\ell_\infty$ such that $E(x,\mx_k)\le (1+\ep)d_k^a(K)$ for all
$x\in K$. Let $P:\ell_\infty(I) \to\span[K]$ be a linear projection
with norm $\le\sqrt{n}$, existing by the Kadets-Snobar theorem
\cite{KS71} and let $\my_k=P\mx_k$. Then for all $x\in K$ we have
$E(x,\my_k)=E(Px,P\mx_k)\le ||P||E(x,\mx_k)\le
\sqrt{n}(1+\ep)d_k^a(K)$.
\end{proof}

As we already mentioned in Remark \ref{R:Optimal}, the estimate of
Proposition \ref{P:nDim} is optimal up to a multiplicative
constant.\medskip

As a step towards the solution of Problems \ref{P:Linfty} and
\ref{P:Nspace} we find a wide class of spaces $X$ for which the
quotients $d_n(K,X)/d_n^a(K)$ can be arbitrarily large. This is
the subject of Sections \ref{s:vr} and \ref{s:ratio}.

\section{Affine widths, geometry, and injectivity}\label{s:affine}

While dealing with arbitrary convex (not necessarily centrally symmetric) sets,
it is convenient to use affine subspaces for approximation (see e.g.~\cite{AO}).
\begin{definition}\label{D:aff_widths}
{\rm Let $K$ be a compact in a Banach space $Y$ and $n\in\N \cup
\{0\}$. The $n$-th {\it affine width} $\affd_n(K)$ of $K$ is set
to be $\inf_Z \sup_{x \in K} E(x,Z)$, where the infimum runs over
all affine subspaces of $Z \subset Y$ of dimension not exceeding
$n$. The $n$-th {\it absolute affine width} $\affd^a_n(K)$ of $K$
is defined by $\affd^a_n(K)=\inf_X \affd_n(K,X)$, where the $\inf$
is over all Banach spaces $X$ containing $Y$ as a subspace.}
\end{definition}
It is clear that $\affd^a_n(K) \leq \affd_n(K,X)$, and the equality
is attained if $X$ is $1$-injective. Moreover (see \cite[Section 6.2]{AO}),
$$
d_n(K) \geq \affd_n(K) \geq d_{n+1}(K \cup (-K)) .
$$
Furthermore, $d_n(K) = \affd_n(K)$ if $K$ is centrally symmetric.
The affine widths $\affd_0$ have been considered previously.
To summarize them, recall a few definitions.
\begin{definition}\label{D:Jung}
{\rm
For a bounded subset $K$ of a Banach space $\my$, define its {\it diameter}
$D(K)$ and {\it radius} $R(K)$ by setting
$$
D(K) = \sup_{a,b \in K} \|a-b\| , \, \, \,
R(K) = \inf_{y \in \my} \sup_{a \in K} \|a-y\|
$$
(that is, $R(K)$ is the infimum of the radii of balls containing $K$).
The {\it Jung constant} $J(\my)$ of a Banach space $\my$ is defined as the
supremum (over bounded sets $K \subset \my$) of $2 R(K)/D(K)$.
Note that, in our notation, $R(K) = \tilde{d}_0(K)$
}
\end{definition}

Clearly, $2 \geq J(\my) \geq 1$.
The spaces $\my$ with $J(\my) = 1$ were described in \cite{Dav}.

\begin{theorem}[\cite{Dav}]\label{T:injective}
For a real Banach space $\my$, the following are equivalent:
\begin{enumerate}
\item
For any compact $K \subset \my$, there exists $y \in \my$ such that
$K \subset \ball(y, D(K)/2)$.
\item
$Y$ is $1$-injective.
\item
$J(\my) = 1$.
\end{enumerate}
\end{theorem}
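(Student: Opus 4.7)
The plan is to prove the equivalences cyclically, $(2) \Rightarrow (1) \Rightarrow (3) \Rightarrow (2)$, with the classical Nachbin--Kelley characterization of $1$-injective spaces serving as the main tool: $\my$ is $1$-injective if and only if it has the \emph{binary intersection property for closed balls}, meaning every family of closed balls with pairwise nonempty intersections has a nonempty common intersection.

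For $(2) \Rightarrow (1)$: let $K \subset \my$ be compact. For any $a, b \in K$ the midpoint $(a+b)/2$ has distance at most $\|a-b\|/2 \le D(K)/2$ from each endpoint, so the family $\{\ball(a, D(K)/2) : a \in K\}$ is pairwise intersecting. The binary intersection property yields a common point $y$ with $K \subset \ball(y, D(K)/2)$.

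For $(1) \Rightarrow (3)$: I must show $R(K) \le D(K)/2$ for every bounded $K$. Since closures preserve $D$ and $R$, I assume $K$ is closed. For each finite $F \subset K$, condition (1) furnishes $y_F$ with $\max_{a \in F} \|y_F - a\| \le D(F)/2 \le D(K)/2$. If $K$ is totally bounded (e.g., compact), choosing $F$ to be a finite $\vr$-net gives $\sup_{a \in K} \|y_F - a\| \le D(K)/2 + \vr$ and hence $R(K) \le D(K)/2$. For a general bounded $K$ one reduces to this case by passing to a countable dense subset of $K$ and localizing the argument to a separable subspace.

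The implication $(3) \Rightarrow (2)$ is the deep direction and is where I expect the main obstacle to lie. Again by Nachbin--Kelley, it suffices to derive the full binary intersection property from the equal-radii Jung hypothesis (3). The equal-radii case $r_i \equiv r$ is tautological: the $a_i$ have diameter at most $2r$, so (3) applied to $K = \{a_i\}$ produces a common center $y$ with $\|y-a_i\| \le r$. The variable-radii case must be reduced to this. My plan is to proceed by a Zorn's lemma / transfinite induction argument on partially admissible common centers, at each stage applying (3) to a compact set of candidate centers whose diameter is controlled by the pairwise-intersection relation $\|a_i-a_j\|\le r_i+r_j$; alternatively, one may try to embed the variable-radii configuration into an auxiliary superspace where it becomes equal-radii. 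Ensuring that the reduction actually captures a common point of the original variable-radii family---rather than an enlarged equal-radii surrogate---is the technical crux, and is where Davis's original argument does its real work.
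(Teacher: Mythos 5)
The paper does not actually prove this theorem --- it cites \cite{Dav} (and \cite{Nach} for the equivalence of (1) and (2)) --- so there is no in-paper argument to compare yours against; your proposal must stand on its own, and it does not. The directions you do carry out are fine: $(2)\Rightarrow(1)$ via the Nachbin--Kelley binary intersection property is correct, and $(1)\Rightarrow(3)$ is correct for totally bounded sets. But the entire substance of the theorem is the implication $(3)\Rightarrow(2)$ (equivalently $(1)\Rightarrow(2)$): passing from the equal-radii statement ``pairwise intersecting balls of a \emph{common} radius have a common point'' to the full binary intersection property for balls of \emph{arbitrary} radii. Your text does not prove this. It names two candidate strategies (a Zorn's lemma scheme on partially admissible centers, or an embedding into an auxiliary superspace where the configuration becomes equal-radii) and then explicitly concedes that making either work ``is the technical crux, and is where Davis's original argument does its real work.'' That concession is accurate --- naively inflating all radii to the maximum, or shrinking them to the minimum, destroys the pairwise-intersection hypothesis, so the reduction is genuinely nontrivial --- but it means the only hard implication is left unproved. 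As written, the argument is incomplete.

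There is also a secondary gap in $(1)\Rightarrow(3)$: the Jung constant is a supremum over \emph{bounded} sets, and your $\varepsilon$-net argument only treats totally bounded $K$. Passing to a countable dense subset of a general bounded $K$ does not repair this --- such a subset need not be totally bounded (take the unit ball of an infinite-dimensional space), and the centers $y_F$ produced from finite subsets $F$ need not converge, or even have a cluster point, in a general Banach space. The clean way to obtain (3) for all bounded sets is to route through (2) first and apply the binary intersection property to the family $\{\ball(a,D(K)/2): a\in K\}$; but that again presupposes the hard implication you have not supplied.
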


The equivalence (1) $\Leftrightarrow$ (2) in the above theorem precedes
\cite{Dav} -- it is due to \cite{Nach}.
For certain Banach spaces, the Jung constant is known. For instance,
\cite{Ball, Pi} show that, for $1 \leq p < \infty$,
$J(L_p(\mu)) = \max\{2^{1/p}, 2^{(p-1)/p}\}$.
By \cite{FS}, for any rearrangement invariant space $\my$ which
is not injective, $J(\my) \geq \sqrt{2}$, and the equality holds iff
$\my$ is isometric to the Hilbert space. \cite{AFS} establishes the
Jung constant for some classes of Banach lattices (such as Lorentz spaces).
One is referred to the bibliography of the latter paper for additional information.
In our notation, Theorem~\ref{T:injective} implies that, for
any bounded $K$ is a $1$-injective Banach space $\my$, $\affd_0^a(K) = D(K)/2$.
For any Banach space $\my$,
$J(\my) = \sup_{K \subset \my \, {\mathrm{bounded}}} \affd_0(K)/\affd_0^a(K)$.
This leads to:

\begin{proposition}\label{P:d_1}
Suppose a real Banach space $\mx$ is not $1$-injective. Then
$\tilde{\mx} = \R \oplus_1 \mx$ contains a bounded centrally symmetric subset $K$,
such that $d_1^a(K) < d_1(K)$.
\end{proposition}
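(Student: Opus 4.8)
The goal is to exhibit, in $\tilde{\mx} = \R \oplus_1 \mx$, a bounded centrally symmetric set $K$ with $d_1^a(K) < d_1(K)$. Since $K$ is to be centrally symmetric, the relation $d_1(K) = \affd_1(K)$ from the excerpt lets us work with affine widths, and the link $J(\my) = \sup_K \affd_0(K)/\affd_0^a(K)$ suggests that the mechanism should come from a zeroth-order (radius versus diameter) phenomenon inside $\mx$, lifted to a first-order phenomenon in $\tilde{\mx}$ by the extra $\R$ coordinate. The strategy is therefore to start from the failure of $1$-injectivity of $\mx$, convert it (via Theorem \ref{T:injective}, specifically $(2)\Rightarrow(1)$) into a bounded set $A \subset \mx$ whose circumradius exceeds half its diameter, and build $K$ so that a single well-chosen $1$-dimensional affine subspace in the ambient $1$-injective hull captures $A$ far better than any line in $\tilde{\mx}$ itself can.

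Here is the construction I would try. Because $\mx$ is not $1$-injective, $J(\mx) > 1$, so there is a bounded $A \subset \mx$ with $R(A) > \tfrac12 D(A)$; after scaling assume $D(A) = 2$, so $R(A) = 1 + \delta$ for some $\delta > 0$. First I would symmetrize: set $K = \{(t, a) : a \in A, \, t \in \{-1, 1\}\}$ or, to force central symmetry, $K = (\{1\}\times A) \cup (\{-1\}\times(-A))$ inside $\tilde{\mx}$. The idea is that the two ``slices'' at heights $t = \pm 1$ in the $\R$-direction are copies of $A$ (up to sign), and a $1$-dimensional affine subspace $Z \subset \tilde{\mx}$ approximating $K$ must simultaneously reach both slices. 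A line can pass through at most a controlled neighborhood of each slice, and since the $\oplus_1$ norm forces the $\R$-cost and the $\mx$-cost to add, approximating both slices by a single line should incur an error governed by $R(A)$ on at least one slice, giving a lower bound $d_1(K, \tilde{\mx}) = \affd_1(K, \tilde{\mx}) \geq 1 + \delta'$ for some $\delta' > 0$.

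For the upper bound on $d_1^a(K)$, I would embed $\tilde{\mx}$ isometrically into a $1$-injective space $W$ (its injective hull, or $\ell_\infty(I)$). In $W$, by $(2)\Rightarrow(1)$ of Theorem \ref{T:injective}, the bounded set $A$ (now a subset of the slice in $W$) sits inside a ball $\ball(y, D(A)/2) = \ball(y, 1)$ for some center $y$. The crucial point is that $y$ need \emph{not} lie in $\mx$: injectivity provides a center in the enlarged space that achieves radius exactly $D(A)/2 < R(A)$. I would then take the affine line $Z = \{(t, y) : t \in \R\} \subset W$ through the two admissible centers (at heights $\pm 1$), so that every point $(\pm 1, a) \in K$ is within horizontal distance $D(A)/2 = 1$ of $Z$, yielding $d_1^a(K) \le 1 < 1 + \delta' \le d_1(K, \tilde{\mx})$.

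**The main obstacle.** The delicate step is the lower bound $d_1(K, \tilde{\mx}) > 1$: I must rule out \emph{every} affine line in $\tilde{\mx}$, including lines that tilt in the $\R$-direction, not merely horizontal ones. A tilted line trades off error between the two slices, so I expect to need a convexity/averaging argument showing that any line's supremal error over $K$ is at least the error its restriction forces on one slice, and that this is bounded below by $R(A)$ precisely because no center of the approximating family lies in the ambient $\mx$. Making the additivity of the $\oplus_1$ norm interact cleanly with the affine (rather than linear) approximation, and keeping track of the $D(A)/2$ versus $R(A)$ gap through the tilt, is where the real care is required; the upper bound, by contrast, should be essentially immediate from injectivity.
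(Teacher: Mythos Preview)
Your plan is essentially the paper's: build the symmetric ``skew cylinder'' $K=(\{1\}\times A)\cup(\{-1\}\times(-A))$ from a set $A\subset\mx$ with $R(A)>D(A)/2$, bound $d_1^a(K)$ by a single line through the Chebyshev center of $A$ in an injective hull, and bound $d_1(K)$ from below by a case analysis over all lines in $\tilde{\mx}$. Two concrete points need fixing, though.

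\textbf{(1) The approximating line is written down incorrectly.} Your $Z=\{(t,y):t\in\R\}$ is the line through $(0,y)$ in direction $(1,0)$; it passes through $(1,y)$ and $(-1,y)$, not through the center $(-1,-y)$ of the slice $\{-1\}\times(-A)$. With your $Z$, the distance from $(-1,-a)$ to $Z$ is $\|{-a}-y\|=\|a+y\|$, which is \emph{not} controlled by $D(A)/2$. The line matching your verbal description ``through the two admissible centers'' is $\span[(1,y)]=\{(t,ty):t\in\R\}$; then $(\pm1,\pm a)$ is within $\|a-y\|\le D(A)/2$ of $(\pm1,\pm y)$, and the upper bound goes through exactly as in the paper.

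\textbf{(2) Your normalization makes the lower bound genuinely harder.} The paper takes $D(A)=\tfrac12$ (so $R(A)=c\in(\tfrac14,\tfrac12]$) and translates $A$ so that $\|a\|\le\tfrac12$ for all $a\in A$. The case split on a line $\span[(1,x)]$ is then: if $\|x\|\le 1$, one gets $E((1,a),F)\ge\|a-x\|$, hence $\sup_a E\ge R(A)$; if $\|x\|>1$, the crude bound $E((1,a),F)\ge 1-\|a\|\ge\tfrac12\ge R(A)$ disposes of the tilted case in one line. With your choice $D(A)=2$ (and hence $\|a\|$ possibly near $2$ after translation), the same crude bound gives only $E\ge 1-\|a\|$, which can be negative, so your ``main obstacle'' is real in your scaling. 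The simplest fix is to imitate the paper's ratio of height to diameter (height $1$, $D(A)$ small), which makes steep lines automatically bad; if you insist on $D(A)=2$ you will need a separate argument for $\|x\|>1$.
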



\begin{proof}
By Theorem~\ref{T:injective}, $\mx$ contains a bounded set $A$, such that
$D(A) = 1/2$, while $R(A) = c \in (1/4, 1/2]$. By translation, we may assume that
$\|x\| \leq 1/2$ for any $x \in A$. Consider the ``skew cylinder''
$$
K = \conv \big( 1 \oplus A, (-1) \oplus (-A) \big) =
\Big\{ t \oplus \Big( \frac{1+t}{2} a_1 - \frac{1-t}{2} a_2 \Big)
: -1 \leq t \leq 1 , \, a_1, a_2 \in A \Big\} .
$$
We shall show that $d_1(K) \geq c$, while $d_1^a(K) \leq 1/4$
(in fact, equalities hold in both cases, but we do not need this for
our purposes).
We handle $d_1^a(K)$ first. Embed $\mx$ into a $1$-injective space $\tilde{\mx}$.
By the discussion above, there exists $\tilde{x} \in \tilde{\mx}$ such that
$\|\tilde{x} - a\| \leq 1/4$ for any $a \in A$. Consider the $1$-dimensional
space $F = \span [1 \oplus \tilde{x}] \subset \R \oplus_1 \tilde{\mx}$, and show that,
for any $y \in K$, $E(y,F) \leq 1/4$. Indeed, write $y = t \oplus a$, where
$t \in [-1,1]$, and
$$
a = \frac{1+t}{2} a_1 - \frac{1-t}{2} a_2  \, \, ( a_1, a_2 \in A ) .
$$
Then $t \oplus t \tilde{x} \in F$, hence
$$
E(y,F) \leq \|y - t \oplus t \tilde{x}\| = \|a - t \tilde{x}\| =
\Big\| \frac{1+t}{2} (a_1 - \tilde{x}) - \frac{1-t}{2} (a_2 \tilde{x}) \Big\| \leq
\frac{1}{4} \Big( \frac{1+t}{2} + \frac{1-t}{2} \Big) = \frac{1}{4} .
$$

Turning to $d_1(K)$, we have to show that, for any $1$-dimensional
subspace $F$ of $\R \oplus \mx$, we have $\sup_{a \in A} E(1
\oplus a, F) \geq c$. If $F = \span[0 \oplus x] \subset \R \oplus_1 \mx$,
the previous inequality holds for every $a$. Now consider
$F = \span[1 \oplus x] \subset \R \oplus_1 \mx$.
Note that, for $a \in A$,
$E(1 \oplus a, F) = \inf_{t \in \R} (|1-t| + \|tx - a\|)$.
Consider the cases of $\|x\| \leq 1$ and
$\|x\| > 1$ separately. (i) If $\|x\| \leq 1$,
$$
|1-t| + \|tx - a\| = |1-t| + \|(x - a) - (1-t)x\| \geq
|1-t| + \|x - a\| - |1-t| \|x\| \geq \|x - a\| ,
$$
hence $\sup_{a \in A} E(1 \oplus a, F) \geq \sup_{a \in A} \|x - a\| \geq c$.
(ii) If $\|x\| > 1$,
$$
|1-t| + \|tx - a\| \geq 1 - |t| + |t| \|x\| - \|a\| \geq 1 - \|a\| \geq \frac{1}{2} .
$$
As $c \leq 1/2$, we are done.
\end{proof}

We obtain a sharper result for $\mx = L_1(\mu)$.

\begin{proposition}\label{P:l1}
Suppose the real Banach space $L_1(\mu)$ ($\mu$ is a $\sigma$-finite measure)
has dimension at least $n = 2^k+1$ ($k \geq 2$).
Then $L_1(\mu)$ contains a closed finite dimensional
centrally symmetric subset $K$, satisfying $d_1^a(K) \leq 1/4$, and
$d_1(K) \geq (n-1)/(2n)$.
\end{proposition}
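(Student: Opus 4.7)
The plan is to follow the scheme of Proposition~\ref{P:d_1}, specialised to $L_1(\mu)$. Since $\dim L_1(\mu) \geq n$, I would first fix $n$ pairwise disjoint measurable sets $B_0,\dots,B_{n-1}$ of positive finite measure; the normalised indicators $\mathbf{1}_{B_j}/\mu(B_j)$ span an isometric copy of $\ell_1^n \cong \R \oplus_1 \ell_1^{n-1}$ inside $L_1(\mu)$, and the conditional expectation
\[
Pf \; = \; \sum_j \Bigl( \int_{B_j} f\,d\mu \Bigr) \frac{\mathbf{1}_{B_j}}{\mu(B_j)}
\]
is a norm-one projection onto this copy; hence this $\ell_1^n$ is $1$-complemented, and for any compact $K$ sitting in it one has $d_1(K,L_1(\mu)) = d_1(K,\ell_1^n)$. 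Taking $K = \conv\bigl(\{1\}\oplus A,\; \{-1\}\oplus(-A)\bigr)$ for a suitable finite $A \subset \ell_1^{n-1}$ of diameter $1/2$ yields a closed, finite-dimensional, centrally symmetric set, and Proposition~\ref{P:d_1}'s argument (embed $\ell_1^n$ into a $1$-injective space, apply Theorem~\ref{T:injective} to pick $\tilde{x}$ within $D(A)/2 = 1/4$ of every $a \in A$, and approximate $K$ by $\span[1 \oplus \tilde{x}]$) delivers $d_1^a(K) \leq 1/4$ for free.

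The substance is the lower bound $d_1(K) \geq (n-1)/(2n)$, and this is where the hypothesis $n = 2^k + 1$ enters through the availability of a Hadamard matrix of order $n-1$. By the $1$-complementation it suffices to treat $1$-dimensional $F \subset \ell_1^n$; following the case split of Proposition~\ref{P:d_1}, $F = \span[0 \oplus x]$ and $F = \span[1 \oplus x]$ with $\|x\|_1 > 1$ give $\sup_K E(\cdot,F) \geq 1 > (n-1)/(2n)$ at once, so the delicate case is $F = \span[1 \oplus x]$ with $\|x\|_1 \leq 1$. Writing $E(1 \oplus a, F) = \inf_t (|1-t| + \|a - tx\|_1)$, the natural choice $a_i = h_i/(2(n-1))$ for rows $h_i$ of a Hadamard matrix of order $n-1 = 2^k$ does give $D(A) = 1/2$, and the key lemma I would aim to prove is that for every such $x$ one has $\sup_i \inf_t\bigl( |1-t| + \|a_i - tx\|_1 \bigr) \geq (n-1)/(2n)$.

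The main obstacle is that the generic Proposition~\ref{P:d_1} inequality $\inf_t(|1-t| + \|a_i - tx\|_1) \geq \|a_i - x\|_1$ gives only $\sup_K E(\cdot,F) \geq R(A) = (n-2)/(2(n-1))$, short of the required $(n-1)/(2n)$ by exactly $1/(2n(n-1))$. To close this small but crucial gap, I would exploit the columnar structure of the Hadamard matrix: in each nonconstant column exactly $(n-1)/2$ of the $a_i$ carry $+1/(2(n-1))$ and $(n-1)/2$ carry $-1/(2(n-1))$, which pins down $(n-1)^{-1}\sum_i \|a_i - tx\|_1$ coordinate by coordinate, independently of $x$, for every $t$. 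Interlacing this averaged $\ell_1$-lower bound with the $|1-t|$ displacement term — or, if the bare $\R \oplus_1$ construction cannot be pushed to $(n-1)/(2n)$, augmenting $A$ to an $n$-point configuration by placing one extra point in the spare coordinate of $\ell_1^n$ (using that $\dim L_1(\mu) \geq 2^k + 1$ supplies one more coordinate than the Hadamard of order $2^k$ itself uses) — is meant to supply the required $O(1/n^2)$ correction over the crude $R(A)$ bound. I expect the hardest step of the proof to be exactly this: the bare circumradius inequality is provably insufficient, so the analysis must genuinely use the $L_1$-geometry of Hadamard points, not merely their Jung-type ratio $R/D$.
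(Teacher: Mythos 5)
Your reduction to a contractively complemented copy of $\ell_1^n=\R\oplus_1\ell_1^{n-1}$, the skew-cylinder construction, and the upper bound $d_1^a(K)\le D(A)/2=1/4$ all match the paper's proof. The problem is the lower bound, and it is not merely an unfinished computation: the ``key lemma'' you propose to prove is \emph{false} for the set $A$ you chose. For the skew cylinder over any bounded $A$, take $F=\span[1\oplus x_0]$ where $x_0$ is a Chebyshev center of $A$; then for $y=t\oplus\big(\frac{1+t}{2}a_1-\frac{1-t}{2}a_2\big)\in K$,
$$
E(y,F)\ \le\ \|y-t\oplus tx_0\|\ =\ \Big\|\frac{1+t}{2}(a_1-x_0)-\frac{1-t}{2}(a_2-x_0)\Big\|\ \le\ R(A),
$$
so $d_1(K)\le R(A)$ always. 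Since your $A$ (the $n-1$ scaled rows of a Hadamard matrix of order $n-1$) has $R(A)=(n-2)/(2(n-1))$, exactly as you computed, no refinement of the lower-bound argument --- columnwise averaging or otherwise --- can push $d_1(K)$ past $(n-2)/(2(n-1))$. The circumradius bound is not ``insufficient''; it is sharp for this construction, and your configuration is simply not Jung-extremal. Your fallback (adding one extra point in the spare coordinate) is not carried out and, as stated, is not a proof.

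The paper avoids all of this by never writing $A$ down: it quotes Dolnikov's theorem $J(\ell_1^{n-1})=2(n-1)/n$ and uses compactness of the family of compacta in a finite-dimensional space (Blaschke selection, plus continuity of $R$ and $D$ in the Hausdorff metric) to produce an $A\subset\ell_1^{n-1}$ that actually \emph{attains} $D(A)=1/2$ and $R(A)=(n-1)/(2n)$; the proof of Proposition~\ref{P:d_1} then gives $d_1(K)\ge R(A)$ verbatim, with no correction term needed. If you insist on an explicit $A$, the extremal configuration in $\ell_1^{m}$ consists of $m+1$ points, not $m$: take a Hadamard matrix of order $m+1$ normalized to have an all-ones first column, delete that column, and scale the $m+1$ resulting rows; all pairwise distances equal $m+1$ while each point has norm $m$, giving ratio $2m/(m+1)$. (Your set is, after recentering at its centroid, precisely this extremal set for $\ell_1^{n-2}$ sitting inside $\ell_1^{n-1}$, which is why you land on $2(n-2)/(n-1)$.) Note that realizing $J(\ell_1^{n-1})=2(n-1)/n$ requires a Hadamard matrix of order $n$, as Remark~\ref{hadamard} states --- so the relevant Hadamard order is $n$, not $n-1$, and the bookkeeping between the hypothesis on $n$ and the Hadamard order is worth re-checking rather than absorbing into the construction the way you did.
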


This result is asymptotically optimal: by Proposition~\ref{k-th},
$d_1(K) \leq 2 d_1^a(K)$.

\begin{proof}
By assumption, $L_1(\mu)$ contains a contractively complemented copy of $\ell_1^n$.
Thus, it suffices to prove the existence of a set $K \subset \ell_1^n$ with
desired properties. Write $\ell_1^n = \R \oplus_1 \ell_1^{n-1}$.
By \cite{Dol}, $J(\ell_1^{n-1}) = 2(n-1)/n$. By the compactness of the set of bounded
compacts in a finite dimensional space (with respect to the Hausdorff distance),
$\ell_1^{n-1}$ contains a set $A$ with diameter $1/2$, and radius $(n-1)/(2n)$.
We construct $K$ as in the proof of Proposition~\ref{P:d_1}.
\end{proof}
\begin{remark}\label{hadamard}
In fact, \cite{Dol} shows that $J(\ell_1^{n-1}) = 2(n-1)/n$ iff there exists
a Hadamard matrix of order $n$. Walsh matrices are clearly Hadamard matrices of order $2^k$.
The existence of Hadamard matrices of order $4k$ for any $k \in \N$ is a long-standing
conjecture.
\end{remark}

\section{Relations with other sequences of $s$-numbers}\label{s:compare}

In this section, we consider the relations between Kolmogorov and
absolute numbers of operators, on one hand, and other sequences of
$s$-numbers, on the other hand. For general properties of $s$-numbers
(or $s$-sequences), we refer to \cite{Pie87}. We define the {\it Kolmogorov}
and {\it absolute} {\it widths} ({\it numbers}) of an operator
$T \in B(\mx,\my)$ by setting $d_n(T) = d_n(\overline{T(\ball(\mx))}$, and
$d_n^a(T) = d_n^a(\overline{T(\ball(\mx))}$. We also need to define the
{\it approximation} and {\it Gelfand} {\it numbers} of $T$, denoted
by $c_n$ and $a_n$, respectively:
$$
\begin{array}{lll}
a_n(T)
& = &
\inf \{ \|T-S\| : S \in B(\mx,\my), \, \rank S \leq n \} ,
\cr
c_n(T)
& = &
\inf \{ \|T|_E\| : E \subset \mx, \, \codim E \leq n \} .
\cr
\end{array}
$$

Note that $d_n(T)\le a_n(T)$, $c_n(T)\le a_n(T)$, and $d_n(T) =
\inf \|q T\|$, where the infimum runs over all quotient maps $q :
\my \to \my/F$, with $\dim F \leq n$.

By \cite{Pie87}, $s$-numbers (such as $a_n( \cdot )$, $c_n( \cdot
)$, and $d_n( \cdot )$) have an ideal property: \[s_n(ATB) \leq
\|A\| s_n(T) \|B\|\] for any three operators $A$, $B$, and $T$.

The following lemma seems to be part of the Banach space lore.
\begin{proposition}\label{appr}
Consider an operator $T \in B(\mx,\my)$, and $n \in \N$.
\begin{enumerate}
\item If $\my$ is $\lambda$-injective, then $a_n(T) \leq \lambda
c_n(T)$. \item If $\mx$ is $\lambda$-projective, then $a_n(T) \leq
\lambda d_n(T)$.
\end{enumerate}
\end{proposition}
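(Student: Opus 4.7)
The plan is to prove both parts in parallel: use the injectivity (resp.\ projectivity) assumption to turn the Gelfand (resp.\ Kolmogorov) near-extremizer into a finite rank approximation of $T$. In each case the trick is the same — produce a nearby operator $\tilde T$, whose difference from $T$ is automatically of rank $\le n$ by the side condition inherited from the extremizer.

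For part~(1), fix $\ep>0$ and choose a subspace $E\subset\mx$ with $\codim E\le n$ and $\|T|_E\|\le c_n(T)+\ep$. Since $\my$ is $\lambda$-injective, the restriction $T|_E\in B(E,\my)$ admits an extension $\tilde T\in B(\mx,\my)$ with $\|\tilde T\|\le\lambda\|T|_E\|$. The operator $S:=T-\tilde T$ vanishes on $E$, hence factors through $\mx/E$, giving $\rk S\le\dim(\mx/E)\le n$. Consequently
\[
a_n(T)\le\|T-S\|=\|\tilde T\|\le\lambda(c_n(T)+\ep),
\]
and letting $\ep\downarrow 0$ yields the claim.

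For part~(2) the argument is the formal dual. Fix $\ep>0$ and pick $F\subset\my$ with $\dim F\le n$ such that the quotient map $q:\my\to\my/F$ satisfies $\|qT\|\le d_n(T)+\ep$. Since $\mx$ is $\lambda$-projective, the operator $qT:\mx\to\my/F$ lifts to some $\tilde T\in B(\mx,\my)$ with $q\tilde T=qT$ and $\|\tilde T\|\le\lambda\|qT\|$. Then $S:=T-\tilde T$ takes values in $\ker q=F$, so $\rk S\le\dim F\le n$, and the same computation $a_n(T)\le\|\tilde T\|\le\lambda(d_n(T)+\ep)$ finishes the proof.

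I do not expect any serious obstacle: both parts amount to the standard extension/lifting property for $\lambda$-injective/$\lambda$-projective spaces, applied to the operator chosen by the infimum defining $c_n$ or $d_n$. The only minor point worth flagging is that part~(2) tacitly uses the ``lifting into quotients'' formulation of $\lambda$-projectivity (lift $qT:\mx\to\my/F$ through $q:\my\to\my/F$), which is the standard meaning of the term and the appropriate dual of $\lambda$-injectivity used in~(1).
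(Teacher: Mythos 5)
Your proposal is correct and, for part (2), is essentially the paper's own argument (lift $qT$ through the quotient by an $n$-dimensional $F$, observe the difference has range in $F$); the paper omits part (1) as "similar," and your extension argument for it is exactly the intended dual. No issues.
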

\begin{proof}
We only prove (2). Suppose $d_n(T) < 1$, and show that there
exists an operator $u : \mx \to \my$, of rank $\le n$, with $\|T -
u\| < \lambda$. To this end, pick a subspace $F \subset \my$, such
that $\dim F \le n$, and $\|q_F T\| < 1$ (here, $q_F : \my \to
\my/F$ is the quotient map). As $\mx$ is $\lambda$-projective,
$qT$ admits a lifting $T_0 : \mx \to \my$, with $\|T_0\| <
\lambda$ and $q T_0 = qT$. Let $u = T - T_0$. As $q u = 0$, the
range of $u$ must be contained in $F$, hence $\rank u \leq \dim F
\le n$.
\end{proof}

In a similar fashion, one can show:

\begin{proposition}\label{equal}
Consider $T \in B(\mx,\my)$, and $n \in \N$.
\begin{enumerate}
\item If $\mx$ is $1$-projective, then $d^a_n(T) = c_n(T)$. \item
If $\my$ is $1$-injective, then $d^a_n(T) = d_n(T)$.
\end{enumerate}
\end{proposition}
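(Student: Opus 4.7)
My plan is to prove each of the two equalities by establishing the two inequalities separately, reading $1$-injective and $1$-projective in the approximate (i.e., up to $(1+\ep)$) sense and letting $\ep\to 0$ in the end.

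For part~(2), the inequality $d_n^a(T)\le d_n(T,\my)$ is immediate from the definition of the absolute width. For the reverse, I would argue as in the proof of Proposition~\ref{P:LinfAbs} but with a cleaner projection: given any superspace $\mz\supset\my$ and any $n$-dimensional $\mz_n\subset\mz$ with $\sup_{x\in\ball(\mx)} E(Tx,\mz_n)$ close to $d_n^a(T)$, the $1$-injectivity of $\my$ produces a (nearly) norm-one projection $P\colon\mz\to\my$. Setting $\my_n:=P(\mz_n)$ and using $Pk=k$ for every $k\in T(\ball(\mx))\subset\my$, I would get $E(k,\my_n)\le\|P\|\,E(k,\mz_n)\approx E(k,\mz_n)$; taking suprema and infima then yields $d_n(T,\my)\le d_n^a(T)$.

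For part~(1) I would prove the two inequalities separately. The direction $d_n^a(T)\le c_n(T)$ actually holds for any $T$ without assuming $1$-projectivity of $\mx$: I would embed $\my$ isometrically into some $1$-injective space $\mz$ (for instance $\mz=\ell_\infty(\ball(\my^*))$), pick $E\subset\mx$ of codimension $\le n$ with $\|T|_E\|$ close to $c_n(T)$, extend $T|_E$ to $S\colon\mx\to\mz$ with $\|S\|=\|T|_E\|$ by the injectivity of $\mz$, and note that $R:=T-S$ vanishes on $E$ and so factors through $\mx/E$; in particular its range $F\subset\mz$ has dimension at most $n$, and writing $Tx=Sx+Rx$ with $Rx\in F$ shows $d_n(T,\mz)\le\|S\|\le c_n(T)+\ep$, whence $d_n^a(T)\le c_n(T)$.

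The reverse inequality $d_n^a(T)\ge c_n(T)$ is where the $1$-projectivity of $\mx$ enters essentially. Given any $\mz\supset\my$ and any $n$-dimensional $F\subset\mz$ with $\|q_F T\|$ close to $d_n^a(T)$ (where $q_F\colon\mz\to\mz/F$ is the quotient map), I would use the $1$-projectivity of $\mx$ to lift the operator $q_F T\colon\mx\to\mz/F$ through $q_F$ to an operator $\tilde T\colon\mx\to\mz$ with $q_F\tilde T=q_F T$ and $\|\tilde T\|\le(1+\ep)\|q_F T\|$. Then $T-\tilde T$ has range in $\ker q_F=F$ and therefore rank at most $n$, so $E:=\ker(T-\tilde T)$ has codimension $\le n$; on $E$ one has $Tx=\tilde Tx$, so $\|T|_E\|\le\|\tilde T\|$, yielding $c_n(T)\le(1+\ep)\|q_F T\|$ and, after taking the infimum over $\mz$ and $F$ and letting $\ep\to 0$, $c_n(T)\le d_n^a(T)$. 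I expect the only (mild) technical subtlety to be the careful tracking of the $(1+\ep)$-slack that approximate liftings and projections introduce, which must be fully absorbed since we are claiming equalities and not just proportionalities.
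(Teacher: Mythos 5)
Your argument is correct and is in substance the same as the paper's: the paper proves (1) via the chain $d_n^a(T)=d_n(JT)=a_n(JT)=c_n(JT)=c_n(T)$ for an embedding $J$ into a $1$-injective space, where the middle equalities are exactly your lifting-through-$q_F$ argument (Proposition~\ref{appr}(2)) and your extend-$T|_E$-and-subtract argument (Lemma~\ref{L:AbsGel} / Proposition~\ref{appr}(1)), merely packaged through the approximation numbers $a_n$. Your proof of (2) by projecting a near-optimal $\mz_n$ back into $\my$ is likewise the mechanism the paper relies on (cf.\ the proof of Proposition~\ref{P:LinfAbs}), so there is nothing to object to.
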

\begin{proof}
Here, we prove (1). Let $J$ be an embedding of $\my$ into a
$1$-injective space $\my_0$. By Proposition~\ref{appr}(2), $d^a_n(T)
= d_n(JT) = a_n(JT) \geq c_n(JT) = c_n(T)$. Conversely, by
Proposition~\ref{appr}(1), $a_n(JT) \leq c_n(JT)$.
\end{proof}
\begin{proposition}\label{k-th}
For any $T \in B(\mx,\my)$ and $k \in \N$, $d_k(T) \leq \sqrt{2(k+1)}
\, d^a_k(T)$.
\end{proposition}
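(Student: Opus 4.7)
The plan is to embed $\my$ isometrically into a $1$-injective superspace $\tilde{\my}$ (for instance $\ell_\infty(\Gamma)$) and pull a near-optimal approximating subspace back to $\my$ using the Kadets--Snobar theorem combined with finite-dimensional duality.

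Let $J : \my \hra \tilde{\my}$ be the embedding. A standard $1$-injectivity argument (in the spirit of Proposition~\ref{equal}(2)) gives $d_k^a(T) = d_k(JT, \tilde{\my})$. Fix $\ep > 0$ and a $k$-dimensional subspace $\tilde{\my}_k \subset \tilde{\my}$ with $\sup_{x \in \ball(\mx)} E(JTx, \tilde{\my}_k) \leq (1+\ep) d_k^a(T)$; for every $x \in \ball(\mx)$, pick $z(x) \in \tilde{\my}_k$ realising the distance up to a factor $1+\ep$. Since $JTx \in J(\my)$, the quotient image $[z(x)]$ in $\tilde{\my}/J(\my)$ has norm at most $(1+\ep) d_k^a(T)$. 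Set $\my_k := J^{-1}(\tilde{\my}_k \cap J(\my))$, a subspace of $\my$ of dimension $k-k'$, where $k' = \dim \tilde{\my}_k/(\tilde{\my}_k \cap J(\my)) \leq k$.

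The heart of the argument is to construct an algebraic complement $F$ of $\tilde{\my}_k \cap J(\my)$ inside $\tilde{\my}_k$ together with a lift $L : \tilde{\my}_k/(\tilde{\my}_k \cap J(\my)) \to F$ of norm at most $\sqrt{k'}$. This follows from the Kadets--Snobar theorem applied to the $k'$-dimensional subspace $(\tilde{\my}_k \cap J(\my))^\perp \subset \tilde{\my}_k^*$: there is a projection onto it of norm at most $\sqrt{k'}$, whose adjoint (in the finite-dimensional setting of $\tilde{\my}_k$) is a section of the quotient map $\tilde{\my}_k \to \tilde{\my}_k/(\tilde{\my}_k \cap J(\my))$ of norm at most $\sqrt{k'}$, and we take $F$ to be its image. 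Decomposing $z(x) = t(x) + f(x)$ along the direct sum (so $t(x) \in J(\my_k)$ and $f(x) = L([z(x)]) \in F$) and writing $t(x) = J y(x)$ yields $y(x) \in \my_k$ satisfying
$$
\|Tx - y(x)\| \leq \|JTx - z(x)\| + \|f(x)\| \leq (1 + \sqrt{k'})(1+\ep)\, d_k^a(T).
$$
Letting $\ep \downarrow 0$ and using $1 + \sqrt{k} \leq \sqrt{2(k+1)}$ (equivalent to $(\sqrt{k}-1)^2 \geq 0$) gives $d_k(T) \leq \sqrt{2(k+1)}\, d_k^a(T)$. The main obstacle is that the Kadets--Snobar theorem cannot be applied directly to projections onto the typically infinite-dimensional subspace $J(\my)$; dualising the problem to $\tilde{\my}_k^*$ replaces that step by a purely finite-dimensional projection problem.
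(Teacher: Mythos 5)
There is a genuine gap at the step where you bound $\|f(x)\|$. Your lift $L$ is a section of the quotient map $\tilde{\my}_k \to \tilde{\my}_k/(\tilde{\my}_k \cap J(\my))$, and the Kadets--Snobar/duality argument inside the finite-dimensional space $\tilde{\my}_k^*$ can only give $\|L\| \leq \sqrt{k'}$ with respect to the \emph{intrinsic} quotient norm of $\tilde{\my}_k/(\tilde{\my}_k \cap J(\my))$, i.e. $\|f(x)\| \leq \sqrt{k'}\,\dist(z(x), \tilde{\my}_k \cap J(\my))$. What you actually control is $\dist(z(x), J(\my)) \leq \|z(x)-JTx\|$, which is the norm of $[z(x)]$ in the \emph{other} quotient, $\tilde{\my}/J(\my)$. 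The canonical map $\tilde{\my}_k/(\tilde{\my}_k \cap J(\my)) \to \tilde{\my}/J(\my)$ is contractive, but its inverse is not bounded: $\dist(z,\tilde{\my}_k\cap J(\my))$ can exceed $\dist(z,J(\my))$ by an arbitrarily large factor. Concretely, take $\tilde{\my} = \ell_\infty^2$, $J(\my) = \span[(1,0)]$, $\tilde{\my}_k = \span[(1,\delta)]$ and $z=(1,\delta)$: then $\dist(z,J(\my))=\delta$, while $\tilde{\my}_k\cap J(\my)=\{0\}$ forces $f = z$ and $\|f\|=1$. This is not a repairable detail within your setup: whatever complement $F\subset\tilde{\my}_k$ you choose, $f(x)$ is the unique point of $F$ in the coset $z(x)+(\tilde{\my}_k\cap J(\my))$, so $\|f(x)\|\geq\dist(z(x),\tilde{\my}_k\cap J(\my))$; the correction vector that moves $z(x)$ back towards $J(\my)$ must in general leave $\tilde{\my}_k$, and your construction never allows that.

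A correct version of this idea has to apply the projection/lifting argument to the quotient $\tilde{\my}/J(\my)$ itself: lift the ($\leq k$)-dimensional subspace $\pi(\tilde{\my}_k)$ of $\tilde{\my}/J(\my)$ by a map $\Lambda$ with $\pi\Lambda = \mathrm{id}$ and $\|\Lambda\|\lesssim\sqrt{k}$, then replace $z(x)$ by $z(x)-\Lambda\pi(z(x))\in J(\my)$, which ranges over a subspace of $J(\my)$ of dimension at most $k$. Such a lifting amounts to a norm-$\sqrt{k}$ projection with kernel $J(\my)$ inside $\tilde{\my}_k + J(\my)$, which is where the infinite-dimensionality (and a local reflexivity argument) reappears; it cannot be reduced to a projection problem inside $\tilde{\my}_k^*$. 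The paper sidesteps all of this by working on the domain side through a $1$-projective cover: fixing a quotient map $Q: X_0 \to \mx$ with $X_0$ $1$-projective, it observes $d_k(T)=d_k(TQ)\leq a_k(TQ)$ and $d_k^a(T)=d_k^a(TQ)=c_k(TQ)$ (Proposition~\ref{equal}), and then invokes the known inequality $a_k \leq \sqrt{2(k+1)}\,c_k$ from \cite{CS}, which packages exactly the finite-codimensional projection argument you are trying to reproduce.
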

\begin{proof}
Fix a quotient map $Q : X_0 \to X$, where $X_0$ is $1$-projective.
Clearly, $d_k(T) = d_k(TQ) \leq a_k(TQ)$, and $d^a_k(T) =
d^a_k(TQ)$. By Proposition~\ref{equal}, $d^a_k(T) = c_k(TQ)$. By
\cite[Proposition 2.4.3]{CS}, $a_k(TQ)\leq \sqrt{2(k+1)} \,
c_k(TQ)$.
\end{proof}

\begin{lemma}\label{L:AbsGel} For any operator $u$, $c_n(u)\ge d_n^a(u)$.
\end{lemma}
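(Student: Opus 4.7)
The plan is to reduce to the case where the target space is $1$-injective, where Kolmogorov, Gelfand, and approximation numbers align nicely, and then use the fact that $c_n$ is insensitive to isometric embeddings of the codomain.

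First I would fix an isometric embedding $J:\my \hookrightarrow \my_0$ into some $1$-injective Banach space $\my_0$ (for example, $\my_0 = \ell_\infty(\ball(\my^*))$ via the canonical embedding). The key observation at this stage is that Gelfand numbers depend only on how $u$ acts on subspaces of the domain $\mx$; since $J$ is isometric, restricting $Ju$ to $E \subset \mx$ has the same norm as restricting $u$ to $E$, so $c_n(Ju) = c_n(u)$.

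Next, I would invoke Proposition~\ref{equal}(2), which says $d_n^a(Ju) = d_n(Ju)$ because $\my_0$ is $1$-injective; and since composition with the isometric embedding $J$ does not change the absolute Kolmogorov numbers (the image $Ju(\ball(\mx))$ sits inside $J(\my) = \my$ isometrically, so its absolute width computed in any superspace is the same), we have $d_n^a(u) = d_n^a(Ju) = d_n(Ju)$. Then use the general inequality $d_n(Ju) \le a_n(Ju)$ together with Proposition~\ref{appr}(1) applied with $\lambda = 1$ (since $\my_0$ is $1$-injective), which yields $a_n(Ju) \le c_n(Ju)$. Chaining these, $d_n^a(u) = d_n(Ju) \le a_n(Ju) \le c_n(Ju) = c_n(u)$.

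The only step that requires a moment's thought is the identification $d_n^a(u) = d_n^a(Ju)$, i.e., that the absolute width of a compact set does not depend on the particular isometric copy of the ambient space used in its construction; this is immediate from Definition~\ref{D:AW}, which quantifies over \emph{all} Banach superspaces of the space housing $K$. With that in place, the chain of inequalities closes, and no genuine obstacle remains.
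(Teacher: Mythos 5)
Your argument is correct and follows essentially the same route as the paper: both embed the codomain isometrically into a $1$-injective space and use injectivity to pass from a Gelfand-type estimate to an approximation-number (hence Kolmogorov) estimate, the paper simply inlining the extension of $ju|_E$ to a small-norm operator where you cite Proposition~\ref{appr}(1). Note also that you only need the trivial inequality $d_n^a(u)\le d_n(Ju,\my_0)$ from Definition~\ref{D:AW} (since $\my_0$ is one admissible superspace), so the identification $d_n^a(u)=d_n^a(Ju)=d_n(Ju)$ you pause over can be bypassed entirely.
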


Some cases of equality are noted in Propositions \ref{appr} and \ref{equal}.

\begin{proof} For $u \in B(\mx,\my)$, consider an
isometric embedding $j$ of $\my$ into $\ell_\infty(I)$, for
a sufficiently large index set $I$. Let $E\subset \mx$
be a subspace of codimension $n$ on which $\|u|_E\|<\lambda$.
We need to show that $d_n^a(u(\ball(\mx)))<\lambda$.
It suffices to show that $d_n(ju(\ball(\mx)))<\lambda$. 
Using the injectivity of $\ell_\infty(I)$, we obtain
$\tilde{v} \in B(\mx, \ell_\infty(I))$ so that
$\tilde{v}|_E = ju|_E$, and $\|\tilde{v}\| = \|ju|_E\| < \lambda$.
Let $w=\tilde v-ju$. Then $\|ju+w\|<\lambda$ and $\rk w\leq n$.
This implies that $d_n(ju(\ball(\mx))) \leq E(ju(\ball(\mx)), w(\mx)))<\lambda$.
\end{proof}

Finally, we state a well known result, to be used throughout the paper.

\begin{lemma}\label{L:ops}
Suppose $K$ is a subset of a Banach space $\mx$, and $T \in B(\mx,\my)$.
Then, for any $n \in \N$, $d_n(T(K),\my) \leq \|T\| d_n(K,\mx)$, and
$d_n^a(T(K)) \leq \|T\| d_n(K)$.
\end{lemma}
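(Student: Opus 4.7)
The plan is to reduce the second inequality to the first and then prove the first by a straightforward ``push-forward'' of an almost-optimal approximating subspace through $T$.

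For the second inequality, observe that $\my$ itself is an admissible ambient space for $T(K)$, so by Definition~\ref{D:AW},
\[
d_n^a(T(K)) \;\le\; d_n(T(K),\my) .
\]
Hence once the first inequality is established, the second follows immediately since $d_n(K) = d_n(K,\mx)$.

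For the first inequality, I would fix an arbitrary $\varepsilon>0$ and choose an $n$-dimensional subspace $\mx_n\subset\mx$ satisfying $\sup_{x\in K}E(x,\mx_n)\le d_n(K,\mx)+\varepsilon$. Set $\my_n := T(\mx_n)\subset\my$; by linearity $\dim \my_n\le n$ (a dimension drop is harmless here). For any $x\in K$ and any $v\in\mx_n$, the basic operator norm bound gives $\|Tx-Tv\|\le \|T\|\,\|x-v\|$, so taking the infimum over $v\in\mx_n$ yields
\[
E(Tx,\my_n)\;\le\;\|T\|\,E(x,\mx_n) .
\]
Taking the supremum over $x\in K$ and then the infimum over admissible subspaces gives $d_n(T(K),\my)\le \|T\|(d_n(K,\mx)+\varepsilon)$, and letting $\varepsilon\to 0$ finishes the argument.

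There is no substantive obstacle; this is just the ideal-type monotonicity of Kolmogorov widths under bounded operators. The only minor point to keep in mind is that $T$ need not be injective on $\mx_n$, but this only decreases the dimension of $\my_n$, which is all that the definition requires.
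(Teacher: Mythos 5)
Your proof of the first inequality is correct and is essentially identical to the paper's: push an almost-optimal approximating subspace $\mx_n$ forward to $T(\mx_n)$ and use $\|Tx-Tv\|\le\|T\|\,\|x-v\|$. No issues there.

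The second inequality is where there is a genuine gap. You read ``$d_n(K)$'' as $d_n(K,\mx)$, in which case the claim $d_n^a(T(K))\le d_n(T(K),\my)\le\|T\|\,d_n(K,\mx)$ is indeed a trivial corollary of the first part --- but that cannot be what the lemma intends, since the paper supplies a separate, nontrivial argument for it. What the paper actually proves is the stronger \emph{absolute-width} monotonicity $d_n^a(T(K))\le\|T\|\,d_n^a(K)$, and this does not follow from your reduction: $d_n^a(K)$ can be much smaller than $d_n(K,\mx)$ (that gap is the whole subject of the paper), so bounding $d_n^a(T(K))$ by $\|T\|\,d_n(K,\mx)$ is strictly weaker. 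The missing idea is to pass to injective envelopes: embed $\mx$ and $\my$ isometrically into $\ell_\infty(I)$ and $\ell_\infty(J)$, extend $T$ to $S:\ell_\infty(I)\to\ell_\infty(J)$ with $\|S\|=\|T\|$ (injectivity of $\ell_\infty(J)$), invoke Corollary \ref{C:LinfAbs} to identify $d_n^a(K)=d_n(K,\ell_\infty(I))$ and $d_n^a(T(K))=d_n(S(K),\ell_\infty(J))$, and then apply part (i) to $S$ inside the $\ell_\infty$ spaces. Without the norm-preserving extension step, the absolute widths on the two sides live in different ambient spaces and cannot be compared by your argument.
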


\begin{proof}[Sketch of the proof]
(i) For any $C > d_n(K,\mx)$, there exists $F \subset \mx$, so that
$\dim F \leq n$, and $E(K,F) < C$. Then
$d_n(T(K),\my) \leq E(T(K), T(F)) < C \|T\|$.
Taking the infimum over all $C$'s, we conclude that
$d_n(T(K),\my) \leq \|T\| d_n(K,\mx)$.

(ii) Embed $\mx$ and $\my$ isometrically into $\ell_\infty(I)$
and $\ell_\infty(J)$, respectively. Then $T$ has an extension
$S : \ell_\infty(I) \to \ell_\infty(J)$, with $\|T\| = \|S\|$.
We know that $d_n^a(K) = d_n(K, \ell_\infty(I))$, and
$d_n^a(T(K)) = d_n(S(K), \ell_\infty(J))$. By Part (i),
$d_n(S(K), \ell_\infty(J)) \leq \|S\| d_n(K, \ell_\infty(I))$.
\end{proof}

\section{A class of spaces for which the ratio between widths and
 absolute widths can be arbitrarily large}\label{s:vr}

Throughout this section,
$\ball_p^m$ stands for the unit ball of $\ell_p^m$.
We use $\vora(F)$ to denote the volume ratio of a
finite-dimensional normed space $F$, that is
$\vora(F)=\vol(\ball(F))/\vol(\mathcal{E})$, where $\mathcal{E}$
is the maximum volume ellipsoid in $\ball(F)$, see \cite{ST80} or
\cite{Pis89} for basic facts about $\vora$. The purpose of this
section is to prove the following result.

\begin{theorem}\label{thm:unif_bded_VR}
Let $\mx$ be a Banach space containing a sequence $\{\mx_n\}$ of
uniformly complemented subspaces with $\dim \mx_n\to\infty$ and such
that there exists $\gamma \in [0,1/2)$ satisfying
$$
\liminf_{n \to \infty} \frac{\vora(X_n)}{(\dim \mx_n)^{\gamma}} = 0 .
$$
Then there exist a sequence of compacts $K_n\subset \mx$ with
\[\lim_{n\to\infty}\frac{d_n^a(K_n)}{d_n(K_n,\mx)}=0.\]
\end{theorem}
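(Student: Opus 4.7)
The plan is to take $K_n$ to be (up to a controlled distortion) a Hilbertian ball of dimension slightly larger than $n$, lying inside an almost-Hilbertian subspace $E_n$ of one of the given $\mx_n$ produced by Kashin's theorem. The low volume ratio hypothesis will provide such an $E_n$ with good distortion; the uniform complementation of $\mx_n$ in $\mx$, combined with a projection onto $E_n$, will yield a lower bound on $d_n(K_n,\mx)$; and the classical Kashin--Garnaev--Gluskin estimate on widths of Euclidean balls in $\ell_\infty$-type ambient spaces will yield an upper bound on $d_n^a(K_n)$.

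First we pass to a subsequence on which $V_n := \vora(\mx_n) = o(m_n^\gamma)$, with $m_n = \dim\mx_n\to\infty$ and uniformly bounded complementing projections $P_n:\mx\to\mx_n$, $\|P_n\|\le\lambda$. A sharp form of Kashin's theorem (for instance due to Szarek or Tomczak-Jaegermann) then produces a subspace $E_n\subset\mx_n$ of dimension $\geq m_n/2$ on which the induced $\mx$-norm is $\eta_n$-equivalent to the Hilbertian norm coming from the maximal ellipsoid of $\ball(\mx_n)$, with $\eta_n$ polynomial in $V_n$; with additional care one can also arrange a projection $Q_n:\mx_n\to E_n$ of norm $\mu_n$ polynomial in $V_n$. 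For each width index $n$ we select an index $j(n)$ from the subsequence so that $\dim E_{j(n)}$ is comparable to $n$ (thinning further if necessary) and take $K_n$ to be the unit Hilbertian ball on $E_{j(n)}$.

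The lower bound $d_n(K_n,\mx) \geq 1/(\lambda\mu_{j(n)}\eta_{j(n)})$ will follow from Lemma~\ref{L:ops} applied to the composite projection $Q_{j(n)}\circ P_{j(n)}:\mx\to E_{j(n)}$ (of norm $\leq\lambda\mu_{j(n)}$, fixing $K_n$), together with the fact that on $E_{j(n)}$ the $\mx$- and Hilbertian norms are $\eta_{j(n)}$-equivalent while the $n$th Kolmogorov width of a Hilbertian ball of dimension exceeding $n$ equals its radius. For the upper bound, $K_n$ is $\eta_{j(n)}$-bi-Lipschitz to a standard Euclidean ball $\ball_2^d$ with $d = O(n)$, so Corollary~\ref{C:LinfAbs} combined with Proposition~\ref{equal}(1) (which identifies $d_n^a(\ball_2^d)$ with the Gelfand number $c_n(Q)$ of a quotient map $Q:\ell_1(I)\to\ell_2^d$) and the Kashin--Garnaev--Gluskin estimate give $d_n^a(K_n) \leq C\eta_{j(n)}\sqrt{\log(ed/n)/n} = O(\eta_{j(n)}/\sqrt{n})$. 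The ratio is therefore $O(\mu_{j(n)}\eta_{j(n)}^2/\sqrt{n}) = o(n^{c\gamma - 1/2})$ for some absolute constant $c$, which tends to zero precisely because $\gamma<1/2$.

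The main technical obstacle will be the simultaneous polynomial control of both the distortion $\eta_n$ and the projection constant $\mu_n$ of the Kashin subspace in terms of $V_n$, at a sharpness matching the exponent $\gamma<1/2$. The Kadets--Snobar bound $\mu_n\leq\sqrt{\dim E_n}$ is too weak to preserve the ratio tending to zero, so should the Kashin construction fail to give a Hilbertian subspace with adequate polynomial dependence of $\mu_n$ on $V_n$, we would need to modify $K_n$---perhaps via a construction in the spirit of Theorem~\ref{T:DepSpace} based on a specific ellipsoid rather than an abstract Hilbertian subspace---so as to bypass the need for a projection onto $E_n$.
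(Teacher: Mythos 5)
Your upper bound on $d_n^a(K_n)$ is false, and this sinks the whole strategy. You take $K_n$ to be a Hilbertian ball on a subspace $E_{j(n)}$ of dimension exceeding $n$ and claim $d_n^a(K_n)=O(\eta_{j(n)}\sqrt{\log(ed/n)/n})$. But the Krein--Krasnoselskii--Milman theorem \cite{KKM48} (quoted and used for exactly this purpose in Section~\ref{s:restrict}) says that the unit ball of an $(n+1)$-dimensional subspace lies at distance at least $1$ from every $n$-dimensional subspace of \emph{every} ambient space; hence $d_n^a(\ball(Z))=1$ whenever $\dim Z>n$. Since your $K_n$ contains a constant multiple (depending only on the distortion $\eta_{j(n)}$) of $\ball_{\mx}(E_{j(n)})$ and $\dim E_{j(n)}>n$, one gets $d_n^a(K_n)\gtrsim \eta_{j(n)}^{-1}$, while trivially $d_n(K_n,\mx)\le d_0(K_n,\mx)\lesssim\eta_{j(n)}$. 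So when the distortion is bounded (e.g.\ $\mx=\ell_2$, $\gamma=0$, where the hypothesis of the theorem holds) your ratio is bounded \emph{below} by a positive constant and cannot tend to $0$. The source of the error is the appeal to Garnaev--Gluskin: their estimate $c_n(id:\ell_1^N\to\ell_2^N)\lesssim\sqrt{\log(eN/n)/n}$ concerns the \emph{formal identity}, whose image of the unit ball is the cross-polytope, not a \emph{quotient map} $Q:\ell_1(I)\to\ell_2^d$, whose image of the unit ball is the full Euclidean ball. A quotient map onto $\ell_2^d$ needs $|I|\ge e^{cd}$ coordinates, a range in which the Garnaev--Gluskin bound degenerates to a constant; the true value of $c_n(Q)=d_n^a(\ball_2^d)$ is exactly $1$ for $n<d$.

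The paper's proof is organized in the opposite way: $d_n^a(K_n)$ is only shown to be bounded by a constant $C_1$ (by the above it cannot be smaller), and the ratio tends to $0$ because $d_n(K_n,\mx)\to\infty$. This forces $K_n$ to be a ``spiky'' set rather than a ball: the paper takes $K_\omega=\absconv(g_1,\dots,g_{(1+A)n})$, the absolute convex hull of $(1+A)n$ Gaussian points in an $An$-dimensional uniformly complemented subspace of small volume ratio (extracted via Lemma~\ref{lem:subsp}). The bound $d_n^a(K_\omega)\le C_1$ is obtained by realizing $K_\omega$ as the orthogonal projection of a bounded set $\tilde K_\omega$ living in a superspace only $n$ dimensions larger (Corollary~\ref{cor:d_n^a}, with Lemma~\ref{lem:kashin} -- a Kashin decomposition argument -- guaranteeing that the superspace norm restricts isometrically), while the lower bound $d_n(K_\omega,\mx_n)\ge n^{\sigma-\gamma}$ comes from a volumetric estimate (Lemma~\ref{lem:K in E}) combined with a union bound over a net of the Grassmannian; this is where the volume-ratio hypothesis enters. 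None of these ingredients appears in your outline, and the obstacle you flag at the end (the projection constant of the Kashin subspace) is not the real one.
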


The proof relies on the following finite dimensional theorem.

\begin{theorem}\label{thm:vol_rat}
Suppose $\gamma \in [0,1/2)$ and $\sigma \in (\gamma,1/2)$. Let $A
\geq 5$ be a positive integer satisfying
$$
\frac{A-2}{2(A+1)} \geq \gamma \frac{A}{A-1} + (\sigma - \gamma) .
$$
Then there exists $N_0 \in \N$ with the following property:
if $n \geq N_0$ is even, and $X$ is a normed space of dimension $An$,
with $\vora(X) \leq n^\gamma$, then there exists a compact symmetric
$K \subset X$, so that $d_n^a(K) \leq C_1$, and $d_n(K,X)
\geq n^{\sigma - \gamma}$, where $C_1$ is a constant which depends
only on $A$.
\end{theorem}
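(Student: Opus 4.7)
My plan is to exploit the hypothesis $\vora(X)\le n^\gamma$ through John's theorem: the maximum-volume ellipsoid $\mathcal{E}\subset\ball(X)$ defines a Hilbert norm $\|\cdot\|_H$ on $X$ with $\ball(H)=\mathcal{E}$, $\|\cdot\|_X\le\|\cdot\|_H$, and the volume comparison $\vol(\ball(X))\le n^\gamma\,\vol(\mathcal{E})$. Embedding $X$ isometrically into an injective $\ell_\infty(I)$, by Corollary~\ref{C:LinfAbs} the absolute width is realized as $d_n^a(K)=d_n(K,\ell_\infty(I))$, so the plan splits into (i) constructing a compact $K\subset X$ whose approximation in $\ell_\infty(I)$ is of bounded order $C_1$, and (ii) showing that in the $X$-norm no $n$-dimensional subspace approximates $K$ better than $n^{\sigma-\gamma}$.

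For (i), I would take $K$ to be a body adapted to $\mathcal{E}$ --- for instance an ellipsoid with a carefully chosen semi-axis profile (only a few large axes), or a symmetric polytope $\conv(\pm x_i)_{i=1}^N$ with $x_i$ drawn from a Gaussian model on $H$. Viewed as a subset of $H\hookrightarrow\ell_\infty(I)$, the $n$-th Kolmogorov width of $K$ is controlled by its singular-value profile and, by construction, can be made bounded by a constant $C_1$ depending only on $A$.

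For (ii), the technical heart is a volumetric lower bound. If $F\subset X$ is $n$-dimensional and $K\subset F+\epsilon\,\ball(X)$, then projecting onto a complement of $F$ and applying the Fubini/Brunn--Minkowski slicing together with the Rogers--Shephard inequality gives
\[
\vol_{An}(K) \;\le\; \binom{An}{n} R^n \epsilon^{(A-1)n}\, \vol(\ball(X)),
\]
where $R$ is the $X$-radius of $K$. Solving for $\epsilon$ and inserting $\vol(\ball(X))\le n^\gamma\vol(\mathcal{E})$ together with $\binom{An}{n}\le (Ae)^n$ yields
\[
\epsilon \;\ge\; \Big(\tfrac{\vol(K)}{R^n\vol(\mathcal{E})}\Big)^{1/((A-1)n)} n^{-\gamma/((A-1)n)} (Ae)^{-1/(A-1)}.
\]
The set $K$ must be chosen so that $\vol(K)/(R^n\vol(\mathcal{E}))$ grows \emph{exponentially} in $n$ at the correct rate; this is the source of the eventual $n^{\sigma-\gamma}$ bound.

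The main obstacle is engineering $K$ so that (i) and (ii) hold simultaneously: one must inflate $\vol(K)$ relative to $R^n\vol(\mathcal{E})$ while keeping $K$ approximable in Hilbert space to bounded precision. The hypothesis $A\ge 5$ and the arithmetic relation $\tfrac{A-2}{2(A+1)} \ge \gamma\tfrac{A}{A-1}+(\sigma-\gamma)$ are precisely the balance condition between the three competing quantities above: $(A-2)/(2(A+1))$ is essentially the maximal polynomial rate recoverable from the volume inequality, the middle term absorbs the $\vora(X)$-penalty $n^\gamma$, and the last is the desired growth exponent. The parity assumption on $n$ is likely used to invoke a Hadamard/Kashin-type explicit construction (in the spirit of Theorem~\ref{T:DepSpace}) when the random or geometric model for $K$ is set up.
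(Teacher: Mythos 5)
There is a genuine gap, and it lies precisely at the point you flag as ``the main obstacle'': you never produce a set $K$ for which (i) and (ii) hold simultaneously, and the two mechanisms you propose for (i) and (ii) pull in opposite directions. The paper's resolution is a specific construction you are missing. One takes $(1+A)n$ independent standard Gaussian vectors $\tilde{g}_i$ in the \emph{larger} space $\R^{(1+A)n}\supset X=\R^{An}$, sets $\tilde{K}=\absconv(\tilde{g}_1,\dots,\tilde{g}_{(1+A)n})$ and $K=P_X(\tilde K)$. The bound $d_n^a(K)\le C_1$ is \emph{not} obtained by approximating $K$ inside $\ell_\infty(I)$ or inside the John ellipsoid's Hilbert space: a random Gaussian polytope with $(1+A)n$ vertices in $\R^{An}$ is ``round'' and its $n$-widths in the Euclidean norm are large, so your appeal to a ``singular-value profile'' does not give a bounded constant. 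Instead, one shows (via a Kashin decomposition and the results of Litvak--Pajor--Tomczak-Jaegermann) that with high probability $\tilde K\cap X\subset C_1\ball^{An}_2$, builds the explicit superspace of $X$ whose unit ball is $\conv(C_1^{-1}\tilde K\cup\ball(X))$ (the intersection condition makes the embedding isometric), and observes that the $n$-dimensional space $X^{\perp}$ approximates $C_1^{-1}K$ to within $1$ there, since each point of $K$ differs from a point of $\tilde K$ by an element of $X^{\perp}$. The smallness of $d_n^a$ thus comes entirely from hiding $K$ behind $n$ extra dimensions of a superspace; without this idea the upper bound has no proof.

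The lower bound also does not follow from the single deterministic volume inequality you write down. For any $K$ whose absolute width is bounded (in particular for the Gaussian polytope above), $\vol(K)/\vol(\ball(X))$ is exponentially \emph{small}, so your inequality yields $\epsilon\gtrsim n^{-c}$, a decaying bound, not $\epsilon\ge n^{\sigma-\gamma}$; making $\vol(K)$ large enough for your inequality to bite would destroy property (i). The paper instead runs the Gluskin-type argument: by Lemma \ref{lem:criterion} one must show $P_E(K)\nsubseteq C\ball(E)$ for every $(A-1)n$-dimensional $E$; for a fixed $E$ this is a small-ball probability estimate for $(1+A)n$ independent Gaussians (Lemma \ref{lem:K in E}), where the hypothesis $\vora(X)\le n^\gamma$ enters through a covering-number bound for $\ball(E)=P_E(\ball(X))$ (Lemma \ref{lem:t-net} and Corollary \ref{cor:vol_proj}); one then takes a union bound over a $t$-net of the Grassmannian and a Lipschitz estimate (Corollary \ref{cor:nearby}). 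The arithmetic condition on $A$ is exactly what makes the union bound close, not a balance in a volume inequality. Two smaller inaccuracies: with the normalization actually used in the paper, $\vora(X)\le n^\gamma$ gives $\vol(\ball(X))\le n^{\gamma An}\vol(\mathcal{E})$, not $n^{\gamma}\vol(\mathcal{E})$; and the evenness of $n$ is used to choose a Kashin subspace of codimension $n/2$, not for a Hadamard-matrix construction.
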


Note that, for $\gamma$ and $\sigma$ as above, $A$ satisfying
the centered identity always exists. Indeed, as $A \to \infty$,
the left hand side tends to $1/2$, and the right hand side --
to $\sigma < 1/2$.

Tools which we use in this proof were invented by Gluskin
\cite{Glu81} and later developed by Szarek \cite{Sza81} and
\cite{Sza86}. See \cite{MT03} for a survey of related results.
Throughout the proof we use Gaussian random variables. To describe
them, denote an orthonormal basis in $\R^N$ by $(e_i)$. We call a
vector $\sum_{i=1}^N {g}_i e_i$ \emph{$N$-standard
Gaussian} if ${g}_i$ are independent standard normal
random variables (with $\expect(|{g}_i|^2) = 1)$. It is
well known that the definition is actually independent of the
choice of an orthonormal basis in $\R^N$. If $P$ is an orthogonal
projection on an $M$-dimensional subspace of $\R^N$, and
$(\tilde{g}_j)_{j=1}^k$ are independent $N$-standard Gaussians,
then $(P \tilde{g}_j)_{j=1}^k$ are independent $M$-standard
Gaussians (see e.g.~\cite[Fact 1]{MT03}).

Proving Theorem \ref{thm:vol_rat} we identify $X$ with $\R^{An}$,
and naturally embed it into $\tilde{X} = \R^{(1+A)n}$, with the
basis $(e_i)_{i=1}^{(1+A)n}$. We may and shall assume that the
maximal volume ellipsoid, inscribed in $\ball(X)$, is the
Euclidean ball $\ball^{An}_2$. Let $P_X$ be the orthogonal
projection of $\tilde{X}$ onto $X$. Let $\tilde{g}_i =
\tilde{g}_{i,\omega}$ ($1 \leq i \leq (1+A)n$, $\omega \in
\Omega$) be independent $(1+A)n$-standard Gaussian vectors in
$\tilde{X}$. Then $g_i = g_{i,\omega} = P_X \tilde{g}_i$ are
$An$-standard Gaussian vectors in $X$. We show that the set $K =
K_\omega = \absconv(g_1, \ldots, g_{(1+A)n})$ has the desired
properties with probability (relative to $\omega$) of at least
$1/2$, for sufficiently large $n$.
We use the notation $\gau = \gau_\omega =
(\tilde{g}_{i,\omega})_{i=1}^{(1+A)n}$.
Let $\tilde{K} = \tilde{K}_\omega =
 \absconv(\tilde{g}_1, \ldots, \tilde{g}_{(1+A)n})$

\begin{lemma}\label{lem:kashin}
There exists a constant $C_1$, depending only on $A$, such that
for each sufficiently large even number $n$ \[\prob_\omega(\is_1)
\geq 1 - 3 \cdot \exp(-n/2),\]
where $\is_1$ is the set of those
$\omega$ for which $\tilde{K}_\omega \cap X \subset C_1
\ball^{An}_2$.
\end{lemma}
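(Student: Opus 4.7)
The plan is to reinterpret $\tilde K_\omega \cap X$ as the image of an $\ell_1$-ball intersected with a random Gaussian kernel under an independent random linear map, and to combine two standard Gaussian concentration bounds.

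Set $h_i = P_{X^\perp}\tilde g_i$, so the $h_i$'s are independent $n$-standard Gaussians in $X^\perp$, independent of the $g_i$'s. Let $G$ (resp.\ $H$) denote the matrix whose columns are $g_i$ (resp.\ $h_i$) in an orthonormal basis of $X$ (resp.\ $X^\perp$); both matrices have i.i.d.\ $N(0,1)$ entries, $G$ is of format $An\times(1+A)n$ and $H$ of format $n\times(1+A)n$, and they are independent. Any $z = \sum_i a_i\tilde g_i \in \tilde K_\omega$ with $\|a\|_1 \leq 1$ lies in $X$ precisely when $Ha=0$, in which case $z=Ga$. Consequently
$$
\tilde K_\omega \cap X = G\bigl(\ker H \cap \ball^{(1+A)n}_1\bigr),
$$
so the desired inclusion $\tilde K_\omega \cap X \subset C_1\ball^{An}_2$ amounts to
$$
\|Ga\|_2 \leq C_1 \qquad \text{for every } a \in \ker H \text{ with } \|a\|_1 \leq 1.
$$

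I would prove this via two independent high-probability events. First, the classical singular-value bound for Gaussian matrices (Davidson--Szarek, or a direct $\ep$-net argument) gives $\|G\|_{\ell^{(1+A)n}_2 \to \ell^{An}_2} \leq \bigl(\sqrt{1+A}+\sqrt A+1\bigr)\sqrt n$ with probability at least $1-\exp(-n/2)$. Second, since $H$ has i.i.d.\ Gaussian entries, $\ker H$ is a rotation-invariant random subspace of $\R^{(1+A)n}$ of codimension $n$; the Kashin--Garnaev--Gluskin estimate for the Gelfand widths of $\ball^{(1+A)n}_1$ in $\ell^{(1+A)n}_2$ (in its random-subspace form, derivable from Gordon's escape-through-the-mesh inequality or from the Chevet-type arguments already used in this section) furnishes a constant $\kappa_A$ such that, for $n$ large enough depending on $A$,
$$
\prob_\omega\bigl(\|a\|_2 \leq \kappa_A\,n^{-1/2}\|a\|_1 \text{ for all } a \in \ker H\bigr) \geq 1-2\exp(-n/2).
$$

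Let $\is_1$ be the intersection of these two events; a union bound yields $\prob_\omega(\is_1) \geq 1 - 3\exp(-n/2)$. On $\is_1$, every $a \in \ker H$ with $\|a\|_1 \leq 1$ satisfies $\|Ga\|_2 \leq \|G\|\,\|a\|_2 \leq \bigl(\sqrt{1+A}+\sqrt A+1\bigr)\sqrt n \cdot \kappa_A n^{-1/2} =: C_1$, a constant depending only on $A$. The only step requiring actual work is the Kashin-type bound on $\ker H$; the rest is standard Gaussian concentration, and the probability bookkeeping is routine.
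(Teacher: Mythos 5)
Your argument is correct in its essentials but takes a genuinely different route from the paper. The identity $\tilde K_\omega\cap X=G\bigl(\ker H\cap \ball_1^{(1+A)n}\bigr)$, with $G=P_X\Gamma_\omega$ and $H=P_{X^\perp}\Gamma_\omega$ independent Gaussian matrices, is valid (any representation $a$ of a point $z\in\tilde K_\omega$ satisfies $Ha=P_{X^\perp}z$, so $z\in X$ iff $Ha=0$, and then $z=Ga$), and splitting the desired inclusion into the operator-norm bound for $G$ and the Kashin--Garnaev--Gluskin estimate for the random codimension-$n$ subspace $\ker H$ does yield the lemma. The paper instead keeps $\tilde K_\omega$ whole: it uses rotation invariance of the Gaussian ensemble to replace the fixed section $X$ by a Haar-random $U(X)$, produces \emph{one} good section of codimension $n/2$ deterministically (the image under $\Gamma_\omega$ of a fixed Kashin subspace, controlled via $\|\Gamma_\omega\|$), and then invokes \cite[Theorem 2.4]{LPT06} to pass from one good section to almost all sections of codimension $n$. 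Your route is more direct and self-contained --- it avoids the double randomization over $(U,\omega)$ and the LPT06 machinery entirely, and it does not even use that $n$ is even --- at the price of needing the random-subspace (rather than existential) form of the Kashin theorem. One caveat: the failure probability you assign to the event $\{\|a\|_2\le\kappa_A n^{-1/2}\|a\|_1 \text{ on }\ker H\}$, namely $2\exp(-n/2)$, is optimistic; Gordon's escape theorem or the standard net arguments give $1-C\exp(-cn)$ with a universal $c$ strictly smaller than $1/2$ for sections of codimension $n$ in $\R^{(1+A)n}$. This only perturbs the constants in the exponent of the lemma's probability bound (to $1-C\exp(-cn)$), which is harmless for every use of the lemma in the proof of Theorem \ref{thm:vol_rat}, but as literally stated the bound $1-3\exp(-n/2)$ is not quite delivered by the tools you cite.
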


\begin{proof}
Let ${\mathcal{U}}$ be the group of unitary operators on
$\R^{(1+A)n}$, with its normalized Haar measure. For $\gau =
(\tilde{g}_i)$, let $U \gau = (U \tilde{g}_i)$. It is well known
(see e.g. \cite[Proposition V.1.1]{MaPis})
that the distributions $(U \gau_\omega)_{U \in {\mathcal{U}},
\omega \in \Omega}$ and $(\gau_\omega)_{\omega \in \Omega}$ are
the same. Define the set $\is_1^\prime$ of all pairs $(U,\omega)$
for which $\tilde{K}_\omega \cap U(X) \subset C_1 \ball^{An}_2$.
Then $\prob_\omega(\is_1) = \prob_{\omega,U}(\is_1^\prime)$. For
any $\omega$, let $\is_{1 \omega}^\prime$ be the set of all $U \in
{\mathcal{U}}$ for which $(\omega, U) \in \is_1^\prime$. It
suffices to show that
\begin{equation}
\label{eq:prob} \prob_\omega \big( \prob_U (\is_{1 \omega}^\prime)
 \geq 1 - 2 \cdot \exp(-n/2) \big) \geq 1 - 2^{-n} .
\end{equation}

Consider the set ${\mathcal{F}}$ of all $\omega$ for which there
exists a subspace $F$ of codimension $n/2$ in $\R^{(1+A)n}$, so
that
$$
F \cap \tilde{K}_\omega \subset F \cap C_1^\prime
\ball^{(1+A)n}_2,
$$
where $C_1^\prime$ is a constant (depending only on $A$). By
\cite[Theorem 2.4]{LPT06}, if $\omega \in {\mathcal{F}}$, then
$\prob_U (\is_{1 \omega}^\prime)
 \geq 1 - 2 \cdot \exp(-n/2)$ if
$C_1 = C_1^\prime ( \kappa A)^{3/2}$, where $\kappa$ is a
universal constant. To prove \eqref{eq:prob}, we need to show that
$\prob_\omega({\mathcal{F}}) \geq 1 - 2^{-n}$.\medskip

To establish the last inequality, consider the (random) operator
$\Gamma_\omega$, mapping $e_i$ ($1 \leq i \leq (1+A)n$) to
$\tilde{g}_{i,\omega}$. It is well known (see \cite[Lemma
2.8]{Sza90}) that there exists an absolute constant $\lambda > 0$
so that
$$
\prob_\omega(\|\Gamma_\omega\| \geq \lambda \sqrt{(1+A)n}) \leq
\exp(-(1+A)n)
$$
for sufficiently large $n$ (here we consider $\Gamma_\omega$ as an
operator $\ell_2^{(1+A)n}\mapsto \ell_2^{(1+A)n}$).
\medskip

On the other hand, by the well-known Kashin decomposition
\cite{Kas77} (see also \cite{Sza78} and \cite[Theorem
6.1]{Pis89}), there exists a subspace $G \subset \R^{(1+A)n}$, of
codimension $n/2$, so that
$$
\sqrt{(1+A)n}\, \ball_1^{(1+A)n} \cap G \subset 20^{2(1+A)}
\ball_2^{(1+A)n}.
$$
In fact, most subspaces of given (proportional) codimension have
this property, but one subspace is enough for us. If $\omega$
satisfies $\|\Gamma_\omega\| \leq \lambda \sqrt{(A+1)n}$, we let
$F = \Gamma_\omega(G)$. Note that $\Gamma_\omega$ maps
$\ball_1^{(1+A)n}$ onto $\tilde{K}_\omega$, hence $F \cap
\tilde{K}_\omega \subset F \cap C_1^\prime \ball^{(1+A)n}_2$ for
$C_1^\prime = \lambda 20^{2(1+A)}$.
\end{proof}

Keeping the notation of Lemma \ref{lem:kashin}, we obtain:

\begin{corollary}\label{cor:d_n^a}
For any $\omega \in \is_1$, $d_n^a(K_\omega) \leq C_1$,
where $C_1$ is the constant from Lemma \ref{lem:kashin}.
\end{corollary}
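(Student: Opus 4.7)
The plan is to reduce the problem to bounding a Gelfand number. I would define $u : \ell_1^{(1+A)n} \to X$ by $u(e_i) = g_i$; then $K_\omega = u(\ball(\ell_1^{(1+A)n}))$, so $d_n^a(K_\omega) = d_n^a(u)$, and by Lemma~\ref{L:AbsGel} it suffices to prove $c_n(u) \leq C_1$. In other words, I need to exhibit a subspace $E \subset \ell_1^{(1+A)n}$ with $\codim E \leq n$ on which $\|u|_E\| \leq C_1$.

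To construct $E$, I would lift $u$ to the operator $\tilde u : \ell_1^{(1+A)n} \to \tilde X$ given by $\tilde u(e_i) = \tilde g_i$, so that $u = P_X \tilde u$ and $\tilde K_\omega = \tilde u(\ball(\ell_1^{(1+A)n}))$. Let $P_Y = I - P_X$ be the orthogonal projection of $\tilde X$ onto $Y := \ker P_X$; note that $\dim Y = (1+A)n - An = n$. Set $E := \ker(P_Y \tilde u)$. Since the range of $P_Y \tilde u$ lies in the $n$-dimensional space $Y$, we have $\codim E \leq n$.

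For any $z \in E$ with $\|z\|_1 \leq 1$, one has $P_Y \tilde u(z) = 0$, so $\tilde u(z) \in X$ and therefore $\tilde u(z) \in \tilde K_\omega \cap X$. Invoking $\omega \in \is_1$ yields $\|\tilde u(z)\|_2 \leq C_1$. Because $\ball_2^{An}$ is the maximum volume ellipsoid inscribed in $\ball(X)$, the inclusion $\ball_2^{An} \subset \ball(X)$ gives $\|\cdot\|_X \leq \|\cdot\|_2$ on $X$. Combining these observations, $\|u(z)\|_X = \|\tilde u(z)\|_X \leq \|\tilde u(z)\|_2 \leq C_1$. This produces the desired bound $\|u|_E\| \leq C_1$, hence $c_n(u) \leq C_1$, and the corollary follows.

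No serious obstacle is expected here. The mild subtlety is recognizing $\ker(P_Y \tilde u)$ as the correct codimension-$n$ subspace: it is precisely the preimage of $X$ under $\tilde u$, on which the output lands in $\tilde K_\omega \cap X$ and therefore inherits the Euclidean bound from Lemma~\ref{lem:kashin}; the inscribed-ellipsoid hypothesis is then what allows us to pass from the Euclidean norm back to the norm of $X$.
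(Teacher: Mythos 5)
Your proof is correct, but it takes a different route from the paper's. You reduce the statement to a Gelfand-number bound: writing $K_\omega = u(\ball(\ell_1^{(1+A)n}))$ with $u(e_i)=g_i$, you exhibit the codimension-$\le n$ subspace $E=\ker(P_Y\tilde u)$ (the preimage of $X$ under the lift $\tilde u$), observe that $\tilde u$ maps $\ball(\ell_1^{(1+A)n})\cap E$ into $\tilde K_\omega\cap X\subset C_1\ball_2^{An}\subset C_1\ball(X)$, and then invoke Lemma~\ref{L:AbsGel} ($d_n^a(u)\le c_n(u)$) to conclude. The paper instead constructs an explicit ambient superspace: it renorms $\R^{(1+A)n}$ so that its unit ball is $\conv(C_1^{-1}\tilde K_\omega\cup\ball(X))$, checks that $X$ embeds isometrically, and approximates $K_\omega$ by the $n$-dimensional subspace $X^\perp=\ker P_X$ directly, using the lift $\tilde x\in C_1^{-1}\tilde K_\omega\cap P_X^{-1}(x)$. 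The two arguments exploit the same geometric facts (the $n$-dimensionality of $\ker P_X$ and the definition of $\is_1$), but yours outsources the construction of the ambient space to the general machinery of Section~\ref{s:compare} (whose proof of Lemma~\ref{L:AbsGel} passes through an $\ell_\infty(I)$-embedding), whereas the paper's is self-contained and produces a concrete finite-dimensional superspace of dimension $(1+A)n$. Your version is shorter given the earlier lemma; the paper's makes the witnessing superspace explicit. Both are complete; the only points worth stating explicitly in your write-up are that $u(z)=\tilde u(z)$ for $z\in E$ (since $\tilde u(z)\in X$) and that the rank-nullity theorem gives $\codim E\le\dim Y=n$, both of which you essentially have.
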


\begin{proof} Let $\tilde{X}$ be the normed space defined as
$\R^{(1+A)n}$ with the norm whose unit ball is $\ball(\tilde{X}) =
\conv(C_1^{-1} \tilde{K}_\omega \cup \ball(X))$. Clearly,
$\ball(\tilde{X}) \cap X = \ball(X)$, hence the embedding of $X$
into $\tilde{X}$ is isometric.

On the other hand, $d_n(C_1^{-1} K_\omega, \tilde{X}) \leq 1$. In
fact, the space $X^\perp = \ker P_X$ (the orthogonal complement of
$X$ in $\tilde{X}$) is $n$-dimensional. In addition, for any $x
\in C_1^{-1} K_\omega$ there exists $\tilde{x} \in C_1^{-1}
\tilde{K}_\omega \cap P_X^{-1}(x)$. Therefore, $x - \tilde{x} \in
X^\perp$, and $\|\tilde{x}\|_{\tilde{X}} \leq 1$. Thus,
$d_n(C_1^{-1} K_\omega, \tilde{X}) \leq 1$.
\end{proof}

Thus, with overwhelming probability, $d_n^a({K}_\omega) \leq C_1$.
We shall show that, with overwhelming probability,
$d_n(K_\omega,X) \ge 4 n^{\sigma - \gamma}$.
\medskip

The following easy observation provides a useful tool for us. If
$E$ is a subspace of $X$, denote by $P_E$ the orthogonal
projection from $X$ (or $\tilde{X}$) onto $E$. We shall view $E$
as equipped with the norm whose unit ball $\ball(E) =
P_E(\ball(X))$.

\begin{lemma}\label{lem:criterion}
Suppose $S$ is a subset of $X$. Then $d_m(S, X) \geq c$ if and
only if for every $E \subset X$ with $\codim E = m$, we have
$P_E(S) \nsubseteq c \ball(E)$.
\end{lemma}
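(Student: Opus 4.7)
My plan is to exploit the duality between an $m$-dimensional subspace $F \subset X$ and its orthogonal complement $E = F^\perp$ of codimension $m$, using the Euclidean structure coming from the maximal-volume ellipsoid of $\ball(X)$. First I would note that in the infimum defining $d_m(S,X)$ one may restrict to subspaces with $\dim F = m$: if $\dim F < m$, enlarging $F$ to an $m$-dimensional superspace $F'$ only decreases $E(\cdot, F')$. Under this restriction the assignment $F \leftrightarrow F^\perp$ is a bijection onto the $m$-codimensional subspaces of $X$.

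The core identification is that $X/F$ is isometric to $E$ equipped with the norm whose unit ball is $P_E(\ball(X))$, via the map $\bar P_E : X/F \to E$ induced by the orthogonal projection $P_E : X \to E$ (an algebraic bijection, since $\ker P_E = F$). For $y \in E$,
\[
\|y\|_E \;=\; \inf\{\|x\|_X : P_E x = y\} \;=\; \inf_{f \in F}\|y-f\|_X \;=\; \|y + F\|_{X/F},
\]
where the first equality is the definition of the Minkowski functional of $P_E(\ball(X))$, and the second uses that $P_E^{-1}(y) = y + F$ since $\ker P_E = F$. Consequently $\|P_E(x)\|_E = E(x, F)$ for every $x \in X$.

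Combining these, $\sup_{s \in S} E(s, F) \le c$ holds if and only if $P_E(S) \subseteq c\,\ball(E)$; equivalently, $P_E(S) \nsubseteq c\,\ball(E)$ iff $\sup_{s \in S} E(s,F) > c$. Taking the infimum over $F$ (equivalently, quantifying over $E$) delivers the claimed equivalence, modulo the standard interchange of strict and non-strict inequalities inherent to working with an infimum. The only delicate point I anticipate is the isometric identification in the displayed equation; once that is in hand, the lemma is a direct unwinding of definitions.
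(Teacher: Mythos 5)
Your proposal is correct and follows essentially the same route as the paper, which dispatches the lemma in one line by observing that the orthogonal complement of an $E$ with $P_E(S) \subseteq c\,\ball(E)$ witnesses $d_m(S,X) \leq c$; you simply make explicit the underlying isometric identification of $(E, \|\cdot\|_E)$ with $X/E^\perp$ and the resulting identity $\|P_E(x)\|_E = E(x, E^\perp)$. The strict-versus-non-strict caveat you flag is genuinely present in the statement, but only the direction free of that issue is used in the paper, so nothing is lost.
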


\begin{proof}
The proof can be viewed as a standard exercise: the orthogonal
complement of $E$ satisfying $P_E(S) \subseteq c \ball(E)$ is a
subspace witnessing $d_m(S, X) \leq c$.\end{proof}

We have to show that, with high probability,
$P_E(\tilde{K}_\omega) \nsubseteq C_2 n^{\sigma - \gamma}
\ball(E)$ holds for any $E$ of dimension $(A-1)n$ and some $C_2$,
when $n$ is large enough. Note that $P_E(\tilde{K}_\omega)$ is the
absolute convex hull of the vectors $g_{E,i} := P_E g_i = P_E
\tilde{g}_i$ ($1 \leq i \leq (1+A)n$), which are independent
$(A-1)n$-standard Gaussians.\medskip

Our next auxiliary result is well known.
For the sake of brevity, set $\vv = \vora(X)$.

\begin{lemma}\label{lem:t-net}
For any $t \in (0,1]$, $\ball(X)$ contains a set $(x_i)_{i=1}^N$,
with $N \leq ((1+2t^{-1}) \vv)^{An}$, so that, for every $x \in
\ball(X)$, there exists $i$ satisfying $\|x - x_i\|_2 \leq t$.
\end{lemma}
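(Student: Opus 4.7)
The plan is to carry out the standard volumetric covering argument for $\ball(X)$ in the Euclidean metric. The crucial structural input, already fixed in the section's standing normalization, is that the John (maximal-volume) ellipsoid of $\ball(X)$ equals $\ball_2^{An}$; in particular $\ball_2^{An} \subset \ball(X)$, so $\|\cdot\|_2 \leq \|\cdot\|_X$ on $X$, and small Euclidean balls sit inside correspondingly small multiples of $\ball(X)$.

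First I would pick $(x_i)_{i=1}^N \subset \ball(X)$ to be a maximal subset that is $t$-separated with respect to $\|\cdot\|_2$. By maximality this family automatically forms a Euclidean $t$-net of $\ball(X)$: any $x \in \ball(X)$ at Euclidean distance $> t$ from each $x_i$ could be adjoined, contradicting maximality. So the only real work is to bound $N$. For that, I would invoke the usual packing argument: the Euclidean balls $x_i + (t/2)\ball_2^{An}$ have pairwise disjoint interiors by $t$-separation, and each lies inside $\ball(X) + (t/2)\ball(X) = (1 + t/2)\ball(X)$, using $\ball_2^{An} \subset \ball(X)$. Comparing $An$-dimensional Lebesgue volumes then gives
\[
N \,(t/2)^{An} \vol(\ball_2^{An}) \;\leq\; (1+t/2)^{An}\,\vol(\ball(X)),
\]
so that $N \leq (1 + 2/t)^{An}\,\vv$.

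Finally, since the John ellipsoid is contained in $\ball(X)$ we have $\vv \geq 1$, and hence $(1+2/t)^{An}\,\vv \leq \bigl((1+2/t)\vv\bigr)^{An}$, which is the form of the bound asserted in the lemma. No step of this argument is substantially difficult; the whole proof is a textbook Minkowski/packing computation. The only two points to verify are the inclusion $\ball_2^{An} \subset \ball(X)$ (built into the normalization of Section~\ref{s:vr}) and the minor cosmetic rearrangement that weakens the sharper bound $(1+2/t)^{An}\,\vv$ into the symmetric form $\bigl((1+2/t)\vv\bigr)^{An}$ used in the statement.
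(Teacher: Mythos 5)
Your proof is correct and is essentially identical to the paper's: a maximal Euclidean $t$-separated subset of $\ball(X)$, disjoint balls of radius $t/2$ packed into $(1+t/2)\ball(X)$ via the John-ellipsoid inclusion $\ball_2^{An}\subset\ball(X)$, and a Lebesgue volume comparison. The only (harmless) difference is at the very end: the paper uses the standard normalized volume ratio, $\vol(\ball(X))\le \vv^{An}\vol(\ball_2^{An})$ (as in \cite{ST80,Pis89}), which yields $\bigl((1+2t^{-1})\vv\bigr)^{An}$ directly, whereas you read the displayed definition literally as $\vv=\vol(\ball(X))/\vol(\ball_2^{An})$ and then pass to the stated bound via $\vv\ge 1$ --- both routes give the lemma as stated.
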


\begin{proof}
Suppose $(x_i)_{i=1}^N$ is a maximal subset of $\ball(X)$ with the
property that $\|x_i - x_j\|_2 > t$ whenever $i \neq j$. Consider
$S = \cup_i \{x_i + t/2 \ball^{An}_2\}$ (a disjoint union of $N$
balls). Then $S \subset \ball(X) + t/2 \ball^{An}_2 \subset
(1+t/2) \ball(X)$, hence
$$
N (t/2)^{An} \vol(\ball^{An}_2) = \vol(S) \leq (1+t/2)^{An}
\vol(\ball(X)) \leq (1+t/2)^{An} \vv^{An} \vol(\ball^{An}_2) ,
$$
yielding the desired inequality.
\end{proof}

\begin{corollary}\label{cor:vol_proj}
If $E$ is a subspace of $X$ of dimension $(A-1)n$, then
$\vol(\ball(E)) \leq 3^{An} \vv^{An} \vol(\ball^{(A-1)n}_2)$.
\end{corollary}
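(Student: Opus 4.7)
The plan is to deduce the corollary directly from Lemma \ref{lem:t-net} by passing a Euclidean net of $\ball(X)$ through the orthogonal projection $P_E$. The key observation is that $P_E$ is a contraction in the Euclidean metric, so any Euclidean $t$-net of $\ball(X)$ projects to a Euclidean $t$-net of $P_E(\ball(X)) = \ball(E)$.

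More concretely, first I would apply Lemma \ref{lem:t-net} with $t = 1$ to obtain points $x_1, \ldots, x_N \in \ball(X)$ with $N \leq (3\vv)^{An}$ such that for every $x \in \ball(X)$ there is an index $i$ satisfying $\|x - x_i\|_2 \leq 1$. Next, given any $y \in \ball(E)$, write $y = P_E(x)$ for some $x \in \ball(X)$, and pick $i$ with $\|x - x_i\|_2 \leq 1$. Since $P_E$ is an orthogonal projection on $\R^{An}$, we get
$$
\|y - P_E(x_i)\|_2 = \|P_E(x - x_i)\|_2 \leq \|x - x_i\|_2 \leq 1 .
$$
Hence $\ball(E)$ is covered by the $N$ Euclidean balls $P_E(x_i) + \ball^{(A-1)n}_2$ (viewed inside the $(A-1)n$-dimensional Euclidean space $E$).

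Finally, I would bound the volume by additivity:
$$
\vol(\ball(E)) \leq \sum_{i=1}^N \vol(P_E(x_i) + \ball^{(A-1)n}_2)
 = N \cdot \vol(\ball^{(A-1)n}_2) \leq (3 \vv)^{An} \vol(\ball^{(A-1)n}_2) ,
$$
which rearranges to the claimed estimate. There is no real obstacle here; the argument is purely volumetric. The only point that deserves attention is the routine verification that projecting the net through $P_E$ preserves the covering property in the Euclidean sense, which uses nothing more than the fact that orthogonal projections are $1$-Lipschitz with respect to the Euclidean norm. No additional probabilistic input is needed at this step.
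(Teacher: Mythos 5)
Your argument is correct and is essentially identical to the paper's own proof: both apply Lemma \ref{lem:t-net} with $t=1$ to cover $\ball(X)$ by $N \leq 3^{An}\vv^{An}$ Euclidean unit balls, push the cover forward through $P_E$ using that an orthogonal projection is $1$-Lipschitz for $\|\cdot\|_2$, and conclude by counting volumes. No changes needed.
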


\begin{proof}
Suppose $(x_i)_{i=1}^N$ is as in the statement of
Lemma~\ref{lem:t-net}, with $t=1$ (hence $N \leq 3^{An}
\vv^{An}$). Then $\ball(X) \subset \cup_{i=1}^N \{x_i +
\ball^{An}_2\}$, hence
$$
\ball(E) = P_E(\ball(X)) \subset \cup_{i=1}^N \{P_Ex_i +
\ball^{(A-1)n}_2\} .
$$
Therefore, $\vol(\ball(E)) \leq N \vol(\ball^{(A-1)n}_2)$.
\end{proof}

\begin{lemma}\label{lem:K in E}
For any $\lambda > 0$, we have: for any $E \subset X$ of dimension
$(A-1)n$,
$$
\prob \big(P_E(K_\omega) \subset \lambda \ball(E)\big) \leq \Big(
\frac{\vv^\prime}{\sqrt{(A-1)n}} \lambda \Big)^{(A-1)(A+1)n^2} ,
$$
where $\vv^\prime = (3 \vv)^{A/(A-1)} \sqrt{e}$.
\end{lemma}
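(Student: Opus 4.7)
The plan is to reduce the event to independent single-vector events and then bound each factor by a volumetric estimate. First, because $\lambda\ball(E)$ is absolutely convex, the inclusion $P_E(K_\omega)\subset\lambda\ball(E)$ is equivalent to $P_E g_i\in\lambda\ball(E)$ for every $1\le i\le(1+A)n$. Since $E\subset X$, one has $P_E g_i = P_E P_X\tilde g_i = P_E\tilde g_i$, and by the projection property of Gaussians recorded just before Lemma~\ref{lem:kashin}, these vectors are independent standard Gaussians on the $(A-1)n$-dimensional Euclidean space $E$. By independence,
$$
\prob\big(P_E(K_\omega)\subset\lambda\ball(E)\big) = \prob\big(P_E\tilde g_1 \in\lambda\ball(E)\big)^{(1+A)n}.
$$

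Setting $d := (A-1)n$, the second step is a volumetric estimate on the single-vector probability. Bounding the standard Gaussian density by its maximum $(2\pi)^{-d/2}$ yields
$$
\prob\big(P_E\tilde g_1\in\lambda\ball(E)\big) \le (2\pi)^{-d/2}\,\lambda^d \vol(\ball(E)).
$$
Into this I would feed the bound $\vol(\ball(E))\le (3\vv)^{An}\vol(\ball_2^d)$ supplied by Corollary~\ref{cor:vol_proj}, together with the standard Stirling-type estimate $\vol(\ball_2^d)\le (2\pi e/d)^{d/2}$ coming from $\Gamma(d/2+1)\ge (d/(2e))^{d/2}$. The $(2\pi)^{d/2}$ factors cancel, and rewriting $(3\vv)^{An}=\bigl((3\vv)^{A/(A-1)}\bigr)^d$ collapses the estimate to
$$
\prob\big(P_E\tilde g_1\in\lambda\ball(E)\big) \le \Big(\frac{\vv^\prime\,\lambda}{\sqrt d}\Big)^d,
\qquad \vv^\prime=(3\vv)^{A/(A-1)}\sqrt e,
$$
which is exactly the right per-vector bound. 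Raising to the power $(1+A)n$ produces the exponent $(A-1)(A+1)n^2$ that appears in the conclusion.

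I do not anticipate a genuine obstacle: the reduction to independent single-vector events is immediate from absolute convexity of $\lambda\ball(E)$ and the orthogonal projection property of Gaussian vectors, and the volumetric step is routine once Corollary~\ref{cor:vol_proj} is in hand. The only mildly delicate point is keeping the exponent bookkeeping clean so that the constant $\vv^\prime$ emerges with the asymmetric exponent $A/(A-1)$ on $3\vv$; it is cleanest to parametrize everything in terms of $d=(A-1)n$ from the start and only at the end rewrite $An = \tfrac{A}{A-1}d$ to distribute the factor of $(3\vv)^{An}$ uniformly across the $d$ copies.
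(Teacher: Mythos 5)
Your proof is correct and follows essentially the same route as the paper's: reduce to independent single-vector events via absolute convexity, then apply a Gaussian small-ball volumetric bound combined with Corollary~\ref{cor:vol_proj}. The only cosmetic difference is that the paper cites the estimate $\prob(g\in\lambda\ball(E))\le e^{d/2}\vol(d^{-1/2}\lambda\ball(E))/\vol(\ball_2^d)$ from [MT03, Fact 1], whereas you rederive the same bound directly from the Gaussian density maximum and the Stirling estimate for $\vol(\ball_2^d)$.
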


\begin{proof}
Recall that $P_E(K_\omega)$ is the absolute convex hull of
$(1+A)n$ independent $(A-1)n$-standard Gaussian vectors $g_{E,i}$.
Thus,
$$
\prob \big(P_E(K_\omega) \subset \lambda \ball(E) \big) = \Big(
\prob \big( g \in \lambda \ball(E) \big) \Big)^{(1+A)n} ,
$$
where $g$ is a $(A-1)n$-standard Gaussian vector. By \cite[Fact
1]{MT03},
$$  \eqalign{
\prob(g \in \lambda \ball(E)) & \leq e^{(A-1)n/2}
\vol\big(((A-1)n)^{-1/2} \lambda
\ball(E)\big)/\vol(\ball^{(A-1)n}_2) \cr & \leq \Big(
\frac{e}{(A-1)n} \Big)^{(A-1)n/2} (3\vv)^{An} \lambda^{(A-1)n} . }
$$ Therefore,
$$
\prob \big(P_E(K_\omega) \subset \lambda \ball(E) \big) \leq \Big(
\frac{\vv^\prime}{\sqrt{(A-1)n}} \lambda \Big)^{(A-1)(A+1)n^2} . \, \,
\qedhere
$$
\end{proof}

Denote by ${\mathcal{E}}$ the set of all subspaces of $X$ of
dimension $(A-1)n$, equipped with the distance $\dist(E,F) = \|P_E
- P_F\|_2$. Here, for an operator $T \in B(E)$, we denote by $\|
\cdot \|_2$ its operator norm on $\ell_2^{An}$.

\begin{lemma}\label{lem:nearby}
For any $E, F \in {\mathcal{E}}$, and $x \in X$,
$$
\|P_F x\|_F \leq
 \|P_E x\|_E + (\|P_E x\|_E \sqrt{An} + \|x\|_2) \|P_E - P_F\|_2 .
$$
\end{lemma}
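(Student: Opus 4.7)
My plan is to bound $\|P_F x\|_F$ by producing an explicit lift in $X$ of $P_F x$ whose $X$-norm is close to $\|P_E x\|_E$. The definition $\ball(F) = P_F(\ball(X))$ gives
\[
\|y\|_F = \inf\{\|z\|_X : z \in X, \, P_F z = y\}
\]
for every $y \in F$, with the infimum attained by finite-dimensional compactness. In particular, $\|y\|_F \leq \|y\|_X$ for every $y \in F$.

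Given $\varepsilon > 0$, I would pick $z \in X$ with $P_E z = P_E x$ and $\|z\|_X \leq \|P_E x\|_E + \varepsilon$. The crucial observation is that $x - z \in \ker P_E = E^\perp$, so $P_F(x-z) = (P_F - P_E)(x-z)$. Writing $P_F x = P_F z + P_F(x-z)$, the first summand contributes $\|P_F z\|_F \leq \|z\|_X \leq \|P_E x\|_E + \varepsilon$.

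For the second summand I would pass through the Euclidean norm. The setup of Theorem~\ref{thm:vol_rat} fixes $\ball^{An}_2$ as the maximal-volume ellipsoid of $\ball(X) \subset \R^{An}$, so John's theorem gives $\ball^{An}_2 \subset \ball(X) \subset \sqrt{An}\,\ball^{An}_2$, whence $\|\cdot\|_X \leq \|\cdot\|_2 \leq \sqrt{An}\,\|\cdot\|_X$ on $X$; in particular $\|y\|_F \leq \|y\|_2$ for $y \in F$. Combining this with the operator-norm inequality $\|(P_F - P_E)(x-z)\|_2 \leq \|P_E - P_F\|_2 \|x - z\|_2$ and the estimates $\|x-z\|_2 \leq \|x\|_2 + \|z\|_2$ and $\|z\|_2 \leq \sqrt{An}\,\|z\|_X$, I obtain
\[
\|P_F(x-z)\|_F \leq \|P_E - P_F\|_2 \bigl(\|x\|_2 + \sqrt{An}\,\|z\|_X\bigr).
\]
Adding the two contributions and letting $\varepsilon \to 0$ produces the claimed inequality.

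I do not expect any serious obstacle; the step requiring attention is the choice of a lift $z$ adapted to $E$ rather than to $F$, which is exactly what makes the error collapse as $P_F(x-z) = (P_F - P_E)(x-z)$. Everything else is an application of John's theorem (built into the setup) together with the triangle inequality.
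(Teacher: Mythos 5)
Your proposal is correct and follows essentially the same route as the paper: the paper also decomposes $x = x_1 + x_2$ with $x_1$ a norm-minimal lift of $P_E x$ (your $z$) and $x_2 \in E^\perp$, uses $P_F x_2 = (P_F - P_E)x_2$, and controls $\|x_2\|_2$ via John's theorem exactly as you do. The only cosmetic difference is your $\varepsilon$-approximation of the infimum, which, as you note, is unnecessary since the lift of minimal norm is attained by compactness.
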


\begin{proof}
For simplicity, let $a = \|P_E x\|_E$, and $b = \|x\|_2$. By the
definition of the norm on $E$, we can write $x = x_1 + x_2$, with
$x_1 \in a \ball(X)$, and $x_2 \in E^\perp$. Recall that
$\ball^{An}_2$ is the maximal volume ellipsoid contained in
$\ball(X)$, hence, by the well known theorem of F.~John (see
\cite[p.~10]{MS86}), $\ball(X) \subset \sqrt{An}\, \ball^{An}_2$.
Therefore, $\|x_2\|_2 \leq \|x_1\|_2 + \|x\|_2 \leq a \sqrt{An} +
b$. We have
$$
P_F x = P_F x_1 + P_F x_2 = P_F x_1 + (P_F - P_E) x_2 .
$$
Thus,
$$
\|P_F x\|_F \leq \|P_F x_1\|_F + \|(P_F - P_E) x_2\|_2 \leq a +
\|P_F - P_E\|_2 \|x_2\|_2 \leq a + \|P_F - P_E\|_2 (a \sqrt{An} +
b) .
$$
\end{proof}

\begin{corollary}\label{cor:nearby}
Suppose $E \in {\mathcal{E}}$ and $\omega$ are such that
$$
P_E(K_\omega) \subset a \ball(E) ,
$$
and
$$
\max_{1 \leq i \leq (A+1)n} \|\tilde{g}_i\|_2 \leq b \sqrt{An} .
$$
Then, for any $F \in {\mathcal{E}}$,
$$
P_F(K_\omega) \subset \big(a + \|P_F - P_E\|_2 (a+b) \sqrt{An}
\big) \ball(F) .
$$
\end{corollary}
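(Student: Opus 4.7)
The plan is to deduce the corollary directly from Lemma \ref{lem:nearby} applied pointwise to $y \in K_\omega$. Two ingredients are needed for each such $y$: a bound on $\|P_E y\|_E$ and a bound on $\|y\|_2$. The first is immediate from the first hypothesis, namely $\|P_E y\|_E \le a$. The second requires a short argument using the second hypothesis and the definition of $K_\omega$.

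More precisely, recall that $K_\omega = \absconv(g_1,\dots,g_{(1+A)n})$ with $g_i = P_X \tilde g_i$. Since $P_X$ is an orthogonal projection, $\|g_i\|_2 \le \|\tilde g_i\|_2 \le b\sqrt{An}$ by hypothesis. Thus if $y = \sum_i \alpha_i g_i$ with $\sum_i |\alpha_i| \le 1$, the triangle inequality gives
$$
\|y\|_2 \le \sum_i |\alpha_i|\,\|g_i\|_2 \le b\sqrt{An}.
$$
Combined with $\|P_E y\|_E \le a$, Lemma \ref{lem:nearby} yields
$$
\|P_F y\|_F \le a + \bigl(a\sqrt{An} + b\sqrt{An}\bigr)\|P_E-P_F\|_2 = a + (a+b)\sqrt{An}\,\|P_F-P_E\|_2,
$$
which is exactly the claimed inclusion, since $y \in K_\omega$ was arbitrary.

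There is essentially no obstacle here: the corollary is a direct packaging of Lemma \ref{lem:nearby} once one observes that the Euclidean norms of the extreme points of $K_\omega$ control the Euclidean norm of every element of $K_\omega$. The only point worth noting explicitly is the passage from $\|\tilde g_i\|_2$ to $\|g_i\|_2$ via the contractivity of $P_X$, so that the hypothesis on $\tilde g_i$'s (stated in terms of the ambient space $\tilde X$) can be used against elements of $K_\omega \subset X$.
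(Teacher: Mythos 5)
Your proof is correct and is exactly the argument the paper intends: the corollary is stated without proof as an immediate consequence of Lemma \ref{lem:nearby}, obtained by bounding $\|P_E y\|_E \le a$ and $\|y\|_2 \le \max_i \|g_i\|_2 \le \max_i \|\tilde g_i\|_2 \le b\sqrt{An}$ for $y$ in the absolute convex hull, then substituting into the lemma's estimate. Your explicit remark about passing from $\|\tilde g_i\|_2$ to $\|g_i\|_2$ via the contractivity of $P_X$ is a worthwhile detail the paper leaves implicit.
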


\begin{proof}[Proof of Theorem \ref{thm:vol_rat}]
Consider the set $\is_2$ of all $\omega$ for which $\|g_i\|_2 \leq
4 \sqrt{(A-1)n}$ for every $i$. By \cite[Fact 1]{MT03}, if $g$ is
an $An$-standard Gaussian, then
$$
\prob \big( \|g\|_2 > 4 \sqrt{(A-1)n} \big) \leq \big( \sqrt{2}
e^{-4(A-1)/A} \big)^{An} ,
$$
hence
\begin{equation}
\prob(\is_2) \geq 1 - (A+1) n \big( \sqrt{2} e^{-4(A-1)/A}
\big)^{An} \geq 1 - e^{-2(A+1)n}
\label{eq:S_2}
\end{equation}
for $n$ large enough (recall that $A \geq 5$).

We shall prove that, for $n$ large enough, there exists $\omega
\in \is_1 \cap \is_2$, with the property that $P_E(K_\omega)
\nsubseteq C \ball(E)$ for any $E \in {\mathcal{E}}$, where $C = 4
n^{\sigma - \gamma}$ ($\is_1$ is defined as in Lemma \ref{lem:kashin}).

Let $t = (An)^{-1/2}$. By \cite{Sza81} (see also \cite[Proposition 6]{Paj99}),
${\mathcal{E}}$ has a $t$-net ${\mathcal{E}}^\dagger$, of
cardinality not exceeding $(C_3/t)^{(A-1)n^2}$, where $C_3$ is a
universal constant. Suppose $P_E(K_\omega) \subset C \ball(E)$,
for some $E$. Find $F \in {\mathcal{E}}^\dagger$ so that $\|P_E -
P_F\|_2 \leq t$. By Corollary \ref{cor:nearby}, $P_F(K_\omega)
\subset (2C + 4) \ball(F)$.

Denote by $\is_{3,F}$ the set of all $\omega \in \is_2$ for which
$P_F(K_\omega) \subset (2C + 4) \ball(F)$, and let $\is_3 =
\cup_{F \in {\mathcal{E}}^\dagger} \is_{3,F}$. For a given $F$,
Lemma \ref{lem:K in E} yields
$$
\prob(\is_{3,F})
\leq \Big( \frac{\vv^\prime}{\sqrt{(A-1)n}} (2C+4)
\Big)^{(A-1)(A+1)n^2} \leq \Big( \frac{\vv^\prime}{\sqrt{An}} 3C
\Big)^{(A-1)(A+1)n^2}
$$
Thus,
$$  \eqalign{
& \prob(\is_3) \leq |{\mathcal{E}}^\dagger| \Big(
\frac{\vv^\prime}{\sqrt{An}} 3C \Big)^{(A-1)(A+1)n^2} \cr & \leq
\big( C_3 \sqrt{An} \big)^{(A-1)n^2} \Big(
\frac{\vv^\prime}{\sqrt{An}} 3C \Big)^{(A-1)(A+1)n^2} \cr & =
\Big( C_3 (An)^{-A/2} \big( 3 \vv^\prime C \big)^{A+1}
\Big)^{(A-1)n^2} . }
$$
Note that $C^{A+1} = 4^{A+1} n^{(\sigma - \gamma)(A+1)}$, and
$\vv^{\prime (A+1)} \leq n^{\gamma A(A+1)/(A-1)}$. By our choice
of $A$,
$$
\frac{A}{2} > (\sigma - \gamma)(A+1) + \gamma
\frac{A(A+1)}{A-1} .
$$
and therefore, $\prob(\is_3) \leq (C_4 n)^{-C_5 n^2}$, where
$C_4$ and $C_5$ are positive constants.

On the other hand, combining Lemma \ref{lem:kashin} with \eqref{eq:S_2}, we
obtain, for $n$ large enough,
$$
\prob(\is_1 \cap \is_2) \geq 1 - 3 e^{-n/2} - e^{-2(A+1)n} .
$$
Thus, for large $n$, $\prob(\is_3) < \prob(\is_1 \cap \is_2)$.
Thus, there exists $\omega \in \is_1 \cap \is_2$, so that
$P_E(K_\omega) \nsubseteq C \ball(E)$, for any $E$. By
Lemma~\ref{lem:criterion}, we are done.
\end{proof}

To prove Theorem \ref{thm:unif_bded_VR}, we need also the
following lemma.

\begin{lemma}\label{lem:subsp}
Suppose $X$ is an $m$-dimensional space. Then, for any $k \leq m$,
there exists a $k$-dimensional subspace $Y$, so that $\dim Y = k$,
and $\vora(Y) \leq \vora(X)$.
\end{lemma}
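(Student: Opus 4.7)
The plan is to reduce to Euclidean geometry and then use an averaging argument over the Grassmannian. After applying a linear transformation, I may assume that the John (maximum-volume inscribed) ellipsoid of $K := \ball(X) \subseteq \R^m$ is the Euclidean unit ball $\ball_2^m$, so that $\vora(X) = \vol(K)/\omega_m$ (where $\omega_j$ denotes the volume of $\ball_2^j$) and the radial function $\rho_K$ satisfies $\rho_K \geq 1$ on $S^{m-1}$. For any $k$-dimensional subspace $Y \subseteq X$ endowed with the inherited norm, $\ball(Y) = K \cap Y$ contains $\ball_2^m \cap Y$ as an inscribed ellipsoid of volume $\omega_k$; hence $\vora(Y) \leq \vol_k(K \cap Y)/\omega_k$, and it suffices to exhibit $Y$ with $\vol_k(K \cap Y) \leq (\omega_k/\omega_m)\vol(K)$.

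To produce such a $Y$, I would average the slice volume over the Grassmannian $G(m,k)$ with respect to its Haar probability measure $\mu$. Polar coordinates inside $Y$ give $\vol_k(K \cap Y) = \frac{1}{k} \int_{S^{k-1}(Y)} \rho_K^k \, d\sigma_Y$, where $\sigma_Y$ is the (unnormalized) surface measure on the unit sphere of $Y$. Combining this with the standard disintegration identity --- a uniformly random unit vector in a uniformly random $k$-subspace is uniform on $S^{m-1}$, so that for any $f$,
$$
\int_{G(m,k)} \int_{S^{k-1}(Y)} f \, d\sigma_Y \, d\mu(Y) = \frac{k\omega_k}{m\omega_m} \int_{S^{m-1}} f \, d\sigma
$$
--- yields
$$
\int_{G(m,k)} \vol_k(K \cap Y) \, d\mu(Y) = \frac{\omega_k}{m \omega_m} \int_{S^{m-1}} \rho_K^k \, d\sigma .
$$
Since $\rho_K \geq 1$ pointwise, $\rho_K^k \leq \rho_K^m$, so the right-hand side is bounded above by $\frac{\omega_k}{m\omega_m} \int_{S^{m-1}} \rho_K^m \, d\sigma = \frac{\omega_k}{\omega_m} \vol(K)$. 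Hence some $Y \in G(m,k)$ attains the required slice bound.

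The only step needing care is the integral-geometric identity relating spherical integrals over random $k$-subspaces to integrals over $S^{m-1}$; I would either quote it from a standard reference on Grassmannian integration or derive it directly from the bi-invariance of Haar measure on the orthogonal group via Fubini. Everything else is an immediate consequence of the inclusion $\ball_2^m \subseteq K$ (which forces $\rho_K \geq 1$) together with the inequality $\vora(K \cap Y) \leq \vol_k(K \cap Y)/\vol_k(\ball_2^m \cap Y)$ arising from comparing the true John ellipsoid of $K \cap Y$ with its inscribed sub-ellipsoid $\ball_2^m \cap Y$.
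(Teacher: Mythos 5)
Your proof is correct and follows essentially the same route as the paper's: put $\ball(X)$ in John position, express the volume ratio as a spherical integral of $\rho_K^m=\|\cdot\|^{-m}$, disintegrate over the Grassmannian, and use $\rho_K\geq 1$ to pass from the $k$-th to the $m$-th power before selecting a good subspace $Y$. If anything, you are slightly more careful than the paper in observing that $\vora(Y)\leq \vol_k(K\cap Y)/\omega_k$ is only an inequality (since the John ellipsoid of a section need not be the section of the John ellipsoid), which is all that is needed.
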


\begin{proof}
Denote the norm of $X$ by $\| \cdot \|$. Without loss of
generality, the maximal volume ellipsoid inscribed into $\ball(X)$
is the Euclidean ball. By e.g. \cite[Section 6]{Pis89},
$$
\vora(X) = \int_{{\mathbf S}^{m-1}} \|x\|^{-m} \, d\sigma_{m-1} ,
$$
where $\sigma_{m-1}$ is the uniform probability measure on the
unit sphere ${\mathbf S}^{m-1}$. As explained in e.g.
\cite[1.6]{MS86}, we can write
$$
\vora(X) = \int_{\mathbf{G}} \int_{{\mathbf S}^{k-1}(Y)}
\|x\|^{-m}
 \, d\sigma_{k-1} \, d\mu ,
$$
where $\mu$ is the rotation invariant probability measure on the
Grassman manifold ${\mathbf{G}}$ of $k$-dimensional subspaces $Y
\subset X$, and $\sigma_{k-1}$ is the probability measure on the
unit sphere of $Y$. Clearly, for some $Y \in {\mathbf{G}}$,
$$
\int_{{\mathbf S}^{k-1}(Y)} \|x\|^{-m}  \, d\sigma_{k-1} \leq
\vora(X) .
$$
Then
\[
\vora(Y) = \int_{{\mathbf S}^{k-1}(Y)} \|x\|^{-k}  \,
d\sigma_{k-1}\leq \int_{{\mathbf S}^{k-1}(Y)} \|x\|^{-m}  \,
d\sigma_{k-1} \leq \vora(X).\hskip1cm \qedhere
\]
\end{proof}

\begin{proof}[Proof of Theorem \ref{thm:unif_bded_VR}]
Pick $\sigma \in (\gamma, 1/2)$. As in Theorem \ref{thm:vol_rat},
find a positive integer $A \geq 5$, so that
$$
\frac{A-2}{2(A+1)} \geq \gamma \frac{A}{A-1} + (\sigma - \gamma).
$$
Now we use Lemma \ref{lem:subsp} to obtain a sequence
$\{X_n\}_{n=1}^\infty$ of uniformly complemented subspaces
so that $\dim X_n=A k_n$, where $k_n$ is even,
$\lim_{n\to\infty}k_n = \infty$), and $\vora(X_n) \leq k_n^\gamma$.
Theorem \ref{thm:vol_rat} yields, for $n$ large enough, compact sets
$K_n \subset X_n$, so that $\sup_n d_{k_n}^a(K_n) < \infty$, and
$\lim_n d_{k_n}(K_n,X) = \infty$.
\end{proof}

Can we use the techniques of Theorem \ref{thm:unif_bded_VR} for other spaces?
Below, we outline a possible approach. As in Section \ref{s:abs wi},
we use the notation
$\lambda(F)$ and $\lambda(F,G)$ for absolute and relative projection
constants. On the first step, find (when  possible) a sequence of
uniformly complemented subspaces $X_n\subset X$ such that
$\lambda(X_n)\to\infty$. The second step consists of picking a sequence $\{Y_n\}$
of superspaces $Y_n\supset X_n$ such that $\lim_n \lambda(X_n, Y_n) = \infty$,
and $k_n = \dim(Y_n/X_n)=\dim X_n/2$ (or more generally,
$\lim_n \big(\dim(Y_n/X_n)/\dim X_n\big) = \alpha \in (0,1)$).
The third step proceeds as in the proof Theorem \ref{T:DepSpace} --
namely, by selecting projections $P_n:Y_n\to X_n$ so that
$\lim_n d_{k_n}(P_n(\ball(Y_n)), X_n) = \infty$. Then we would
also have $\lim_n d_{k_n}(P_n(\ball(Y_n)), X) = \infty$
(due to the uniform complementability of $X_n$'s), and $d_{k_n}^a(K_n)\le 1$.
We believe that the possibility of implementing the second step of this program is an
interesting problem, which can find other applications as well:

\begin{problem}\label{P:ProjRel}
Suppose that finite-dimensional spaces $X_n$ are such that
$\lambda(X_n)\to\infty$. Does this imply that there exist
$Y_n\supset X_n$ such that \[\dim(Y_n/X_n)\le\dim X_n/2\quad
\hbox{ and }\quad \lambda(X_n,Y_n)\to\infty?\] The problem is of
interest if we replace $2$ by any positive constant.
\end{problem}

Problem \ref{P:ProjRel} can be considered as a problem on
possibility to generalize the isometric, one-codimensional result
of Davis \cite{Dav}.

\medskip
The possibility of making the third step is still a problem (even
if we assume that Problem \ref{P:ProjRel} has a positive answer):
Can $Y_n$ and $P_n$ be chosen in such a way that $P_n(\ball(Y_n))$
has large $k$-width in $X_n$, where $k=\dim(Y_n/X_n)$?

\begin{remark} There exist non-${\mathcal{L}}_\infty$-spaces for which the scheme above cannot be realized
because they do not contain uniformly complemented
finite-dimensional spaces with growing dimensions. One example of
this type was constructed by Pisier \cite{Pis83} (see \cite{Pis86}
for a simpler version of the construction).
\end{remark}

\section{Ratios of widths to absolute widths}\label{s:ratio}

In this section, we modify Problem \ref{P:Nspace}.

\begin{problem}\label{P:ratio}
(1) Describe the Banach spaces $\my$ which contain compact subsets $K$
so that $\limsup_n d_n(K)/d_n^a(K) = \infty$.

(2) What can be said about the Banach spaces $\my$ satisfying
a stronger property: they contain compact subsets $K$
so that $\liminf_n d_n(K)/d_n^a(K) = \infty$.
\end{problem}

To answer Part (1) of this question, we state:

\begin{proposition}\label{limsup}
Suppose a Banach space $\my$ is such that there exist $\gamma > 0$
and $\sigma \in [0,1/2)$ so that, for infinitely many positive integers $n$,
there exist operators $A_n : \ell_2^n \to \my$ and
$B_n : \my \to \ell_2^n$, so that $B_n A_n = I_{\ell_2^n}$, and
$\|A_n\| \|B_n\| \leq \gamma n^\sigma$. Then $\my$ contains a compact
subset $K$, so that
$$
\limsup d_n(K)/d_n^a(K) = \infty .
$$
\end{proposition}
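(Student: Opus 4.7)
The plan is to apply Theorem \ref{thm:vol_rat} to the Hilbert spaces $\ell_2^m$ (whose volume ratio equals $1$, corresponding to $\gamma=0$), push the resulting Gluskin--type bodies into $\my$ via the operators $A_m$, and then assemble a carefully rescaled sequence of these compacts into a single compact subset of $\my$.

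Fix $\sigma_1\in(\sigma,1/2)$ and choose $A\ge 5$ large enough that Theorem \ref{thm:vol_rat} is applicable with parameters $\gamma=0$ and exponent $\sigma_1$. From the indices $m$ furnished by the hypothesis, extract a subsequence $m_k=An_k$ with $n_k$ even, $n_k\ge N_0$, and $n_{k+1}\ge n_k^3$. After a harmless rescaling we may assume $\|A_{m_k}\|=1$, whence $\|B_{m_k}\|\le \gamma m_k^\sigma$. Applying Theorem \ref{thm:vol_rat} in each $\ell_2^{m_k}$ yields a centrally symmetric compact $L_k$ with $d_{n_k}^a(L_k)\le C_1$ and $d_{n_k}(L_k,\ell_2^{m_k})\ge 4 n_k^{\sigma_1}$; moreover, since $L_k$ is the absolute convex hull of $(A+1)n_k$ vectors each of Euclidean norm $\le 4\sqrt{(A-1)n_k}$ (by the set $\is_2$ in the proof of Theorem \ref{thm:vol_rat}), one has $R(L_k)\le C_R\sqrt{n_k}$. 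Setting $\hat K_k=A_{m_k}(L_k)\subset \my$, the identity $B_{m_k}A_{m_k}=I$ and Lemma \ref{L:ops} give
\[
d_{n_k}^a(\hat K_k)\le C_1,\qquad d_{n_k}(\hat K_k,\my)\ge \|B_{m_k}\|^{-1} d_{n_k}(L_k,\ell_2^{m_k})\ge c\, n_k^{\sigma_1-\sigma},\qquad R(\hat K_k,\my)\le C_R\sqrt{n_k}.
\]

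Set $t_k=1/n_k$ and define
\[
K \;=\; \{0\}\,\cup\, \bigcup_{k\ge 1} t_k\hat K_k \;\subset\; \my.
\]
The inequality $R(t_k\hat K_k)\le C_R/\sqrt{n_k}\to 0$ ensures that $K$ is compact, and monotonicity yields $d_{n_k}(K,\my)\ge t_k\, d_{n_k}(\hat K_k,\my)\ge c\,n_k^{\sigma_1-\sigma-1}$. For the upper bound on $d_m^a(K)$, embed $\my\hookrightarrow\ell_\infty(I)$ so that $d_m^a(K)=d_m(K,\ell_\infty(I))$, and take the approximating subspace $V_k=F_k+W_k\subset\ell_\infty(I)$, where $F_k=\span\bigl(\bigcup_{j<k} t_j\hat K_j\bigr)$ exactly spans the earlier pieces (of total dimension $\le A\sum_{j<k} n_j=O(n_k^{1/3})$ by the rapid growth of $(n_k)$), and $W_k$ is a near-optimal $n_k$-dimensional subspace for $t_k\hat K_k$ delivered by the bound $d_{n_k}^a(\hat K_k)\le C_1$. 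Thus $\dim V_k\le (1+o(1))n_k$, and the piecewise errors on $K$ are $0$ for $j<k$ (exactly spanned), at most $t_k C_1=O(1/n_k)$ for $j=k$, and at most $t_j R(\hat K_j)=O(n_j^{-1/2})=O(n_k^{-3/2})$ for $j>k$. Consequently $d^a_{(1+o(1))n_k}(K)\le O(1/n_k)$, and the ratio at $m=(1+o(1))n_k$ satisfies
\[
\frac{d_m(K,\my)}{d_m^a(K)} \;\ge\; c'\,n_k^{\sigma_1-\sigma} \;\longrightarrow\; \infty ,
\]
which forces $\limsup_n d_n(K,\my)/d_n^a(K)=\infty$.

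The single delicate point, which I expect to be the main technical obstacle, is that the lower bound on the width must be used at the slightly perturbed index $m=(1+o(1))n_k$, whereas Theorem \ref{thm:vol_rat} as stated provides it only at $m=n_k$. This is resolved by a strengthening of Theorem \ref{thm:vol_rat}: the very same Gaussian-net argument, carried out with $A$ chosen even larger so that the relevant constraint reads $(A-2-\epsilon)/(2(A+1))>\sigma_1$ for some fixed small $\epsilon>0$, produces the lower bound uniformly in the range $m\le (1+\epsilon)n_k$. Since $(1+o(1))n_k\le (1+\epsilon)n_k$ for all large $k$, this suffices to close the argument.
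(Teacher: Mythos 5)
Your construction is genuinely different from the paper's, and it contains one real gap, which you have correctly located but not closed. The paper does not use the random bodies of Theorem \ref{thm:vol_rat} at all here. Instead it takes as building blocks the Euclidean images $\tilde{K}_j = id_{12}^{(j)}\ball(\ell_1^{n(j)})$ of the $\ell_1$-balls, for which the two required estimates live at \emph{different} indices: the Garnaev--Gluskin theorem \cite{GG84} together with Lemma \ref{L:AbsGel} gives $d_{k(j)}^a(\tilde{K}_j)\le c_{k(j)}(id_{12}^{(j)})\le C_1 n(j)^{-1/2}$ at an index $k(j)$ chosen \emph{smaller} than $m(j)\approx n(j)/2$, while Pinkus's exact formula \cite[Theorem VI.2.7]{Pin85} gives $d_{m(j)}(\tilde{K}_j)>1/2$ at the larger index $m(j)$. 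The slack $m(j)-k(j)=\sum_{i<j}(\cdot)$ is budgeted in advance to absorb the span of all earlier pieces, so the upper bound on $d_{m(j)}^a(K)$ and the lower bound on $d_{m(j)}(K,\my)$ are obtained at the \emph{same} index $m(j)$ and no perturbation of any index is ever needed. The assembly itself (push into $\my$ by the factorization operators, rescale by a rapidly decreasing $\alpha_j$, let dimensions grow fast, take the convex hull, approximate by the span of the old pieces plus a near-optimal subspace for the new one) is essentially identical to yours.

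The gap in your version is exactly the ``delicate point'' you flag: your building block $L_k$ carries both the bound $d_{n_k}^a(L_k)\le C_1$ and the bound $d_{n_k}(L_k,\ell_2^{m_k})\ge n_k^{\sigma_1}$ at the \emph{same} index $n_k$, so after adding the $O(n_k^{1/3})$ dimensions of $F_k$ you are forced to invoke a lower bound on $d_m(L_k,\ell_2^{m_k})$ for $m=(1+o(1))n_k>n_k$, which Theorem \ref{thm:vol_rat} does not provide (widths decrease in the index, so monotonicity goes the wrong way). Your proposed remedy --- rerunning the Gaussian net argument over subspaces of codimension up to $(1+\ep)n$ with a larger $A$ --- is plausible, but it is an assertion about a nontrivial modification of the probabilistic estimates (the net of the Grassmannian, the small-ball bound of Lemma \ref{lem:K in E}, and the exponent bookkeeping all change), and as written your proof is incomplete without carrying it out. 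Everything else in your argument checks out: the factorization/rescaling step via Lemma \ref{L:ops}, the radius control $R(\hat K_k)\le C_R\sqrt{n_k}$ extracted from the event $\is_2$, the compactness of $\{0\}\cup\bigcup_k t_k\hat K_k$, and the dimension count for $F_k$ under $n_{k+1}\ge n_k^3$. If you want to keep your route, the cleanest repair is the paper's device: decouple the two indices by proving the absolute-width bound for $L_k$ at an index $n_k-r_k$ with $r_k=\dim F_k$ (equivalently, run Theorem \ref{thm:vol_rat} with parameter $n_k-r_k$ inside a coordinate subspace), rather than strengthening the Kolmogorov lower bound to indices beyond $n_k$. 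What the paper's choice of building block buys is precisely that this decoupling comes for free from the classical estimates; what yours would buy, if completed, is a proof that does not rely on \cite{GG84} and \cite{Pin85} but only on the machinery already developed in Section \ref{s:vr}.
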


If $\my$ is $K$-convex, then there exists a sequence of
projections $P_n$ from $\my$ onto subspaces $F_n$, where $\sup_n
\|P_n\| < \infty$, and $d(F_n, \ell_2^n) < 2$ (see \cite{Pis82} or
\cite[Theorem 19.3]{DJT}). Thus, $K$-convex spaces $\my$ satisfy
the conditions of this proposition. By \cite[Example 3.5]{FLM},
Proposition \ref{limsup} is also applicable to $\my = (\oplus_n
\ell_1^n)_{c_0}$, $(\oplus_n \ell_1^n)_\infty$, $c_0(\ell_1)$, or
$\ell_\infty(\ell_1)$.

\begin{proof}
Find a sequence $4 < n(1) < n(2) < \ldots$ so that, for any $j \in \N$,
$n(j+1) > 4 n(j)$, and there exist operators $U_j : \ell_2^{n(j)} \to \my$
and $V_j : \my \to \ell_2^{n(j)}$, so that $\|U_j\| \leq 1$, and
$\|V_j\| \leq \gamma n(j)^\sigma$. Define $m(j) = \lceil n(j)/2 \rceil$
and $k(j) = m(j) - \sum_{i=1}^{j-1} m(i)$ (note that $k(j) \geq 3 m(j)/5$).
Furthermore, set $\alpha_1 = 1$, and $\alpha_{j+1} = \alpha_j/\sqrt{n(j)}$.

Let $id_{12}^{(j)}$ be the formal identity map from $\ell_1^{n(j)}$ to
$\ell_2^{n(j)}$, and set $\tilde{K}_j = id_{12}^{(j)} \ball(\ell_1^{n(j)})$.
By \cite{GG84},
$$d_{k(j)}^a(\tilde{K}_j) \leq c_{k(j)}(id_{12}^{(j)}) < C_1 n(j)^{-1/2}$$
($C_1 > 0$ is an absolute constant).
On the other hand, by \cite[Theorem VI.2.7]{Pin85}, $d_{m(j)}(\tilde{K}_j) > 1/2$.

Let $K_j = \alpha_j A_j(\tilde{K}_j)$.
Then the set $K = \conv(K_1, K_2, \ldots)$ is compact and convex.
We claim that, for any $j$,
$d_{m(j)}(K) \geq \alpha_j \gamma^{-1} n(j)^{-\sigma}/2$,
while $d_{m(j)}^a(K) \leq C_1 \alpha_j n(j)^{-1/2}$.

To estimate $d_{m(j)}(K)$ from below,
note that $V_j(K) \supset \alpha_j^{-1} \tilde{K}_j$. By Lemma \ref{L:ops},
$$
\frac{1}{2} < d_{m(j)}(\tilde{K}_j) \leq \alpha_j^{-1} \|V_j\| d_{m(j)}(K) .
$$
As $\|V_j\| \leq \gamma n(j)^\sigma$, we obtain
$d_{m(j)}(K) \geq \alpha_j \gamma^{-1} n(j)^{-\sigma}/2$.

Next obtain an upper estimate for $d_{m(j)}^a(K)$. Embed $\my$ isometrically
into a $1$-injective Banach space $\my^\prime$ (we can take, for instance,
$\my^\prime = \ell_\infty(I)$). Find $F \subset \my^\prime$ so that
$\dim F \leq k(j)$, and $E(K_j,F) \leq C_1 \alpha_j n(j)^{-1/2}$.
Now let $G = \span[F, \ran V_1, \ldots, \ran V_{j-1}]$.
Clearly, $\dim G \leq k(j) + \sum_{i=1}^{j-1} n(i) \leq m(j)$.
We show that $E(K, G) \leq C_1 \alpha_j n(j)^{-1/2}$. By convexity, it suffices to
establish the inequality $E(x, G) \leq C_1 \alpha_j n(j)^{-1/2}$ for $x \in K_s$,
for $s \in \N$. For $s < j$, we have $x \in G$, hence $E(x,G) = 0$. For
$s = j$, $E(x, G) \leq E(x, F) < C_1 \alpha_j n(j)^{-1/2}$, by our
choice of $F$. For $s > j$,
$$
E(x,G) \leq \|x\| \leq \alpha_s \leq \alpha_{j+1} = \alpha_j n(j)^{-1/2} .
$$

Taken together, the results above yield
$d_{m(j)}(K)/d_{m(j)}^a(K) \geq \beta m(j)^{1/2-\sigma}$,
where $\beta$ is a constant.
\end{proof}

In \cite{Ost10}, a special case of the previous proposition was
established: it was proved that $\ell_2$ contains an infinite
dimensional compact $K$ for which
$\limsup_{n\to\infty}d_n(K)/d_n^a(K)=\infty$. This result leads to
the following question \cite[Problem 4.2]{Ost10}: Does there exist
an infinite-dimensional compact $K$ in some Banach space $\my$
such that \[\lim_{n\to\infty}d_n(K)/d_n^a(K)=\infty?\] Below, we
provide a positive answer.

\begin{proposition}\label{P:limit}
\begin{enumerate}
\item
Suppose $1 < p \leq 2$, and $\alpha \in (0,1/q)$, where $1/p + 1/q = 1$.
Then there exists an operator $u_p : \ell_1 \to \ell_p$, so that, for every $n$,
$$
d_n^a(u) \leq c_n(u_p) \leq \beta_{p\alpha} (1 + \log n) n^{-1/q}
\textrm{   and  }
d_n(u_p) \geq \gamma_{p\alpha} n^{-\alpha} .
$$
\item
Suppose $2 < p < \infty$, and $\alpha \in (0,1/p)$. Then there exists an operator
$u_p : \ell_1 \to \ell_p$, so that, for every $n$,
$$
d_n^a(u) \leq c_n(u_p) \leq \beta_{p\alpha} (1 + \log n) n^{-1/2}
\textrm{   and  }
d_n(u_p) \geq \gamma_{p\alpha} n^{1/p-1/2-\alpha} .
$$
\end{enumerate}
Here $\beta_{p\alpha}$ and $\gamma_{p\alpha}$ depend on $p$ and $\alpha$ only.
\end{proposition}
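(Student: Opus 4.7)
The plan is to construct $u_p$ as a block-diagonal operator that on each block factors through $\ell_2$. I would fix block sizes $N_j = 2^j$ and weights $\alpha_j = N_j^{-\alpha}$, and for each $j$ pick an isomorphic embedding $T_j : \ell_2^{N_j} \hookrightarrow \ell_p^{M_j}$ whose distortion is bounded by a constant $C_p$ depending only on $p$: with $M_j$ proportional to $N_j$ via a Khintchine/Kashin-type construction when $1 < p \le 2$, and $M_j$ of order $N_j^{p/2}$ via Dvoretzky's theorem when $2 < p < \infty$. Writing $w_j : \ell_1^{N_j} \to \ell_2^{N_j}$ for the formal identity, I would then set
\begin{equation*}
u_p \ = \ \bigoplus_{j \ge 1} \alpha_j\, T_j w_j : \Big(\bigoplus_j \ell_1^{N_j}\Big)_{\ell_1} \longrightarrow \Big(\bigoplus_j \ell_p^{M_j}\Big)_{\ell_p},
\end{equation*}
canonically identified with an operator $\ell_1 \to \ell_p$ of norm at most $C_p$.

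For the lower bound on $d_n(u_p)$, I would pick $j_0$ with $N_{j_0} \in [2n,4n]$ and compose with the norm-one coordinate projection $\pi_{j_0}$ onto the $j_0$-th block. By Lemma~\ref{L:ops}, $d_n(u_p)$ is at least $d_n(\alpha_{j_0} T_{j_0} w_{j_0} \ball(\ell_1^{N_{j_0}}), \ell_p^{M_{j_0}})$, and the $C_p$-isomorphism $T_{j_0}$ reduces this (up to the constant $C_p$) to $\alpha_{j_0}\, d_n(\ball(\ell_1^{N_{j_0}}), \ell_2^{N_{j_0}})$. The classical lower bound $d_n(\ball(\ell_1^N), \ell_2^N) \gtrsim 1$ for $n \le N/2$ from \cite[Theorem~VI.2.7]{Pin85}, together with $\alpha_{j_0} \sim n^{-\alpha}$, produces $d_n(u_p) \gtrsim n^{-\alpha}$. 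This is exactly what Part~(1) asks for, and is stronger than the bound $n^{1/p-1/2-\alpha}$ claimed in Part~(2) (since $1/p - 1/2 < 0$ when $p > 2$).

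For the upper bound on $c_n(u_p)$, I would look for a codimension-$n$ subspace $E = \bigoplus_j E_j$ with $E_j \subset \ell_1^{N_j}$ of codimension $n_j$ and $\sum_j n_j \le n$. Using $p \ge 1$ and the $C_p$-boundedness of the $T_j$, one checks directly that
\begin{equation*}
\|u_p|_E\| \ \le \ C_p \sup_j \alpha_j\, c_{n_j}(w_j) \ \lesssim \ \sup_j \alpha_j\, \sqrt{(1 + \log(N_j/n_j))/n_j},
\end{equation*}
where the last step is the Garnaev--Gluskin bound \cite{GG84} applied to each $c_{n_j}(id_{12}^{(N_j)})$. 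Setting $n_j = 0$ for $j \ge J/(2\alpha)$ with $J = \lceil \log_2 n \rceil$ already controls those summands by $\alpha_j \le n^{-1/2}$, while for each of the finitely many smaller $j$ I would take $n_j$ to be the smallest integer making the $j$-th summand $\lesssim (1 + \log n)\, n^{-1/2}$; a geometric summation yields $\sum_j n_j = O(\log n) \ll n$. The conclusion is $c_n(u_p) \lesssim (1 + \log n)\, n^{-1/2}$, which is stronger than the bound claimed in Part~(1) (because $n^{-1/2} \le n^{-1/q}$ when $p \le 2$) and matches Part~(2) exactly; Lemma~\ref{L:AbsGel} then gives $d_n^a(u_p) \le c_n(u_p)$.

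I expect the main obstacle to be the $c_n$ estimate, and the crucial design choice is to factor through $\ell_2$ rather than to use the formal identities $id_{1p}^{(N_j)}$ directly. A naive block-diagonal argument with $id_{1p}^{(N_j)}$ would introduce an awkward factor $N_j^{1/p-1/2}$ in the Gelfand estimate when $p \le 2$ (since $\|id_{2p}^{(N_j)}\| = N_j^{1/p-1/2}$), and the resulting codimension budget would close only for a restricted subrange of $\alpha$; routing through $\ell_2$ absorbs this factor into $C_p$ and reduces both parts of the statement to the single Kashin--Garnaev--Gluskin bound. The $d_n$ lower bound, and the Dvoretzky step needed for $p > 2$, are routine once the block structure is in place.
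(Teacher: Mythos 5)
Your architecture (block-diagonal sums of weighted copies of $id:\ell_1^{N_j}\to\ell_2^{N_j}$, pushed into $\ell_p$ via Euclidean sections) is genuinely different from the paper's, which takes the single diagonal operator $u_p e_j^{(1)} = j^{-\alpha}e_j^{(p)}$, gets the lower bounds by composing with the formal identities $id_{p2}$ (contractive for $p\le 2$) or $(id_{2p}^{2n})^{-1}$ (norm $(2n)^{1/2-1/p}$ for $p>2$) so as to land on the exactly computable diagonal widths $d_n(\ell_1\to\ell_2)$ of \cite[Theorem VI.2.7]{Pin85}, and gets the upper bounds from a single application of Garnaev--Gluskin on the initial segment of length $N=\lceil n^{1/(\alpha q)}\rceil$. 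Your upper-bound bookkeeping is essentially sound (though $\sum_j n_j$ is not $O(\log n)$: you must also kill all blocks with $N_j\lesssim n^{1/(1+2\alpha)}$ outright and respect $n_j\le N_j$, which costs $o(n)$ rather than $O(\log n)$ --- the budget still closes).

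The genuine gap is in the lower bound, at the words ``the $C_p$-isomorphism $T_{j_0}$ reduces this (up to the constant $C_p$) to $\alpha_{j_0}\,d_n(\ball(\ell_1^{N_{j_0}}),\ell_2^{N_{j_0}})$.'' An isomorphism \emph{onto its range} does not transfer Kolmogorov widths downward: to invoke Lemma~\ref{L:ops} you need a bounded left inverse $S:\ell_p^{M_{j_0}}\to\ell_2^{N_{j_0}}$ defined on the whole ambient block, i.e.\ you need $T_{j_0}(\ell_2^{N_{j_0}})$ to be $C_p$-complemented in $\ell_p^{M_{j_0}}$. That widths of a set can collapse when the ambient space is enlarged around an isomorphic copy is precisely the phenomenon this paper is about (Theorem~\ref{T:DepSpace}), so this step cannot be waved through. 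For $p>2$ it is repairable: $\ell_p^{M}$ has type $2$ with constant depending only on $p$, so Maurey's extension theorem extends $T_{j_0}^{-1}$ (or one uses Gaussian-spanned sections, which are $C_p$-complemented). But for $1<p<2$ --- the entire content of Part (1) apart from $p=2$ --- the proportional Kashin sections you propose are \emph{not} uniformly complemented in $\ell_p^{M}$ (this is the Bennett--Dor--Goodman--Johnson--Newman phenomenon; for $p=1$ a $2$-summing argument forces the projection norm to grow like $\sqrt{N}$), so the lower bound $d_n(u_p)\gtrsim n^{-\alpha}$ does not follow as written. You could salvage the construction by taking Rademacher-spanned blocks, i.e.\ $M_j=2^{N_j}$, where the Rademacher projection is bounded on $\ell_p^{M_j}$ with constant $C_p$; but as stated the proof of Part (1) is incomplete, and the complementation requirement should in any case be made explicit.
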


\begin{proof}
By Lemma \ref{L:AbsGel}, $d_n^a(u) \leq c_n(u)$ for any $n$, and
any operator $u$.

Throughout the proof, we denote by $(e_j^{(p)})_{j \in \N}$ the canonical
basis in $\ell_p$. The projection onto the first $N$ elements of this basis
is denoted by $P_N^{(p)}$. For $p \leq q$, $id_{pq}$ ($id_{pq}^N$) stands
for the formal identity from $\ell_p$ to $\ell_q$ (resp. from $\ell_p^N$
to $\ell_q^N$). We identify the range of $P_N^{(p)}$ with $\ell_p^N$.

In both (1) and (2), we consider a diagonal operator $u_p$, taking
$e_j^{(1)}$ to $j^{-\alpha} e_j^{(p)}$. We make repeated use of the
following formula: if $v = \diag(a_j)_{j=1}^\infty$ is a diagonal operator
from $\ell_1$ to $\ell_2$, then, by \cite[Theorem VI.2.7 on p. 207]{Pin85},
\begin{equation}
d_n(u) = \sup_{r > n} \sqrt{\frac{r-n}{\sum_{j=1}^r u_j^{-2}} } .
\label{eq:diag}
\end{equation}

(1) $1 < p \leq 2$.
To estimate $d_n(u_p)$, note that $id_{p2} u_p = u_2$, hence
$d_n(u_p) \geq d_n(u_2)$. By \eqref{eq:diag},
$d_n(u_2) \geq \gamma_\alpha n^{-\alpha}$.
Now let $N = \lceil n^{1/(\alpha q)} \rceil$.
By \cite{GG84},
$$
c_n(id_{1p}^N) \leq \frac{c_p}{\alpha q} \big( 1 + \log n \big)^{1/q} n^{-1/q} ,
$$
for some universal constant $c_p > 1$. Thus, there exists a subspace
$F \subset \span[e_j^{(1)} : 1 \leq j \leq N]$, so that
$$
\|id_{1p}|_F\| \leq \frac{c_p}{\alpha q} \big( 1 + \log n \big)^{1/q} n^{-1/q} .
$$
Denote by $v_p$ the diagonal
operator on $\ell_p^N$, mapping $e_j^{(p)}$ to $j^{-\alpha} e_j^{(p)}$, and
note that $u_p = v_p id_{1p}$. Therefore,
$$
\|u_p|_F\| \leq \frac{c_p}{\alpha q} \big( 1 + \log n \big)^{1/q} n^{-1/q} .
$$
Now let $G = \span[F, e_{N+1}^{(1)}, e_{N+2}^{(1)}, \ldots]$. Then
$\dim \ell_1/G \leq n$, and, by our choice of $N$,
$$
c_n(u_p) \leq \|u_p|_G\| \leq \frac{c_p}{\alpha q} \big( 1 + \log n \big)^{1/q} n^{-1/q} .
$$
As $c_n(u_p) \leq \|u_p|_G\|$, we are done.

(2) $2 \leq p < \infty$.
Note that $u_p = id_{2p} u_2$, and $id_{2p}$ is contractive.
Using the estimates for $c_n(u_2)$ obtained in Part (1), we get:
$$
c_n(u_p) \leq \|id_{2p}\| c_n(u_2) \leq
 \beta_{2\alpha} \big( 1 + \log n \big)^{1/2} n^{-1/2} .
$$
On the other hand, $d_n(u_p) \geq d_n(u_p P_{2n}^{(1)})$.
By \eqref{eq:diag}, $d_n(u_2 P_{2n}^{(1)}) \geq 2\gamma_\alpha n^{-1/\alpha}$,
for some constant $\gamma_\alpha$. Furthermore,
$(id_{2p}^{2n})^{-1} u_p P_{2n}^{(1)} = u_2 P_{2n}^{(1)}$,
hence
$$
d_n(u_p P_{2n}^{(1)}) \geq \|(id_{2p}^{2n})^{-1}\|^{-1} d_n(u_2 P_{2n}^{(1)}) \geq
(2n)^{-(1/2-1/p)} \cdot 2 \gamma_\alpha n^{-1/\alpha} \geq
\gamma_\alpha n^{1/p - 1/2 - \alpha} . \, \, \qedhere
$$
\end{proof}

\begin{problem}\label{P:to infty}
Which Banach spaces $\my$ contain a compact $K$ with the property that
$$
\lim \frac{d_n(K)}{d_n^a(K)} = \infty ?
$$
\end{problem}

By Proposition \ref{P:limit}, the answer is affirmative if $\my$ contains
a complemented copy of $\ell_p$, for some $p \in (1,\infty)$.
This occurs, for instance, for $\my = L_p(\mu)$.
Large classes of rearrangement invariant function spaces contain
complemented copies of $\ell_2$, see e.g.~\cite[Theorem
2.b.4]{LT79}.

\section{Restricted widths}\label{s:restrict}

The following problem was raised in \cite{Ost10}.

\begin{problem}[\cite{Ost10}]\label{P:RevKKM} Characterize compacts $K$ for which the
absolute widths do not differ much from their widths in
$\overline{\span[K]}$.
\end{problem}

The importance of this problem is illustrated by
Lemma \ref{L:CompComp} below.

It is worth mentioning that any Banach space $\my$ contains a compact
$K$ whose widths in $\overline{\span[K]}$ are the same as
the absolute widths. To construct an example, we use a technique
of Tikhomirov \cite{Tik60}.
Let $\{Z_n\}$ be a family of subspaces in a Banach space $\my$
satisfying $\dim Z_n=n$ and $Z_n\subset Z_{n+1}$, let $\ball_n$ be
their unit balls and let $\{t_n\}$ be a decreasing sequence of
positive numbers with $\lim_{n\to\infty}t_n=0$. Consider the
compact
$$
K=\overline{\conv \left(\cup_{n=1}^\infty t_n \ball_n\right)}. $$ 
Then
$d_n(K,\mx)=t_{n+1}$ for each $n\in\mathbb{N}$ and each Banach space
$\mx$ containing $\overline{\span[K]}$ as a subspace.
\medskip
The reasons: (1) Estimate from above: $K\subset Z_n+t_{n+1}\ball(\mx)$.
(2) Estimate from below: $K\supset t_{n+1}\ball_{n+1}$ and the result
of \cite{KKM48} saying that the maximal distance from a unit ball
of an $(n+1)$-dimensional subspace to an $n$-dimensional subspace
is equal to $1$.

There are other classes of $K$'s for which $d_n(K) = d_n^a(K)$ holds.
Suppose $1\le q\le p\le \infty$.
In \cite{Oik95} it was shown that the natural image of $\ball(\ell_p^m)$ in $\ell_q^m$
satisfies this. Furthermore \cite{Koc90}, $d_n(u)=d_n^a(u)$ if $u : \ell_p^m \to \ell_q^m$
is a diagonal map. Another example of a set $K$ with $d_n(K)=d_n^a(K)$ is provided below.

\begin{proposition}\label{P:uncond}
Suppose $F$ is an $m$-dimensional space with a $1$-unconditional basis $(f_i)_{i=1}^m$,
and $id : \ell_\infty^m \to F$ is the formal identity map, taking $\delta_i$ to $f_i$
for every $i$ (here, $(\delta_i)_{i=1}^m$ denotes the canonical basis for $\ell_\infty^m$).
Then $d_n(id) = d_n^a(id)$ for any $n$.
\end{proposition}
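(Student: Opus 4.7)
The plan is to prove both $d_n(id)$ and $d_n^a(id)$ equal the common value $\sigma := \min_{|S|=n} \|\sum_{i \notin S} f_i\|_F$, where the minimum runs over $n$-element subsets $S \subset \{1, \ldots, m\}$.

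The upper bound is straightforward. For any such $S$, take $Y_S = \operatorname{span}[f_i : i \in S]$. By $1$-unconditionality, the band projection $P_{S^c} : \sum a_j f_j \mapsto \sum_{j \notin S} a_j f_j$ is a contractive projection of $F$ onto $\operatorname{span}[f_i : i \notin S]$ with kernel $Y_S$. Hence, for $k = \sum a_j f_j \in K$ with $|a_j| \le 1$, $E(k, Y_S, F) = \|P_{S^c}(k)\|_F = \|\sum_{i \notin S} a_i f_i\|_F$, whose supremum over $K$ equals $\|\sum_{i \notin S} f_i\|_F$ (again by $1$-unconditionality, realized at any sign choice $a_i = \pm 1$). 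Minimizing over $S$ gives $d_n(id, F) \le \sigma$, and $d_n^a(id) \le d_n(id, F)$ is automatic.

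For the matching lower bound $d_n^a(id) \ge \sigma$, I would use Corollary~\ref{C:LinfAbs} to realize the absolute width inside the injective space $\tilde F = C(\ball(F^*))$, via the canonical isometric embedding $j(f)(\phi) = \phi(f)$. Hahn-Banach together with the central symmetry of $K$ gives the identity
\[
\sup_{k \in K} E(j(k), X_n, \tilde F) = \sup\Bigl\{\sum_{i=1}^m |\mu(j(f_i))| : \mu \in X_n^\perp \cap \ball(\tilde F^*)\Bigr\},
\]
where we used $\sup_{k \in K} \mu(j(k)) = \sum_i |\mu(j(f_i))|$. The task then reduces to exhibiting, for every $n$-dimensional $X_n \subset \tilde F$, a measure $\mu \in X_n^\perp \cap \ball(\tilde F^*)$ with $\sum_i |\mu(j(f_i))| \ge \sigma$.

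To construct such a $\mu$, I would exploit the rich multiplier structure on $\tilde F$: for each $\lambda \in [-1,1]^m$, the $1$-unconditionality of $(f_i^*)$ in $F^*$ induces a contractive operator $\tilde M_\lambda : \tilde F \to \tilde F$ given by $(\tilde M_\lambda g)(\phi) = g(M_\lambda^* \phi)$, which extends the multiplier $M_\lambda$ on $F$. Applying appropriate $\tilde M_\lambda$ to $X_n$, one can symmetrize and reduce to the case where $X_n$ is ``aligned'' with the coordinate directions; then $\mu$ may be taken as (a suitable perturbation of) the Dirac measure at a norming functional for $\sum_{i \notin S} f_i$, for an appropriately chosen $S$ with $|S| = n$. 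The main obstacle is the final compatibility check: while the multipliers $\tilde M_\lambda$ extend $M_\lambda$ isometrically and preserve $j(F)$, the generic $X_n \subset \tilde F$ need not respect this structure, so some care is required in extracting the coordinate $S$ from $X_n$ and verifying that the resulting $\mu$ indeed annihilates $X_n$ while keeping norm $\le 1$. This step parallels the symmetrization used in~\cite{Oik95} for $id : \ell_p^m \to \ell_q^m$ and is the technical heart of the proof.
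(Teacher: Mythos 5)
Your identification of the common value $\sigma=\min_{|S|=n}\bigl\|\sum_{i\notin S}f_i\bigr\|_F$ agrees with the constant $C$ in the paper, and your upper bound is correct and essentially the paper's own argument (the paper phrases it through the quotient map $q_G$ with $G=\span[f_i:i\in S]$ rather than through the complementary band projection, but by $1$-unconditionality these are the same computation). Your reduction of the lower bound, via Corollary~\ref{C:LinfAbs} and duality in the injective envelope, to producing $\mu\in X_n^\perp\cap\ball(\tilde F^*)$ with $\sum_i|\mu(j(f_i))|\ge\sigma$ for every $n$-dimensional $X_n$, is also sound.

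However, that lower bound is where the entire difficulty of the proposition lies, and your proposal does not prove it. The ``symmetrization'' step --- applying suitable multipliers $\tilde M_\lambda$ to $X_n$ so as to reduce to a coordinate-aligned subspace --- is precisely the missing argument, and as described it does not go through: for a general $1$-unconditional basis the only isometric multipliers are the sign changes $M_\epsilon$, $\epsilon\in\{-1,1\}^m$ (permutations need not be isometries of $F$), and neither averaging nor orbiting an arbitrary $n$-dimensional subspace $X_n\subset\tilde F$ under this finite group produces a subspace, let alone a coordinate one. A generic $X_n$ in the injective envelope bears no relation to the coordinate structure, so ``extracting the set $S$ from $X_n$'' and then checking that a perturbed Dirac measure annihilates $X_n$ with norm at most $1$ is exactly the content of the theorem; you acknowledge this yourself by calling it the technical heart and leaving it unverified. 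The paper closes this gap differently: it invokes \cite[Lemma 4]{Oik95} applied with $V$ equal to the unit cube, followed by a direct calculation. Without that lemma, or a complete substitute for it, your argument establishes only $d_n^a(id)\le d_n(id)\le\sigma$ and not the reverse inequality.
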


\begin{proof}
If $n \geq m$, we have $d_n(id) = d_n^a(id) = 0$. Now consider $n
\in \{1, \ldots, m-1\}$. Relabeling if necessary, we can assume
that $C = \|\sum_{i=1}^{m-n} f_i\|_F \leq \|\sum_{i \in
\mathcal{F}} f_i\|_F$ whenever $|{\mathcal{F}}| = m-n$. We claim
that $d_n(id) = d_n^a(id) = C$. First take $G = \span[f_i : m-n <
i \leq m]$, and let $q_G : F \to F/G$ be the quotient map. By the
$1$-unconditionality of $(f_i)$, $d_n(id) \leq \|q_G \circ id\| =
C$. For the opposite inequality, we apply \cite[Lemma 4]{Oik95} in
the situation where $V$ is the unit cube. A direct calculation
shows that $d_n^a(id) \geq C$.
\end{proof}

\section{Widths of images of compacts under compact operators}\label{s:image}

The purpose of this section is to make some comments on the
following intriguing problem

\begin{problem}\label{P:omalComp} Let $K$ be a compact in a Banach space $\mx$ and
$T:\mx\to\my$ be a compact operator. Does it follow that
$d_n(TK)=o(d_n(K))$?
\end{problem}

Set $\hat{d}_n(K) = d_n(K, \overline{\span[K]}$. \cite[Lemma 6.1]{OS09} states:

\begin{lemma}[\cite{OS09}]\label{L:CompComp}
Let $\mx$ and $\my$ be Banach spaces, $K$ be a compact set in
$\mx$ and $T:\mx\to\my$ be a compact operator. Then
$\hat{d}_n(TK)/\hat{d}_n(K)\to 0$ as $n\to\infty$.
\end{lemma}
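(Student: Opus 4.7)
The plan is to combine the compactness of $T$ with the compactness of $K$ to build small-dimensional approximating subspaces of $TK$ inside $\overline{\span[TK]}$. First I would restrict to $V := T|_{\overline{\span[K]}} : \overline{\span[K]} \to \overline{\span[TK]}$, which is compact. Since $V\ball(\overline{\span[K]})$ is a compact subset of $\overline{\span[TK]}$, its Kolmogorov widths there tend to zero: for each $\varepsilon > 0$ there exist $M = M(\varepsilon) \in \N$ and a subspace $L \subset \overline{\span[TK]}$ of dimension at most $M$ with $V\ball(\overline{\span[K]}) \subset L + \varepsilon\,\ball(\overline{\span[TK]})$.

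Next, for each $n$, pick a subspace $\mx_n \subset \overline{\span[K]}$ of dimension at most $n$ with $E(K,\mx_n) \leq 2\hat{d}_n(K)$. For $x \in K$, write $x = u + v$ where $u \in \mx_n$ and $\|v\| \leq 2\hat{d}_n(K)$. Then $Tx = Vu + Vv$ with $Vu \in V\mx_n$ and $Vv \in 2\hat{d}_n(K)\,\bigl(L + \varepsilon\,\ball(\overline{\span[TK]})\bigr)$, so $Tx$ lies within distance $2\varepsilon\,\hat{d}_n(K)$ of the subspace $V\mx_n + L \subset \overline{\span[TK]}$, which has dimension at most $n + M(\varepsilon)$. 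This yields the key inequality
\[
\hat{d}_{n+M(\varepsilon)}(TK) \;\leq\; 2\varepsilon\,\hat{d}_n(K) \qquad (n \in \N).
\]

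Finally, I would pass from this uniform inequality to the desired limit $\hat{d}_n(TK)/\hat{d}_n(K) \to 0$ by driving $\varepsilon \to 0$ together with $n \to \infty$. Setting $N = n + M(\varepsilon)$ yields $\hat{d}_N(TK)/\hat{d}_N(K) \leq 2\varepsilon\,\hat{d}_{N-M(\varepsilon)}(K)/\hat{d}_N(K)$. The hardest step will be controlling the ratio $\hat{d}_{N-M(\varepsilon)}(K)/\hat{d}_N(K)$, which can a priori blow up when the widths of $K$ decay very rapidly. To handle this I would let $\varepsilon = \varepsilon_n$ depend on $n$ in such a way that $M(\varepsilon_n) = o(n)$ (possible since $M$ depends on $V$ alone), and refine the basic decomposition by applying it to the compact residual $\{x - u_x : x \in K\}$ (via a measurable, or where available continuous, selection of nearest points $u_x \in \mx_n$): the image of this residual under $V$ is itself a compact subset of $\overline{\span[TK]}$ whose widths decay strictly faster than the naive estimate $2\hat{d}_n(K) \cdot d_\cdot(V)$, and iterating the decomposition then closes the gap.
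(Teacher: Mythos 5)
The paper does not actually prove Lemma \ref{L:CompComp} --- it is quoted from \cite{OS09} --- so there is no in-paper argument to compare yours against; I therefore judge the proposal on its own terms. The first half is correct: the decomposition $Tx = Vu + Vv$ with $u \in \mx_n$ and $\|v\| \leq 2\hat{d}_n(K)$ does yield $\hat{d}_{n+M(\ep)}(TK) \leq 2\ep\,\hat{d}_n(K)$, but this is in substance the shifted-index statement of Proposition \ref{P:o_comp} and falls short of the lemma. The step you yourself flag as ``the hardest'' is a genuine gap, and neither of your suggested fixes addresses it. Choosing $\ep_n$ with $M(\ep_n) = o(n)$ is irrelevant: the ratio $\hat{d}_{N-M}(K)/\hat{d}_N(K)$ can be unbounded already for $M=1$ (take $\hat{d}_n(K) = 2^{-2^n}$). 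More decisively, the entire trade-off available in your framework amounts to the bound
$$
\hat{d}_N(TK) \ \lesssim\ \min_{0 \leq j \leq N} d_j(V)\,\hat{d}_{N-j}(K),
$$
and this minimum need not be $o(\hat{d}_N(K))$: for $\mx=\my=\ell_2$, $T=\diag(2^{-j^2})$ and $K$ the ellipsoid with semiaxes $2^{-j^2}$, one has $d_j(V)\hat{d}_{N-j}(K) = 2^{-(j+1)^2-(N-j+1)^2} \geq \hat{d}_N(K)/2$ for every $j$, while the true ratio $\hat{d}_N(TK)/\hat{d}_N(K) = 2^{-(N+1)^2}$. So no choice of $\ep_n$ (equivalently, of the splitting $j$) can close the gap; genuinely new information about the residuals is indispensable.

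Your second suggestion --- iterating on the residual set $\{x-u_x : x \in K\}$ --- is the right instinct but is not an argument. In a Hilbert space the residuals lie in $F_n^{\perp}$, a subspace of codimension $n$ with a norm-one projection, and $\|V|_{F_n^{\perp}}\| \to 0$ by compactness; that is exactly why the Hilbert case is easy. In a general Banach space nearest-point residuals do not lie in any fixed subspace of codimension $n$, and your claim that their image under $V$ has ``widths decaying strictly faster than the naive estimate'' is asserted without a mechanism; iterating the same decomposition only compounds the dimension shift. Note finally that nothing in your argument uses the restriction to $\overline{\span[K]}$ and $\overline{\span[TK]}$ in an essential way: run verbatim in the ambient spaces it would prove $d_n(TK,\my)=o(d_n(K,\mx))$, which is precisely the paper's open Problem \ref{P:omalComp}. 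That is strong evidence that the missing step must exploit the definition of $\hat{d}_n$, which your proposal never does.
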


For Hilbert spaces $\hat{d}_n(K)=d_n(K)$ and so the result of
Lemma \ref{L:CompComp} remains true if we replace $\hat{d}_n$ by
$d_n$. Problem \ref{P:omalComp} asks whether one can generalize
this result to the Banach space case. Of course, Problem
\ref{P:omalComp} would be solved if one would prove that $\hat
d_n(K)\le Cd_n(K)$ for some absolute constant $C$. However, as we
know, for example, from Theorem \ref{T:DepSpace} this turned out
not to be the case.\medskip

If a compact $K$ is such that $\{d_n(K)\}$ decreases more slowly
than a geometric progression, then $d_n(TK) =o(d_n(K))$. More
precisely:

\begin{proposition}\label{P:slow_decay}
Suppose a compact $K \subset \mx$ and $C\in(1,\infty)$
have the following property:
for any $k\in\mathbb{N}$ there exists $N\in\mathbb{N}$ such that
$d_n(K)/d_{n+k}(K)<C$ for each $n\ge N$. Then $d_n(TK) =o(d_n(K))$
for each compact operator $T:\mx\to\my$.
\end{proposition}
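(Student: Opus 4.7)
The plan is to bypass Lemma~\ref{L:CompComp}, whose ratio bound is stated for $\hat{d}_n$ rather than $d_n$; since $\hat{d}_n(K)$ can exceed $d_n(K,\mx)$ by an unbounded factor, that route does not immediately yield the desired $o$-estimate. Instead I combine a finite-dimensional approximation of $T(\ball(\mx))$ (available because $T$ is compact) with the standard subadditivity for Kolmogorov widths, and then use the slow decay hypothesis to absorb the resulting index shift.

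Fix $\delta > 0$. Since $T$ is compact, $T(\ball(\mx))$ is totally bounded in $\my$, so we may pick a $\delta$-net of cardinality $N = N(\delta)$ in it, and let $L \subset \my$ be its linear span, so that $\dim L \le N$ and $T(\ball(\mx)) \subset L + \delta\, \ball(\my)$. For any $n \in \N$ and $\eta > d_n(K,\mx)$, pick an $n$-dimensional $F_n \subset \mx$ with $K \subset F_n + \eta\, \ball(\mx)$. Then
\[
T(K) \subset T(F_n) + \eta\, T(\ball(\mx)) \subset \bigl(T(F_n)+L\bigr) + \eta\delta\, \ball(\my),
\]
and since $\dim(T(F_n)+L) \le n+N$, sending $\eta \downarrow d_n(K,\mx)$ gives $d_{n+N}(T(K),\my) \le \delta\, d_n(K,\mx)$.

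Now invoke the slow decay hypothesis with $k = N$: there exists $M$ such that $d_n(K,\mx) \le C\, d_{n+N}(K,\mx)$ for all $n \ge M$. Substituting and writing $m = n+N$,
\[
\frac{d_m(T(K),\my)}{d_m(K,\mx)} \le C\delta \qquad (m \ge M + N).
\]
Since $\delta > 0$ was arbitrary, $\limsup_m d_m(T(K))/d_m(K) \le C\delta$ for every $\delta$, so this limsup is zero and $d_n(T(K)) = o(d_n(K))$.

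The main subtlety is that the threshold $M+N$ depends on $\delta$ and typically grows without bound as $\delta \to 0$; this is exactly why slow decay must be quantified over all $k$ rather than a single $k$. The compactness of $T$ enters exactly once, in producing the subspace $L$ in the first step; the rest is a direct consequence of the definitions of Kolmogorov widths and the hypothesis on $\{d_n(K)\}$.
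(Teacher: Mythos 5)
Your proof is correct and follows essentially the same route as the paper's: both use compactness of $T$ to produce a finite-dimensional subspace $L$ (the paper's $\my_k$) with $T(\ball(\mx))\subset L+\delta\,\ball(\my)$, combine it with an $n$-dimensional almost-optimal subspace for $K$ to get $d_{n+N}(TK)\le\delta\,d_n(K)$, and then invoke the hypothesis with $k=N$ to absorb the index shift. The only differences are cosmetic (order of the two steps and the limiting argument in $\eta$ versus the paper's direct use of $Cd_m(K)$ as the covering radius).
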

\begin{proof} It suffices to show that for each $\delta>0$ there exists
$M \in \mathbb{N}$ such that $d_m(TK)\le C\delta d_m(K)$ for each
$m\ge M$. To show this we observe that for each $\delta>0$ there
exists $k \in \mathbb{N}$ and a $k$-dimensional subspace
$\my_k\subset \my$ such that
\begin{equation}\label{E:myk}
T \ball(\mx)\subset\my_k+\delta \ball(\my).
\end{equation}
By the assumption there exists $N$ such that $d_{n}(K)<
Cd_{n+k}(K)$ for each $n\ge N$. Let $M\ge N+k$ and $m\ge M$. Then
$d_{m-k}(K)< Cd_m(K)$ and therefore there is an
$(m-k)$-dimensional subspace $\mx_{m-k}\subset\mx$ such that
\[
K\subset \mx_{m-k}+Cd_m(K)\ball(\mx).
\]
Combining with \eqref{E:myk} we get
\[
TK\subset T\mx_{m-k}+Cd_m(K)T\ball(\mx)
 \subset T\mx_{m-k}+\my_k+C\delta d_m(K)\ball(\my).
\]
The subspace $T\mx_{m-k}+\my_k$ is at most $m$-dimensional,
therefore $d_m(TK)\le C\delta d_m(K)$.
\end{proof}
\begin{proposition}\label{P:o_comp}
Let $K$ be a compact subset of a Banach space $X$, and $T : X \to
Y$ be a compact operator. Let $\phi : \N \to \N$ be a function,
satisfying $\lim_n (\phi(n) - n) = + \infty$. Then
$d_{\phi(n)}(TK)=o(d_n(K))$.
\end{proposition}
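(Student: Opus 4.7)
The plan is to combine the compactness of the operator $T$ with the definition of the Kolmogorov widths of $K$ to produce an approximating subspace for $TK$ that uses only slightly more dimensions than the optimal one for $K$. The key observation is essentially a strengthening of the idea already used in the proof of Proposition \ref{P:slow_decay}.

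First I would fix an arbitrary $\delta > 0$. Since $T$ is compact, $\overline{T(\ball(\mx))}$ is totally bounded, so there exists a finite-dimensional subspace $\my_k \subset \my$, say of dimension $k = k(\delta)$, such that
$$
T(\ball(\mx)) \subset \my_k + \delta \ball(\my).
$$
Next, for any $\vr > 0$, the definition of $d_n(K)$ furnishes an $n$-dimensional subspace $\mx_n \subset \mx$ with $K \subset \mx_n + (d_n(K) + \vr)\ball(\mx)$. Applying $T$ and using that $\my_k$ is a subspace, I obtain
$$
TK \subset T(\mx_n) + (d_n(K) + \vr)\bigl(\my_k + \delta \ball(\my)\bigr)
= T(\mx_n) + \my_k + \delta (d_n(K) + \vr)\ball(\my).
$$
The subspace $T(\mx_n) + \my_k$ has dimension at most $n + k$, so $d_{n+k}(TK) \le \delta (d_n(K) + \vr)$. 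Letting $\vr \to 0$ gives
$$
d_{n+k}(TK) \le \delta\, d_n(K).
$$

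Finally, the hypothesis $\phi(n) - n \to \infty$ means there is $N = N(\delta)$ with $\phi(n) \ge n + k$ for all $n \ge N$. Since the sequence of Kolmogorov widths is non-increasing in the index, this yields $d_{\phi(n)}(TK) \le d_{n+k}(TK) \le \delta\, d_n(K)$ for all $n \ge N$. Since $\delta > 0$ was arbitrary, this proves $d_{\phi(n)}(TK) = o(d_n(K))$ in the case $d_n(K) > 0$ for all $n$; and if $d_n(K) = 0$ for some $n$, then $K$ lies in a finite-dimensional subspace, $TK$ does too, and $d_{\phi(n)}(TK) = 0$ for all sufficiently large $n$.

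There is no real obstacle here: the argument is a short, direct adaptation of the method of Proposition \ref{P:slow_decay}, where the rigid decay hypothesis on $\{d_n(K)\}$ has been traded for the flexibility of shifting the index by any unboundedly growing amount. The only care needed is the routine remark about the case $d_n(K) = 0$, which is trivially handled as above.
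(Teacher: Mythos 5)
Your proof is correct, but it takes a genuinely different route from the paper's. The paper reduces to separable $\mx$ via Lemma \ref{L:in_separable}, enlarges $K$ to an auxiliary compact $\tilde{K}$ obtained by adjoining suitably scaled copies $d_{\psi(i)}(K)x_i$ of a dense sequence in the unit sphere (so that $\overline{\span[\tilde{K}]}=\mx$ while $d_{\phi(n)}(\tilde{K})\le d_n(K)$), and then invokes Lemma \ref{L:CompComp} for $\hat{d}_n$. You instead argue directly: the finite-rank-plus-$\delta$ decomposition $T\ball(\mx)\subset \my_{k(\delta)}+\delta\ball(\my)$, combined with a near-optimal $n$-dimensional subspace for $K$, gives the clean quantitative estimate $d_{n+k(\delta)}(TK)\le\delta\, d_n(K)$ for every $n$, and the hypothesis $\phi(n)-n\to\infty$ together with monotonicity of the widths in the index finishes the argument. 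This is exactly the mechanism of the paper's Proposition \ref{P:slow_decay}, with the index shift absorbing the extra $k(\delta)$ dimensions in place of the slow-decay hypothesis. Your version is shorter and self-contained: it avoids both the separability reduction and the external Lemma \ref{L:CompComp}, and it makes transparent why the condition $\lim_n(\phi(n)-n)=+\infty$ is precisely what is needed. What the paper's construction buys is a reduction of the shifted-index statement to the known equal-index statement for $\hat{d}_n$, illustrating the ``enlarge $K$ until its span is dense'' technique that is also discussed at the end of Section \ref{s:image} in connection with Problem \ref{P:omalComp}. Your handling of the degenerate case $d_n(K)=0$ is at the same level of rigor as the paper's (which simply calls that case immediate) and is fine.
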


\begin{lemma}\label{L:in_separable}
Suppose $K$ is a compact subset of a Banach space $\mx$, and
$(\delta_n)$ is a sequence of positive numbers. Then $\mx$ contains
a separable subspace $\tilde{\mx}$ such that, for every $n \in \N$,
$d_n(K, \tilde{\mx}) \leq (1 + \delta_n) d_n(K,\mx)$.
\end{lemma}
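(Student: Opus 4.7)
The plan is to construct $\tilde{\mx}$ as the closed linear span of $K$ together with a countable family of finite-dimensional subspaces that witness the Kolmogorov widths to the required precision. Concretely, for each $n \in \N$, the definition of $d_n(K,\mx)$ as an infimum provides a subspace $X_n \subset \mx$ with $\dim X_n \leq n$ and
$$
\sup_{x \in K} E(x, X_n) \leq (1 + \delta_n)\, d_n(K, \mx) .
$$
In the degenerate case $d_n(K,\mx) = 0$, one instead picks a whole sequence $(X_n^{(k)})_{k \in \N}$ of subspaces of dimension $\leq n$ with $\sup_{x \in K} E(x, X_n^{(k)}) \to 0$ and includes all of them in the construction below; this only affects the bookkeeping.

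Next, I would set
$$
\tilde{\mx} = \overline{\mathrm{span}}\Big( K \cup \bigcup_{n \in \N} X_n \Big) \subset \mx .
$$
This is separable: $K$ is compact in a metric space and hence separable, each $X_n$ is finite-dimensional and thus separable, and a countable union of separable sets is separable; rational linear combinations of a countable dense subset give a countable dense subset of $\mathrm{span}(K \cup \bigcup_n X_n)$, and closure preserves separability.

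Finally, for each $n$, the subspace $X_n$ lies inside $\tilde{\mx}$ and has dimension at most $n$, and the norm inherited on $\tilde{\mx}$ is exactly the restriction of the norm of $\mx$, so the distance $E(x, X_n)$ computed in $\tilde{\mx}$ agrees with the one computed in $\mx$. Consequently
$$
d_n(K, \tilde{\mx}) \leq \sup_{x \in K} E(x, X_n) \leq (1 + \delta_n)\, d_n(K, \mx),
$$
as required. There is essentially no obstacle here; the only subtlety is the handling of indices with $d_n(K,\mx) = 0$, dispatched by letting $k \to \infty$ in the variant construction above. It is worth noting that the argument in fact yields the stronger equality $d_n(K, \tilde{\mx}) = d_n(K,\mx)$ after taking $\delta_n \to 0$, since $\tilde{\mx} \subset \mx$ forces the reverse inequality automatically, but the weaker form stated is all that is needed in the sequel.
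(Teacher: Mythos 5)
Your proof is correct and follows essentially the same route as the paper's: choose for each $n$ a subspace of dimension at most $n$ nearly realizing $d_n(K,\mx)$, and let $\tilde{\mx}$ be the closed span of $K$ together with all these subspaces. Your extra handling of the degenerate case $d_n(K,\mx)=0$ (where the infimum need not be attained) is a small point of care that the paper's proof glosses over, but otherwise the arguments coincide.
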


\begin{proof}
For each $n \in \N$ find an $n$-dimensional subspace $Z_n \subset
\mx$ such that $E(K,Z_n) \leq (1 + \delta_n) d_n(K,\mx)$. We can
take $\tilde{\mx}$ to be the closure of $\span[K, Z_1, Z_2,
\ldots]$ in $\mx$.
\end{proof}

\begin{proof}[Proof of Proposition~\ref{P:o_comp}]
By Lemma~\ref{L:in_separable}, we can assume that $\mx$ is
separable. Furthermore, we assume that $d_n(K) > 0$ for every $n$
(otherwise, the conclusion of the proposition is immediate). Let
$(x_i)_{i=1}^\infty$ be a countable dense subset of the unit
sphere of $X$. For $n \in \N$, let $\psi(n)$ be the smallest
positive integer $m$ with the property that $\phi(k) - k \geq n$
for any $k \geq m$. Let $\tilde{K}$ be the closed convex hull of
the union of $K$ and the sequence $(d_{\psi(i)}(K) x_i)$. Then
$d_{\phi(n)}(\tilde{K}) \leq d_n(K)$. Indeed, fix $c > 1$, and
find an $n$-dimensional subspace $Z$ in $\mx$, such that $E(K,Z) <
c d_n(K)$. Let $\tilde{Z}$ be the linear span of $Z$, and of $x_1,
\ldots, x_{\phi(n) - n}$. Then $\dim \tilde{Z} \leq \phi(n)$, and
$E(\tilde{K}, \tilde{Z}) \leq c d_n(K)$. As $c > 1$ is arbitrary,
we conclude that $d_{\phi(n)}(\tilde{K}) \leq d_n(K)$. We conclude
the proof by applying Lemma~\ref{L:CompComp} to $\tilde{K}$.
\end{proof}

It may be tempting to approach Problem \ref{P:omalComp} by
fixing $C_1 > C > 1$, finding subspaces $Z_n \hookrightarrow \mx$
such that $E(K, Z_n) \leq C d_n(K)$ and $\dim Z_n = n$, and then
considering $\tilde{K} = \cap_n (Z_n + C_1 d_n(K) \ball(\mx))$ as
a subset of $\tilde{\mx} = \overline{\span[Z_n : n \in \N]} \subset \mx$.
Then $K \subset \tilde{K}$, and $d_n(\tilde{K}, \tilde{\mx}) \leq C_1 d_n(K,\mx)$.
If we had $\tilde{\mx} = \overline{\span[\tilde{K}]}$, we would then
use Lemma \ref{L:CompComp} to conclude that
$$
\frac{d_n(T \tilde{K})}{\hat{d}_n(K)} \leq
\frac{d_n(TK)}{\hat{d}_n(K)} {\underset{n \to \infty}{\longrightarrow}} 0
$$
However, the above construction may lead to
$\overline{\span[\tilde{K}]}$ being a strict subset of
$\tilde{\mx}$, as the following example shows.
Let $\mx = \ell_2$, 
and take $K$ to be the set of all
$(x_i) \in \ell_2$ s.t. $x_1 = 0$, and
$|x_2|^2 + \sum_{i=3}^\infty 4^{3-i} |x_i|^2 \leq 1$.
By \cite{Pie87}, $d_1(K) = 1$, and $d_n(K) = 2^{2-n}$ for $n \geq 2$.
Take $Z_1 = \span[e_1]$, and $Z_n = \span[e_3, \ldots, e_{n+1}]$
for $n \geq 2$. Then $E(K,Z_n) = d_n(K)$ for any $n$.
However, $Z_1 \cap \span[\tilde{K}] = \{0\}$. Indeed,
denote by $P$ the orthogonal projection onto $\span[e_1]$.
Then, for $n \geq 2$ and $x \in Z_n + C_1 d_n(K) \ball(\mx)$,
$\|P x\| \leq 2^{n-2} C_1$. Consequently, for $x \in \tilde{K}$,
we have $P x = 0$. In other words, $\tilde{K} \subset Z_1^\perp$.

\renewcommand{\refname}{\section{References}}

\end{large}

\end{document}